\newcommand{\dist}{\text{dist}} %distance
\renewcommand{\div}{\mbox{div}\,}
\renewcommand{\div}{\mbox{div}\,}
\newcommand{\R}{{\mathbb R}} \newcommand{\K}{{\mathbb K}}
\newcommand{\G}{{\mathbb G}}
\newcommand{\Q}{{\mathbb Q}}
\newcommand{\N}{{\mathbb N}}
\newcommand{\e}{\varepsilon} 
\newcommand{\F}{\mathbf{F}}
\newcommand{\f}{\mathbf{f}}
\newcommand{\bzeta}{\mathbf{\zeta}}
\newcommand{\A}{\mathbf{A}}
\newcommand{\ba}{\mathbf{a}}
\newcommand{\M}{{\mathcal M}}
\def\longequals{\mathbin{=\kern-2pt=}}
\def\eqdef{\mathbin{\buildrel \rm def \over \longequals}}
\newtheorem{theorem}{Theorem}[section]
\newtheorem{corollary}[theorem]{Corollary}
\newtheorem{definition}[theorem]{Definition}
\newtheorem{remark}[theorem]{Remark}
\newtheorem{lemma}[theorem]{Lemma}
\newtheorem{proposition}[theorem]{Proposition}
\numberwithin{equation}{section}
\newcommand{\beq}{\begin{equation}}
\newcommand{\eeq}{\end{equation}}
\definecolor{darkred}{rgb}{.70,.12,.20}
\definecolor{darkgreen}{rgb}{.20,.52,.14}
\begin{document}
\title[Gradient estimates for  quasilinear elliptic equations]{Interior gradient estimates for  quasilinear elliptic equations
}

%\author[L. Hoang]{Luan %Hoang$^{\dag}$}
%\thanks{Luan Hoang %gratefully acknowledges %the support provided by  %NSF grant DMS-1412796.}
%\address{$^\dag$ %Department of %Mathematics and %Statistics, Texas Tech %University, Box 41042, %Lubbock, TX %79409--1042, U.S.A.}
%\email{luan.hoang@ttu.edu}

\author[T. Nguyen]{Truyen  Nguyen$^\ddag$} 
\address{$^\ddag$Department of Mathematics, University of Akron, 302 Buchtel Common, Akron, OH 44325--4002, U.S.A}
\email{tnguyen@uakron.edu}

\author[T. Phan]{Tuoc  Phan$^{\dag}$}
\address{$^{\dag\dag}$ Department of Mathematics, University of Tennessee, Knoxville, 227 Ayress Hall, 1403 Circle Drive, Knoxville, TN 37996, U.S.A. }
\email{phan@math.utk.edu}
%\thanks{$^\dag$Corresponding author}

%\date{\today}

\begin{abstract} 
We study quasilinear  elliptic equations of the form $\div \A(x,u,\nabla u) = \div \F $ in  bounded domains in $\R^n$, $n\geq 1$. The vector field $\A$ is 
allowed to be discontinuous in $x$, Lipschitz continuous in $u$ and its  growth in the gradient variable  is  like the $p$-Laplace operator with $1<p<\infty$. We establish interior 
$W^{1,q}$-estimates for locally bounded weak solutions to the  equations for every $q>p$, and we show that similar results also hold true in the setting of {\it Orlicz} spaces. Our regularity 
estimates extend results which are only known  for the case  $\A$ is independent of $u$ and they   complement  the well-known interior $C^{1,\alpha}$- estimates obtained by DiBenedetto \cite{D} and Tolksdorf \cite{T}
for general quasilinear elliptic equations.
\end{abstract}

\maketitle

%\tableofcontents
\setcounter{equation}{0}

\section{Introduction}

We will investigate interior regularity for weak solutions to degenerate quasilinear elliptic equations of the form 
\begin{equation}\label{GE}
\div \A(x, u, \nabla u)=  \div \F  \quad \mbox{in}\quad \Omega,
\end{equation}
where $\Omega$ is a bounded domain in $\R^n$, $n\geq 1$. Without loss of generality we take $\Omega$ to be the Euclidean ball $B_6  := \{x\in \R^n:\, | x| <6\}$. 
Let  $\K\subset \R$ be an open interval and consider general vector field
\[
\A = \A(x,z,\xi) : B_6\times \overline\K\times \R^n \longrightarrow \R^n
\]
which is  a Carath\'eodory map, that is, $\A(x,z,\xi)$ is measurable in $x$ for every $(z,\xi)\in \overline \K \times \R^n$ and continuous in $(z,\xi)$ for a.e. $x\in B_6$. We assume that there exist constants  $\Lambda>0$ and  $1< p<\infty$ such that $\A$  satisfies the following  structural conditions for a.e. $x\in B_6$:
\begin{align}
&\big\langle  \A(x,z,\xi) -\A(x,z,\eta), \xi-\eta\big\rangle \geq \Lambda^{-1} \big(|\xi| +|\eta|)^{p-2} |\xi-\eta|^2\,\,\, \forall z\in \overline\K\mbox{ and } \forall \xi,\eta\in\R^n,\label{structural-reference-1}\\ 
& |\A(x,z,\xi)|  \leq \Lambda |\xi|^{p-1}\qquad\qquad\qquad\qquad\qquad\qquad\qquad\,\, \forall (z,\xi)\in \overline\K\times \R^n,\label{structural-reference-2}\\
& |\A(x,z_1,\xi)-\A(x,z_2,\xi)|  \leq \Lambda |\xi|^{p-1} |z_1 - z_2| \qquad\qquad\qquad \forall z_1, z_2\in \overline\K\mbox{ and } \forall \xi\in \R^n. \label{structural-reference-3}
\end{align}
 We want to emphasize  that  \eqref{structural-reference-1}--\eqref{structural-reference-3} are required to hold only for $z\in \overline \K$. This is useful   since in  some applications,  \eqref{structural-reference-1}--\eqref{structural-reference-3}  are satisfied only when $\K$ is a strict subset of $\R$ (see \cite{HNP1} for such an example where $\K=(0, M_0)$ for some constant $M_0>0$).

 The class of equations of the form \eqref{GE} with $\A$ satisfying \eqref{structural-reference-1}--\eqref{structural-reference-3} contains  the  well-known $p$-Laplace equations. The interior $C^{1,\alpha}$ regularity for  homogeneous $p$-Laplace equations was established by Ural\'tceva \cite{Ur}, Uhlenbeck \cite{Uh}, Evans \cite{E1} and Lewis \cite{Le}, while interior $W^{1,q}$-estimates for nonhomogeneous $p$-Laplace equations were obtained by Iwaniec \cite{I} and  DiBenedetto and Manfredi \cite{DM}. 
More generally,  \eqref{GE}   includes  equations of the type
  \begin{equation}\label{like-p-Laplacian}
\div \A(x,  \nabla u)=  \div \F  \quad \mbox{in}\quad \Omega
\end{equation}
whose $W^{1,q}$ regularity has been studied  by several authors when $\A$ is not necessarily continuous in the $x$ variable \cite{BW, BWZ, CP, Di, DM2, DM3, I, KZ,
 MPS, MP, P}. 

In this paper we study  general quasilinear equations 
\eqref{GE} when the principal parts also depend on the $z$ variable.  In the case $\A$ is  Lipschitz continuous in both $x$ and $z$ variables, the interior $C^{1,\alpha}$ regularity for locally bounded weak solutions to the corresponding homogeneous equations was established by DiBenedetto \cite{D} and Tolksdorf \cite{T} (see also \cite{Li} and the books \cite{GT,LU,MZ} for further  results). When $\A$ is  discontinuous in $x$, one does not expect  H\"older estimates for gradients of weak solutions and it is natural to search for $L^q$- estimates for the gradients instead. However, this type of  estimates for solutions to  \eqref{GE} is not well understood  even if $\F =0$. 
%The same regularity problem also presents for %solutions to the nonhomogeneous equations of the %simpler form $\div \A( u, \nabla u)=   \f$ when the %integrability of $\f$ is not  high enough.
 Our main purpose of the current work  is to address this issue by establishing $W^{1,q}$-estimates for locally bounded weak solutions to the nonhomogeneous equation \eqref{GE} when $\A$ is not necessarily continuous in the $x$ variable and $ \F$ belongs to the $L^q$ space. We achieve this in  Theorem~\ref{main-result} 
 % stated in %Section~\ref{sec:main-
 %result} 
 whose  particular consequence gives the following result:  

\begin{theorem}\label{simplest-main-result} Let $\K\subset \R$ be an open interval and $M_0>0$. Let $\A: B_6\times \overline\K\times \R^n \longrightarrow \R^n$ be a  Carath\'eodory map  such that $(z,\xi) \mapsto\A(x,z,\xi)$ is differentiable on $  \K\times (\mathbb{R}^n\setminus \{0\})$ for a.e. $x\in B_6$. Assume   that $\A$ satisfies 
\eqref{structural-reference-1}--\eqref{structural-reference-3} and the following   conditions for a.e. $x\in B_6$ and for all $z\in \overline{\K}$:
\begin{align*}
&\langle \partial_\xi \A(x,z,\xi) \eta, \eta\rangle \geq \Lambda^{-1} |\xi|^{p-2} |\eta|^2\, \quad \forall \xi\in \R^n\setminus \{0\} \mbox{ and }\forall \eta\in\R^n,\\ 
&   | \partial_\xi \A(x,z,\xi) |  \leq \Lambda |\xi|^{p-2}\qquad\qquad\quad \forall \xi \in \R^n\setminus \{0\}.
\end{align*}
Then for any $q>p$, there exists a constant  $\delta=\delta(p,q,n ,\Lambda,  \K, M_0)>0$  such that: if 
\begin{equation}\label{smallness-BMO}
\sup_{0<\rho\leq 3}\sup_{y\in B_1} \fint_{B_\rho(y)} \Big[
\sup_{z\in \overline\K}\sup_{\xi\neq 0}\frac{|\A(x,z,\xi) - \A_{B_\rho(y)}(z,\xi)|}{|\xi|^{p-1}}
\Big] \,dx \leq \delta,
\end{equation}
and $u$  is a weak solution of 
\[\div \A(x, u, \nabla u)=  \div \F\quad \mbox{in}\quad B_6
\]
satisfying $\|u\|_{L^\infty(B_5)}\leq M_0$, we have
\begin{equation*}
\|\nabla u\|_{L^q(B_1)} \leq C\left(
\|u\|_{L^p(B_6)} + \||\F|^{\frac{1}{p-1}}\|_{L^q(B_6)} \right).
\end{equation*}
Here $\A_{B_\rho(y)}(z,\xi) := \fint_{B_\rho(y)} \A(x,z,\xi)\, dx$ and  $C$ is a constant 
 depending only on  $q$, $p$, $n$,  $\Lambda$,  $\K$ and $M_0$.
\end{theorem}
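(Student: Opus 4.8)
The plan is to derive this result as a consequence of the more general Theorem~\ref{main-result}, so the task reduces to verifying that the hypotheses here imply the (presumably more technical) hypotheses of that theorem. The differentiability assumptions on $\A$ in $\xi$, namely the ellipticity $\langle \partial_\xi \A(x,z,\xi)\eta,\eta\rangle \geq \Lambda^{-1}|\xi|^{p-2}|\eta|^2$ and the bound $|\partial_\xi\A(x,z,\xi)|\leq \Lambda|\xi|^{p-2}$, are the standard conditions that, upon integrating along the segment joining $\xi$ and $\eta$, yield the monotonicity \eqref{structural-reference-1} and the growth bound \eqref{structural-reference-2}; so those are already built in as explicit hypotheses and nothing new is needed there. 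The key point I would emphasize is that the pointwise smallness of the oscillation in \eqref{smallness-BMO} is exactly a small-BMO-type (small-$L^1$ oscillation) condition on the normalized vector field $\A(x,z,\xi)/|\xi|^{p-1}$, uniform in $(z,\xi)$; this is the hypothesis under which the general theorem produces, for each $q>p$, a threshold $\delta=\delta(p,q,n,\Lambda,\K,M_0)$.

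\medskip

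First I would recall the definition of weak solution and fix the setup: $u\in W^{1,p}_{\mathrm{loc}}(B_6)$ with $\|u\|_{L^\infty(B_5)}\leq M_0$ solving $\div\A(x,u,\nabla u)=\div\F$ in the distributional sense on $B_6$. Next I would observe that the bound $\|u\|_{L^\infty(B_5)}\leq M_0$ together with \eqref{structural-reference-3} means that along the solution the frozen vector field $x\mapsto \A(x,u(x),\xi)$ differs from $\A(x,z,\xi)$ only by a quantity controlled by $\Lambda|\xi|^{p-1}|u(x)-z|$; this is what lets one treat the $z$-dependence as a perturbation and reduce, at the level of comparison estimates, to the $u$-independent equation \eqref{like-p-Laplacian} for which the conclusion is known. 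Then I would invoke Theorem~\ref{main-result} with the open interval $\K$, the constant $M_0$, the structural constant $\Lambda$, the same exponents $p$ and $q$, and the smallness threshold it provides; this delivers the estimate $\|\nabla u\|_{L^q(B_1)}\leq C(\|\nabla u\|_{L^p(B_2)}+\||\F|^{1/(p-1)}\|_{L^q(B_2)})$ on interior balls, with $C=C(q,p,n,\Lambda,\K,M_0)$.

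\medskip

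Finally I would upgrade the right-hand side from $\|\nabla u\|_{L^p}$ on an interior ball to $\|u\|_{L^p(B_6)}$. This is a standard Caccioppoli / energy estimate: testing the equation with $u\varphi^p$ for a cutoff $\varphi$ supported in $B_6$ and equal to $1$ on a slightly smaller ball, and using \eqref{structural-reference-1}--\eqref{structural-reference-2} (with $\eta=0$ to get coercivity $\langle\A(x,u,\nabla u),\nabla u\rangle\gtrsim |\nabla u|^p$ and the growth bound for the cross terms), together with Young's inequality, yields $\|\nabla u\|_{L^p(B_r)}\leq C(\|u\|_{L^p(B_6)}+\||\F|^{1/(p-1)}\|_{L^p(B_6)})$, and the last term is absorbed into $\||\F|^{1/(p-1)}\|_{L^q(B_6)}$ by Hölder's inequality since $B_6$ is bounded and $q>p$. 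Combining the two estimates and relabeling constants gives the claimed bound. I do not anticipate a genuine obstacle in this deduction itself — the real work is in Theorem~\ref{main-result} — but the step requiring the most care is making sure the dependence of $\delta$ and $C$ on the data is exactly $(p,q,n,\Lambda,\K,M_0)$ and, in particular, that the constant from the Caccioppoli estimate and the constant from the main theorem both remain under that same list when the cutoff radii and the interval $\K$ (through the Lipschitz-in-$z$ bound near the range of $u$) are fixed.
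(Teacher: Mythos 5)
Your overall plan --- deduce the theorem from Theorem~\ref{main-result} --- is the right one and is what the paper does, but the actual reduction is missing, and the one substantive step you do propose points in the wrong direction. The hypothesis of Theorem~\ref{main-result} is not the smallness of the mean oscillation \eqref{smallness-BMO} itself; it is the smallness of $\dist\big(\A,\G_{B_\rho(y)}(\eta)\big)$, i.e.\ the $L^1$-distance from $\A$ to \emph{some member of the reference class} $\G_{B_\rho(y)}(\eta)$. To convert \eqref{smallness-BMO} into that hypothesis one must exhibit a concrete competitor, and the paper takes the frozen average $a_{\rho,y}(z,\xi):=\A_{B_\rho(y)}(z,\xi)$ and verifies that it actually belongs to the class $\G$ with $\eta_0\equiv 0$. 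That verification is precisely where the extra differentiability hypotheses $\langle\partial_\xi\A\,\eta,\eta\rangle\ge\Lambda^{-1}|\xi|^{p-2}|\eta|^2$ and $|\partial_\xi\A|\le\Lambda|\xi|^{p-2}$ are used: they pass to the average and give ({\bf H1}), and --- crucially --- they guarantee ({\bf H3}), the interior $W^{1,\infty}$ estimate for bounded solutions of the homogeneous frozen equation $\div a_{\rho,y}(x,\bar v,\nabla\bar v)=0$, which the paper imports from \cite[Theorem~1.2]{HNP2}. Your proposal dismisses these hypotheses as ``already built in'' because they imply \eqref{structural-reference-1}--\eqref{structural-reference-2}; that reading misses their actual role, and without ({\bf H3}) for the comparison field the approximation machinery behind Theorem~\ref{main-result} (Lemma~\ref{lm:localized-compare-gradient}, Lemma~\ref{initial-density-est}) has nothing to run on.

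The second problem is the claim that \eqref{structural-reference-3} together with $\|u\|_{L^\infty(B_5)}\le M_0$ lets one ``treat the $z$-dependence as a perturbation and reduce to the $u$-independent equation \eqref{like-p-Laplacian} for which the conclusion is known.'' This does not work: \eqref{structural-reference-3} only gives $|\A(x,u(x),\xi)-\A(x,z,\xi)|\le\Lambda|\xi|^{p-1}|u(x)-z|$, and $|u(x)-z|$ is merely bounded (by $2M_0$ or the length of $\K$), not small, so freezing $z$ produces an error of size $O(1)\cdot|\xi|^{p-1}$ rather than $O(\delta)\cdot|\xi|^{p-1}$. Avoiding exactly this naive freezing is the point of the paper: the $z$-dependence is handled inside Theorem~\ref{main-result} through the two-parameter family \eqref{ME} and the compactness argument of Lemma~\ref{lm:compare-solution}, not by a perturbation off \eqref{like-p-Laplacian}. (Your closing Caccioppoli step is fine but redundant: Theorem~\ref{main-result} already states its conclusion with $\|u\|_{L^p(B_6)}$ on the right-hand side, and one simply takes $\lambda=\theta=1$ there.)
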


Condition \eqref{smallness-BMO} means that the $BMO$ modulus of $\A$ in the $x$ variable is sufficiently small and hence it is automatically satisfied when $x\mapsto \A(x,z,\xi)$  is of vanishing mean oscillation. In particular, \eqref{smallness-BMO}  allows $\A$ to be discontinuous in $x$. We note that some smallness condition in $x$ for $\A$ is necessary since  it was known from Meyers' work  \cite{Me} that  in general  weak solutions to \eqref{like-p-Laplacian} do not possess interior $W^{1,q}$-estimates for every $q>p$ even in the linear case 
(i.e. $\A(x,  \nabla u) = \A(x)\nabla u$ and $p=2$).

$W^{1,q}$ theory   for equation  \eqref{like-p-Laplacian} was pioneered by Caffarelli and Peral. In \cite{CP}, these authors derived interior $W^{1,q}$-estimates for solutions to  \eqref{like-p-Laplacian} when $\A$ is sufficiently close in the $L^\infty$ sense to its average in the $x$ variable in every small scales. For the case $\A(x,\xi)= \langle \A(x)\xi, \xi\rangle^{\frac{p-2}{2}}\A(x) \xi$ with the matrix $\A(x)$ being uniformly elliptic and bounded, Kinnunen and Zhou \cite{KZ} obtained interior $W^{1,q}$-estimates when $\A(x)\in VMO$, i.e. $\A(x)$ is of vanishing mean oscillation. Recently,    Byun and Wang \cite{BW} (see also \cite{BWZ})
were able to obtain $W^{1,q}$-estimates for \eqref{like-p-Laplacian} under the assumption that the $BMO$ modulus of $\A$ in the $x$ variable is sufficiently small. Our obtained estimates in Theorem~\ref{simplest-main-result} are the same spirit as \cite{BW} but for general quasilinear elliptic equations of the form \eqref{GE}.

The proofs of $W^{1,q}$-estimates for solutions to  \eqref{like-p-Laplacian} in the above mentioned work 
use the perturbation technique from \cite{C,CC,CP}
and rely essentially on the central fact that equations of this type are invariant with respect to dilations and rescaling of domains. Unfortunately, this is no longer true for equations of the general form \eqref{GE} and this presents a serious obstacle
in deriving  $W^{1,q}$-estimates 
for their solutions. Our idea to handle this issue is to enlarge the class of equations under consideration in a suitable way
by considering the associated quasilinear elliptic  equations with two parameters (see equation \eqref{ME}). The class of these  equations is the smallest one that is invariant with respect to dilations and rescaling of domains and that contains equations of the form \eqref{GE}.
Given the invariant structure, a key step in our derivation  of $W^{1,q}$-estimates for the solution $u$ is to be able to approximate  $\nabla u$ 
by a good gradient in $L^p$ norm in a suitable sense (see
Corollary~\ref{cor:compare-gradient}). However, with the more general class of equations there arise new difficulties in  
this task as   we need to obtain the approximation uniformly with respect to the two parameters.
We achieve this in Lemma~\ref{lm:compare-solution} and Corollary~\ref{cor:compare-gradient}, and it is crucial that
the constants $\delta$ there  are independent of the two parameters $\lambda$ and $\theta$.  The main technical point of this paper is Lemma~\ref{lm:compare-solution}
which is a key point in our proof and is obtained through a delicate compactness argument. This kind of compactness arguments with parameters was first introduced in our recent paper 
  \cite{HNP1}
  where   parabolic equations whose  principle parts are linear in the gradient variable were considered. Here we extend further the argument  to take care the highly nonlinear structure in gradient of our equation \eqref{GE}.  Enlarging the class of equations to ensure the invariances  while still being able to obtain intermediate estimates uniformly with respect to the enlargement  is the main reason for our achievement and is  the novelty of this work. 

Our obtained  interior $W^{1,q}$-estimates in  Theorem~\ref{simplest-main-result} for general quasilinear elliptic equations  extend the corresponding estimates derived  
in \cite{CP, BW} for 
equation \eqref{like-p-Laplacian}. These estimates complement the celebrated interior $C^{1,\alpha}$-estimates by DiBenedetto \cite{D} and Tolksdorf \cite{T} for  \eqref{GE}. In fact,  Theorem~\ref{simplest-main-result} is  a particular consequence of our more general result established in Theorem~\ref{main-result}. It is worth  pointing out  that Theorem~\ref{main-result} is  even new when restricted to the simpler equation \eqref{like-p-Laplacian}. Indeed,   we only assume that  the distance from  $\A(x,\xi)$ to  a large set of "good" vector fields to be small while the previous work requires 
the distance from  $\A(x,\xi)$ to its average in the $x$ variable to be small.  More importantly, we identify  the properties of these good vector fields and are able to implement the general idea that  weak solutions to \eqref{GE}  possess   interior $W^{1,q}$- estimates for  any $q\in (p, \infty)$ provided that the equation is sufficiently close to a homogeneous  equation of similar form whose Dirichlet problem  has a unique weak solution admitting interior $W^{1,\infty}$-estimates.

 The method of our proofs in this paper  is quite robust and we illustrate   this  in Subsection~\ref{subsec:Orlicz}
by showing  that the interior estimates obtained in Theorem~\ref{main-result} still hold true in the setting of  {\it Orlicz} spaces (see Theorem~\ref{second-result} for the precise statement).
 We end the introduction by noting that quasilinear equations of general structures 
\eqref{structural-reference-1}--\eqref{structural-reference-3}
 arise in several applications and the availability of $W^{1,q}$-estimates for their solutions might be helpful for answering some open questions in these problems. We refer readers to \cite{HNP1} for such an application of $W^{1,q}$-estimates.

\section{Quasilinear elliptic equations of $p$-Laplacian type and main results}\label{sec:main-result}
Our goal is to derive interior $W^{1,q}$- estimates for weak solutions to
%We  consider    %quasilinear elliptic 
%equations of the form:
\begin{equation}\label{ME-1}
\div \A(x, u, \nabla u)=  \div \F \quad \mbox{in}\quad B_6
\end{equation}
%Our model examples %are:
%\[
%\A(x, u, \nabla u) =  %\langle  \A(x) \nabla %u, \nabla %u\rangle^{\frac{p-2}%{2}}\A(x) %\nabla u
%\]
%with $\K=\R$, and
%\[
%\A(x, u, \nabla u) = %(1+ \alpha u)^{p-1}  %\langle  \A(x) \nabla %u, \nabla %u\rangle^{\frac{p-2}%{2}}\A(x) %\nabla u
%\]
%with  $\alpha\geq 0$ %and $\K= (\frac{a}%{\alpha}, %\frac{b}{\alpha})$ %where $a,b$ are %constants satisfying %$-1<a<b$.
 for any $q\in (p, \infty)$. We shall show that this is possible if   \eqref{ME-1} is close to a homogeneous  equation of similar form whose Dirichlet problem  has a unique weak solution admitting interior $W^{1,\infty}$-estimates. 
%Needless to say, the %central point is how %to measure the %closeness.
 For this purpose,  we introduce in the next subsection the  class   of reference equations together with a quantity  used to  measure   the closeness between two equations. In Subsection~\ref{sub:eqparameters},
 we explain the reasons for enlarging the class of equations under consideration.
\subsection{The class of good reference equations}
%Let $\omega: [0,\infty) \to \R$  be a locally %bounded function with $\lim_{s\to %0^+}\omega(s)=\omega(0)=0$, and
Let $\eta: \overline\K\times [0,\infty)\to \R$ be such that $\lim_{r\to 0^+}\eta(z,r)=\eta(z,  0)=0$ for each $z\in \overline\K$. Let  $\G_{B_3}(\eta)$ denote the class of all  vector fields $\ba: B_3\times \overline\K\times \R^n \longrightarrow \R^n$ satisfying  conditions \eqref{structural-reference-1}-- \eqref{structural-reference-3} for a.e. $x\in B_3$  together with the following additional  properties:
\begin{itemize}
\item[(\bf{H1})] For a.e. $x\in B_3$ and every $z\in \overline{\K}$, the map $\xi\mapsto \ba(x,z,\xi)$ is continuously differentiable away from the origin with
 \begin{equation*}\label{structural-reference-4}
 |\partial_\xi \ba(x,z,\xi)|\leq \Lambda |\xi|^{p-2} \quad \forall \xi\in\R^n\setminus \{0\}. 
 \end{equation*}
 %\item[(\bf{H2})] For %a.e. $x\in B_3$ and %every $\xi\in\R^n$, %there holds
% \[
 %|\ba(x,z_1, \xi) %-\ba(x, z_2,\xi)| \leq %\Lambda |\xi|^{p-1} |%z_1 - z_2| \quad %\forall z_1, z_2\in  %\overline{\K}.
% \]
\item[(\bf{H2})] For every $z\in \overline\K$, we have
\begin{equation*}\label{osc-ref-eq}
\sup_{\xi\neq 0}\sup_{x_0: \, B_r(x_0)\subset B_3 } \fint_{B_r(x_0)} \frac{\big|\ba(x,z,\xi) - \ba_{B_r(x_0)}(z,\xi)\big|}{|\xi|^{p-1}} \,dx\leq \eta(z,r)\quad \mbox{for all $r>0$ small}.
\end{equation*}
%where $\ba_{B_r(x_0)}(z,\xi) := \fint_{B_r(x_0)}
%{\ba(x,z,\xi)\, dx}$.
\item[(\bf{H3})] For any $M>0$ and  $0<R\leq 1$, if $\bar v$ is a weak solution to $\div \ba(x, \bar v, \nabla \bar v)=0$ in $B_{3R}$ satisfying $\|\bar v\|_{L^\infty(B_{3 R})}\leq M$ then we have
\begin{equation*}\label{Lipschitz-ass} 
\|\nabla \bar v\|_{L^\infty(B_{\frac{5r}{6}})}^p\leq C(p,n,\eta, \Lambda, \K, M) \fint_{B_r}{|\nabla \bar v|^p\, dx}\quad \forall 0<r\leq 3R.
\end{equation*}
  \end{itemize}
By taking $\ba(x,z,\xi) =\ba(\xi)$, it is clear that  the class $\G_{B_3}(\eta)$ is nonempty. In fact, it contains a large number of vector fields as shown in our recent paper \cite{HNP2}. 
Ones also find in \cite{HNP2} that the class of  vector fields considered in  \cite{D, T}  to derive 
 interior $C^{1,\alpha}$-
estimates for the corresponding homogeneous equations belongs to $\G_{B_3}(\eta)$ for $\eta(z,r)\equiv \gamma_1 r$ with  $\gamma_1 $ being some positive constant.
\begin{definition}\label{def:rescaled-class} Let $y\in B_1$,  $0<\rho\leq 3$, and  $B_\rho(y) := \{x\in \R^n:\, | x-y| <\rho\}$. 
\begin{enumerate}
\item[(i)] We define
\begin{align*}
\G_{B_\rho(y)}(\eta)
:=\left\{\ba(x,z,\xi) :=\ba
'\big(\frac{3(x-y)}{\rho},z,\xi\big) \, \mbox{ for } \, (x,z,\xi)\in B_\rho(y)\times \overline\K\times \R^n \big| \,\, \ba'\in \G_{B_3}(\eta) \right\}.
\end{align*}
\item[(ii)] Let $\A: B_\rho(y)\times \overline\K\times \R^n \longrightarrow \R^n$ be a Carath\'eodory map. Define the distance
\begin{equation*}
\dist\Big(\A, \G_{B_\rho(y)}(\eta)\Big) :=
 \inf_{\ba \in \G_{B_\rho(y)}(\eta)}\fint_{B_\rho(y)} \Big[
\sup_{z\in \overline\K}\sup_{ \xi\neq 0}\frac{|\A(x,z,\xi) - \ba(x,z,\xi)|}{|\xi|^{p-1}}
\Big] \,dx. 
\end{equation*}
\end{enumerate}
\end{definition}

\begin{remark}\label{existen
ce+uniqueness} We note that under conditions \eqref{structural-reference-1}--\eqref{structural-reference-3}  for the vector field $\ba$,  the  homogeneous equation $\div \ba(x, v, \nabla v)=0$ in $B_3$ admits the comparison principle (see \cite[Theorem~1.2]{CMPo}). This together with the classical existence result due to Leray and Lions \cite{LL,L} (see also \cite[Theorem~2.8]{CPo}) ensures that: for any $u\in W^{1,p}(B_3)$ with $u(x)\in \overline
\K\,$  for a.e. $x\in B_3$, the Dirichlet problem 
\begin{equation*}
\left \{
\begin{array}{lcll}
\div \ba(x, v,\nabla v) &=&   0 \quad &\text{in}\quad B_3, \\
v & =& u\quad &\text{on}\quad \partial B_3
\end{array}\right.
\end{equation*}
has a unique weak solution $v\in W^{1,p}(B_3)$ satisfying $v(x)\in \overline
\K\,$  for a.e. $x\in B_3$. %Furthermore, $\|v\|%_{L^\infty(B_4)}\leq M$ whenever %$\|u\|_{L^\infty(B_4)}\leq M$.
\end{remark}

\subsection{Quasilinear  equations with two parameters}
\label{sub:eqparameters}
Let us consider a function  $u\in W_{loc}^{1,p}(B_{r R})$  such that $u(y)\in \overline\K$ for a.e. $y\in B_{r R}$ and
$u$ satisfies 
\begin{equation*}
\div \A(y,u,\nabla u) =  \div \F\quad \text{in}\quad B_{r R }
\end{equation*}
in the  sense of distribution. Then the rescaled  function 
\begin{equation}\label{scaled-solutions}
 v(x) := \frac{u(r x)}{\mu\,  r} \quad \mbox{for}\quad r, \, \mu>0
 \end{equation}
has the properties: $v(x)\in \frac{1}{\mu r}\overline\K$ for a.e. $x\in B_R$ and $v$ solves the equation
\begin{equation*}
\div \A_{\mu, r}(x,\mu r v,\nabla v) =  \div \F_{\mu, r}\quad \text{in}\quad B_R
\end{equation*}
in the distributional sense. Here,
\begin{equation*} 
\A_{\mu, r}(x,z,\xi) := \frac{\A(rx,  z, \mu \xi)}{\mu^{p-1}} \quad\mbox{ and }\quad  \F_{\mu, r}(x) := \frac{\F(r x)}{\mu^{p-1}}. 
 \end{equation*} 
 It is clear that if  $\A: B_{r R}\times \overline\K\times \R^n \longrightarrow \R^n$ satisfies  conditions \eqref{structural-reference-1}--\eqref{structural-reference-3}, then 
the rescaled vector field $ \A_{\mu, r}: B_{R}\times \overline\K\times \R^n \longrightarrow \R^n$ also satisfies the structural conditions 
with the same constants. 

The above observation  shows that  equations of type \eqref{ME-1} are not invariant with respect to the standard scalings 
\eqref{scaled-solutions}. This presents a serious obstacle in obtaining $W^{1,q}$-estimates for their solutions as they do not generate enough  estimates 
to carry out the proof by using existing methods.  Our idea is to enlarge the class by considering associated quasilinear equations with two parameters 
\begin{equation}\label{ME}
\div \Big[ \frac{\A(x, \lambda \theta u, \lambda \nabla u)}{\lambda^{p-1}}\Big]=  \div \F \quad \mbox{in }\, B_6
\end{equation}
 with $\lambda,\, \theta>0$. The class of these equations is the smallest one that is invariant with respect to the transformations \eqref{scaled-solutions}
and that contains equations of type \eqref{ME-1}. Indeed,  if $u$ solves \eqref{ME} and $v$ is given by  \eqref{scaled-solutions}, then $v$ 
 satisfies an equation of similar form, namely,  $
\div \big[ \frac{\A'(y, \lambda' \theta' v, \lambda' \nabla v)}{\lambda'^{p-1}}\big]=  \div \F'$ in  $B_{\frac6r}$
with $\A'(y, z, \xi) := \A(r y, z, \xi)$, $\F'(y) := \F(r y )/\mu^{p-1}$, $\lambda' := \mu \lambda$  and $\theta':=r\theta$.

Let us give the precise definition of weak solutions that is used throughout the paper.
\begin{definition}
\label{weak-sol}
Let $\F\in L^{\frac{p}{p-1}}(B_6;\R^n)$. A function $u\in W^{1,p}_{\text{loc}}(B_6)$ is called a weak solution  of \eqref{ME} if  $u(x)\in \frac{1}{\lambda \theta}\overline\K \,$ for a.e. $x\in B_6$ and 
\begin{equation*}
\int_{B_6} \Big\langle\frac{ \A (x,\lambda \theta u,\lambda \nabla u)}{\lambda^{p-1}}, \nabla \varphi \Big\rangle\, dx =\int_{B_6} \langle \F, \nabla \varphi\rangle \, dx\qquad \forall \varphi \in W_0^{1,p}(B_6).
 \end{equation*}

\end{definition}

Our main result on the interior regularity is the following theorem:
\begin{theorem}\label{main-result} Assume that  $\A$ satisfies   
\eqref{structural-reference-1}--\eqref{structural-reference-3}, and $M_0>0$. 
For any $q>p$, there exists a constant $\delta=\delta(p,q,n ,\Lambda, \eta, \K, M_0)>0$  such that: if $\lambda>0$, $0<\theta \leq 1$,
\begin{equation*}
\sup_{0<\rho\leq 3}\sup_{y\in B_1} \dist \Big(\A, \G_{B_\rho(y)}(\eta)\Big) \leq \delta,
\end{equation*}
and $u$  is a weak solution of \eqref{ME}
satisfying $\|u\|_{L^\infty(B_5)}\leq\frac{M_0}{\lambda \theta}$, then
\begin{equation}\label{main-estimate}
\|\nabla u\|_{L^q(B_1)} \leq C(p,q,n,\Lambda, \eta, \K, M_0)\left(
\|u\|_{L^p(B_6)} + \||\F|^{\frac{1}{p-1}}\|_{L^q(B_6)}\right).
\end{equation}
%Here  $C>0$ is a constant 
% depending only on  $q$, $p$, $n$,  $\Lambda$,  %$\eta$, $\K$ and $M_0$.
\end{theorem}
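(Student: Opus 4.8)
\textbf{Proof strategy for Theorem~\ref{main-result}.}

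The plan is to follow the now-standard perturbation-and-maximal-function scheme (in the spirit of \cite{C,CC,CP,BW}), but carried out uniformly in the two parameters $\lambda>0$ and $0<\theta\le 1$. First I would fix the weak solution $u$ of \eqref{ME} with $\|u\|_{L^\infty(B_5)}\le M_0/(\lambda\theta)$, and introduce the natural upper level sets of the gradient,
\[
E(s) := \{x\in B_1 : \M(|\nabla u|^p)(x) > s\},
\]
where $\M$ is a (localized) Hardy--Littlewood maximal function, and likewise for $\M(|\F|^{p/(p-1)})$. The goal is a density/decay estimate of the form $|E(Ns)|\le \varepsilon\,|E(s)| + |\{x: \M(|\F|^{p/(p-1)})>cs\}|$ on small balls, which by the Vitali-type covering lemma of Caffarelli--Peral (or Wang's version) upgrades to the $L^q$ bound \eqref{main-estimate} for every $q>p$ once $\varepsilon$ is chosen small relative to $q$. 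The point where this paper departs from the literature is that the comparison estimate feeding into the density bound must be the one obtained in Lemma~\ref{lm:compare-solution} and Corollary~\ref{cor:compare-gradient}, whose smallness threshold $\delta$ does \emph{not} depend on $\lambda,\theta$; this is exactly what makes the parametrized family \eqref{ME} behave like a single scaling-invariant class.

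Concretely, the key steps are: (1) \emph{Normalization.} Given a small ball $B_\rho(y)\subset B_1$ on which the measure of $E$ is not too large, rescale via \eqref{scaled-solutions} so that $B_\rho(y)$ becomes a unit ball, the rescaled vector field still lies within $\delta$ of $\G(\eta)$ (this uses that the class $\G_{B_\rho(y)}(\eta)$ and the distance $\dist(\cdot,\G_{B_\rho(y)}(\eta))$ are defined to be scaling-covariant, Definition~\ref{def:rescaled-class}), and the $L^p$ norm of the rescaled gradient is normalized; crucially the rescaled problem is again of the form \eqref{ME} with new parameters $\lambda',\theta'$, and $\theta'=r\theta\le\theta\le 1$ is preserved. (2) \emph{Comparison.} On the normalized ball, invoke Corollary~\ref{cor:compare-gradient} to produce a vector field $\ba\in\G(\eta)$ and the weak solution $\bar v$ of the associated homogeneous problem $\div\,\ba(x,\bar v,\nabla\bar v)=0$ with the same boundary data, such that $\fint |\nabla u - \nabla\bar v|^p\,dx$ is as small as we like, uniformly in the parameters, provided $\delta$ is small and $\F$ is small in the appropriate averaged sense. (3) \emph{Lipschitz bound for the reference solution.} Use property (H3) of the class $\G(\eta)$: $\bar v$ satisfies an interior $W^{1,\infty}$-estimate, $\|\nabla\bar v\|_{L^\infty(B_{5/6})}^p \le C\fint_{B_1}|\nabla\bar v|^p\,dx$, with $C$ depending only on $p,n,\eta,\Lambda,\K,M$ and, importantly, not on the parameters (the $L^\infty$ bound on $\bar v$ transfers from that on $u$ up to the normalization). (4) \emph{Good-$\lambda$ inequality.} Combining the smallness from (2) with the pointwise bound from (3) and a standard covering/localization argument on level sets of $\M(|\nabla\bar v|^p)$ versus $\M(|\nabla u|^p)$ yields the density estimate for $E(Ns)$. (5) \emph{Iteration/summation.} Sum the density estimate over dyadic levels to obtain $\|\M(|\nabla u|^p)^{1/p}\|_{L^q(B_1)}\le C(\|\nabla u\|_{L^p}+\||\F|^{1/(p-1)}\|_{L^q})$, then discard the maximal function (it is bounded on $L^{q/p}$ since $q/p>1$) and absorb $\|\nabla u\|_{L^p(B_6)}$ into $\|u\|_{L^p(B_6)}$ via a Caccioppoli inequality applied to \eqref{ME}, which holds with constant independent of $\lambda,\theta$ because of the structural scaling. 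This produces \eqref{main-estimate}.

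The main obstacle — and the reason Lemma~\ref{lm:compare-solution} is flagged as the technical heart of the paper — is ensuring that the comparison in step (2) holds with a threshold $\delta$ and approximation constants that are genuinely uniform over all $\lambda>0$ and $0<\theta\le1$. A naive compactness argument would produce, for each fixed pair of parameters, some admissible $\delta$, but one needs a single $\delta$ that works for the whole family; this is handled by running the compactness/contradiction argument on sequences in which the parameters are allowed to vary along with the data, extracting limits of the rescaled solutions and of the (normalized) vector fields, and showing the limit equation is a homogeneous equation governed by a vector field in $\G(\eta)$ — here the constraint $0<\theta\le1$ keeps the $z$-dependence ``inert'' in the limit (the product $\lambda\theta u$ stays controlled so the Lipschitz-in-$z$ structure \eqref{structural-reference-3} does not blow up). Once that uniform comparison is in hand, everything else is a careful but routine adaptation of the Caffarelli--Peral machinery, with the one extra bookkeeping task of tracking that each rescaling stays inside the parametrized class and that all constants ($C$ in (3), the Caccioppoli constant, the maximal-function bound) are parameter-free.
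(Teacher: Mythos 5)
Your proposal follows essentially the same route as the paper: a good-$\lambda$/density estimate for level sets of $\M(|\nabla u|^p)$ obtained by comparing $u$ on rescaled balls with a Lipschitz reference solution from the class $\G(\eta)$ (via the parameter-uniform compactness argument of Lemma~\ref{lm:compare-solution} and Corollary~\ref{cor:compare-gradient}, localized as in Lemma~\ref{lm:localized-compare-gradient}), upgraded by the Caffarelli--Peral/Wang covering lemma, iterated over dyadic levels using the $\lambda$-parametrization of \eqref{ME}, and closed with the strong maximal function bound and the Caccioppoli estimate of Lemma~\ref{W^{1,p}-est}. The strategy and all key ingredients match the paper's proof, including the correct identification of why the two-parameter family and the $\lambda,\theta$-uniform threshold $\delta$ are the crux.
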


By taking $\lambda =\theta =1$ in Theorem~\ref{main-result}, we then obtain  $W^{1,q}$-estimates for weak  solutions to original equation
 \eqref{ME-1}. Another observation is that  any function $\f\in L^p(B_6)$ can be written in the form
 $\,\f =\div \nabla \psi$, where $\psi \in W_0^{1,2}(B_6)$ is the weak solution to the Dirichlet problem 
 \begin{equation*}
\left \{
\begin{array}{lcll}
\Delta \psi &=&   \f \quad &\text{in}\quad B_6, \\
\psi & =& 0\quad &\text{on}\quad \partial B_6.
\end{array}\right.
\end{equation*}
Moreover by the standard estimate using Riesz
potential (see  \cite[page 195]{MP} for an explanation), we have when  $1<l<n$ that  
 \[
 \|\nabla \psi\|_{L^{\frac{nl}{n-l}}(B_6)}\leq C(n,l) \, \|\f\|_{L^l(B_6)}.
 \]
 These facts together with Theorem~\ref{main-result} yield:
\begin{corollary} Assume that  $\A$ satisfies 
\eqref{structural-reference-1}--\eqref{structural-reference-3}, and $M_0>0$. 
For any $\max{\big\{1,\frac{np}{np+p-n}\big\}}<l<n$, there exists a constant $\delta=\delta(p,l,n ,\Lambda, \eta, \K, M_0)>0$  such that: if $\lambda>0$, $0<\theta \leq 1$,
\begin{equation*}
\sup_{0<\rho\leq 3}\sup_{y\in B_1} \dist \Big(\A, \G_{B_\rho(y)}(\eta)\Big) \leq \delta,
\end{equation*}
and $u$  is a weak solution of 
\begin{equation*}
\div \Big[ \frac{\A(x, \lambda \theta u, \lambda \nabla u)}{\lambda^{p-1}}\Big]=  \div \F +\f \quad \mbox{in}\quad B_6
\end{equation*}
satisfying $\|u\|_{L^\infty(B_5)}\leq\frac{M_0}{\lambda \theta}$, then
\begin{equation*}
\|\nabla u\|_{L^{\frac{nl(p-1)}{n-l}}(B_1)}^{p-1} \leq C(p,l,n,\Lambda, \eta, \K, M_0)\left(
\|u\|_{L^p(B_6)}^{p-1} + \|\F\|_{L^{\frac{nl}{n-l}}(B_6)} + \|\f\|_{L^l(B_6)}\right).
\end{equation*}
\end{corollary}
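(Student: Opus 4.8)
The plan is to reduce the Corollary to Theorem~\ref{main-result} by absorbing the extra source term $\f\in L^l(B_6)$ into the divergence-form right-hand side. First I would follow the suggestion already spelled out in the excerpt: given $\f\in L^l(B_6)$ with $l>1$, let $\psi\in W_0^{1,2}(B_6)$ solve the Dirichlet problem $\Delta\psi=\f$ in $B_6$, $\psi=0$ on $\partial B_6$, so that $\f=\div(\nabla\psi)$ in the distributional sense. Then the equation for $u$ becomes
\begin{equation*}
\div\Big[\frac{\A(x,\lambda\theta u,\lambda\nabla u)}{\lambda^{p-1}}\Big]=\div\big(\F+\nabla\psi\big)\quad\text{in }B_6,
\end{equation*}
which is precisely equation \eqref{ME} with the modified field $\widetilde\F:=\F+\nabla\psi\in L^{p/(p-1)}(B_6;\R^n)$ (one checks $\nabla\psi\in L^{p/(p-1)}$ using the Riesz potential estimate and $l>\frac{np}{np+p-n}$, which is exactly the condition that $\frac{nl}{n-l}\ge \frac{p}{p-1}$). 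Since $u$ still satisfies $\|u\|_{L^\infty(B_5)}\le M_0/(\lambda\theta)$ and the closeness hypothesis $\sup_{0<\rho\le3}\sup_{y\in B_1}\dist(\A,\G_{B_\rho(y)}(\eta))\le\delta$ is unchanged, Theorem~\ref{main-result} applies verbatim with $q:=\frac{nl}{n-l}$ (note $q>p$ precisely because $l>\frac{np}{np+p-n}$), giving
\begin{equation*}
\|\nabla u\|_{L^{\frac{nl}{n-l}}(B_1)}\le C\Big(\|u\|_{L^p(B_6)}+\big\||\widetilde\F|^{\frac{1}{p-1}}\big\|_{L^{\frac{nl}{n-l}}(B_6)}\Big).
\end{equation*}

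Next I would raise this to the power $p-1$ and use the elementary inequality $(a+b)^{p-1}\le C_p(a^{p-1}+b^{p-1})$ together with $\||\widetilde\F|^{1/(p-1)}\|_{L^{nl/(n-l)}}^{p-1}=\|\widetilde\F\|_{L^{nl/(n-l)}}$ and the triangle inequality $\|\widetilde\F\|_{L^{nl/(n-l)}}\le\|\F\|_{L^{nl/(n-l)}}+\|\nabla\psi\|_{L^{nl/(n-l)}}$. Finally, the standard Riesz-potential estimate quoted in the excerpt, $\|\nabla\psi\|_{L^{nl/(n-l)}(B_6)}\le C(n,l)\|\f\|_{L^l(B_6)}$ (valid for $1<l<n$), converts the $\nabla\psi$ term into $\|\f\|_{L^l(B_6)}$. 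Combining these gives exactly
\begin{equation*}
\|\nabla u\|_{L^{\frac{nl(p-1)}{n-l}}(B_1)}^{p-1}\le C(p,l,n,\Lambda,\eta,\K,M_0)\Big(\|u\|_{L^p(B_6)}^{p-1}+\|\F\|_{L^{\frac{nl}{n-l}}(B_6)}+\|\f\|_{L^l(B_6)}\Big),
\end{equation*}
with $\delta=\delta(p,l,n,\Lambda,\eta,\K,M_0)$ inherited from Theorem~\ref{main-result} with $q=\frac{nl}{n-l}$.

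There is essentially no hard analytic content here — the entire proof is a change of right-hand side plus bookkeeping — so the only point requiring care is the arithmetic of exponents: one must verify that the condition $\max\{1,\frac{np}{np+p-n}\}<l<n$ simultaneously guarantees (a) $\frac{nl}{n-l}>p$ so Theorem~\ref{main-result} is applicable, and (b) $\frac{nl}{n-l}\ge\frac{p}{p-1}$ so that $\nabla\psi\in L^{p/(p-1)}(B_6;\R^n)$ and $\widetilde\F$ is an admissible datum in the sense of Definition~\ref{weak-sol}. Both reduce to the single inequality $l>\frac{np}{np+p-n}$ after a short computation, which is why the stated range of $l$ is exactly what appears. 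One should also remark that when $p\ge n$ the threshold $\frac{np}{np+p-n}$ can be $\le 1$, which is why the $\max\{1,\cdot\}$ is present; in that regime any $l\in(1,n)$ works. Aside from this exponent check, the argument is a direct corollary.
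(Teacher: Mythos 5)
Your overall strategy is exactly the paper's: the paper proves this corollary by writing $\f=\div\nabla\psi$ with $\psi$ solving $\Delta\psi=\f$, invoking the Riesz-potential bound $\|\nabla\psi\|_{L^{nl/(n-l)}(B_6)}\leq C(n,l)\|\f\|_{L^l(B_6)}$, and then applying Theorem~\ref{main-result} to the modified datum $\F+\nabla\psi$. However, your exponent bookkeeping is off in a way that makes the stated conclusion not follow from your intermediate display: the correct choice is $q=\frac{nl(p-1)}{n-l}$, not $q=\frac{nl}{n-l}$. Three things go wrong with your choice. First, the left-hand side of the target estimate is $\|\nabla u\|_{L^{nl(p-1)/(n-l)}(B_1)}^{p-1}$, whereas raising your intermediate estimate to the power $p-1$ produces $\|\nabla u\|_{L^{nl/(n-l)}(B_1)}^{p-1}$. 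Second, the identity you use, $\big\||\widetilde\F|^{1/(p-1)}\big\|_{L^{nl/(n-l)}}^{p-1}=\|\widetilde\F\|_{L^{nl/(n-l)}}$, is false: for any $G$ and any $s$ one has $\big\||G|^{1/(p-1)}\big\|_{L^{s}}^{p-1}=\|G\|_{L^{s/(p-1)}}$, so to land on $\|\widetilde\F\|_{L^{nl/(n-l)}}$ you must take $s=\frac{nl(p-1)}{n-l}$. Third, your claim (a) that $\frac{nl}{n-l}>p$ reduces to $l>\frac{np}{np+p-n}$ is incorrect; a direct computation gives $\frac{nl}{n-l}>p\iff l>\frac{np}{n+p}$, which is a genuinely different threshold unless $p=2$. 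By contrast,
\begin{equation*}
\frac{nl(p-1)}{n-l}>p \iff l\big(np+p-n\big)>np \iff l>\frac{np}{np+p-n},
\end{equation*}
which is exactly the hypothesis of the corollary, and simultaneously $\frac{q}{p-1}=\frac{nl}{n-l}\geq\frac{p}{p-1}$ guarantees $\widetilde\F\in L^{p/(p-1)}(B_6;\R^n)$, so your point (b) survives unchanged.

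The repair is mechanical: apply Theorem~\ref{main-result} with $q=\frac{nl(p-1)}{n-l}$ to get
\begin{equation*}
\|\nabla u\|_{L^{\frac{nl(p-1)}{n-l}}(B_1)}\leq C\Big(\|u\|_{L^p(B_6)}+\big\||\F+\nabla\psi|^{\frac{1}{p-1}}\big\|_{L^{\frac{nl(p-1)}{n-l}}(B_6)}\Big),
\end{equation*}
raise to the power $p-1$, use $\big\||\F+\nabla\psi|^{1/(p-1)}\big\|_{L^{nl(p-1)/(n-l)}}^{p-1}=\|\F+\nabla\psi\|_{L^{nl/(n-l)}}\leq\|\F\|_{L^{nl/(n-l)}}+C(n,l)\|\f\|_{L^l}$, and conclude. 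With this single substitution your argument becomes a correct proof and coincides with the paper's.
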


\section{Some elementary estimates}
In this section we derive  some  elementary estimates which will be used later. For the next lemma  we only consider the case $1<p<2$ since \eqref{structural-reference-1}  obviously yields a  better estimate  when $p\geq 2$.
\begin{lemma}\label{simple-est} Let $U\subset\R^n$ be a bounded open set.
 Assume that $\A:U\times \overline\K\times \R^n\longrightarrow \R^n$ satisfies  \eqref{structural-reference-1} for a.e. $x$ in $U$ and for some $1<p<2$. Then for any functions $u,v \in W^{1,p}(U)$ and any nonnegative function $\phi\in C(\overline{U})$, we have
 \begin{align}\label{s1}
  (1-\tau)\int_{U} |\nabla u -\nabla v|^p\phi \, dx 
 &\leq \tau \int_{U} |\nabla u |^p\phi\, dx\\
  &+ \Lambda 2^{2-p}\frac{p}{2}\Big(\frac{2-p}{2^{2-p}\tau}\Big)^{\frac{2-p}{p}} \int_{U}{ \langle \A(x, u, \nabla u) -\A(x,u,\nabla v), \nabla u - \nabla v \rangle \phi\, dx }\nonumber
   \end{align}
   for every $\tau>0$.
\end{lemma}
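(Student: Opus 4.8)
The plan is to exploit the pointwise ellipticity \eqref{structural-reference-1} together with the elementary fact that for $1<p<2$ one has $(|\xi|+|\eta|)^{p-2}\geq(|\xi|+|\eta|)^{p-2}$ controlling $|\xi-\eta|^{p}$ after inserting a suitable power of $(|\xi|+|\eta|)$. Concretely, write $|\xi-\eta|^{p}=\big[(|\xi|+|\eta|)^{p-2}|\xi-\eta|^{2}\big]^{p/2}\,(|\xi|+|\eta|)^{p(2-p)/2}$. The first bracket is, by \eqref{structural-reference-1}, bounded by $\Lambda\,\langle\A(x,u,\xi)-\A(x,u,\eta),\xi-\eta\rangle$ when we set $\xi=\nabla u$, $\eta=\nabla v$. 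For the second factor I would bound $(|\nabla u|+|\nabla v|)^{p(2-p)/2}\leq 2^{?}\big(|\nabla u|^{p(2-p)/2}+|\nabla v|^{p(2-p)/2}\big)$ and then also use $|\nabla v|\le|\nabla u|+|\nabla u-\nabla v|$ so everything is expressed in terms of $|\nabla u|$ and $|\nabla u-\nabla v|$.

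The key step is then an application of Young's inequality with carefully chosen exponents. After the manipulation above, the integrand $|\nabla u-\nabla v|^{p}\phi$ is dominated by a product of the form $\big[\Lambda\langle\A(x,u,\nabla u)-\A(x,u,\nabla v),\nabla u-\nabla v\rangle\phi\big]^{p/2}\cdot\big[(|\nabla u|+|\nabla u - \nabla v|)^{p}\phi\big]^{(2-p)/2}$, since $\phi=\phi^{p/2}\phi^{(2-p)/2}$ and the exponents $p/2$ and $(2-p)/2$ are conjugate-type weights summing to $1$. Applying Young's inequality $ab\le \varepsilon' a^{2/p}+C_{\varepsilon'}b^{2/(2-p)}$ with the dual exponents $2/p$ and $2/(2-p)$ (note $p/2+ (2-p)/2 =1$, so these are Hölder conjugates), the first term reproduces the energy term $\int \langle\A(x,u,\nabla u)-\A(x,u,\nabla v),\nabla u-\nabla v\rangle\phi\,dx$ times $\Lambda$, and the second term yields $\int(|\nabla u|+|\nabla u-\nabla v|)^{p}\phi\,dx$. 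Absorbing the piece $\int|\nabla u-\nabla v|^{p}\phi\,dx$ coming from the latter into the left-hand side (this is where the factor $(1-\tau)$ on the left appears) and keeping $\int|\nabla u|^{p}\phi\,dx$ on the right produces precisely the claimed inequality \eqref{s1}, and tracking the constants through Young's inequality gives the explicit coefficient $\Lambda\,2^{2-p}\tfrac p2\big(\tfrac{2-p}{2^{2-p}\tau}\big)^{(2-p)/p}$.

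The main obstacle, and the part requiring the most care, is bookkeeping the constants so that they come out exactly as stated — in particular choosing the free parameter in Young's inequality so that the absorbed coefficient on the left is exactly $\tau$ and the remaining coefficient is exactly the stated expression. I would do this by first proving the pointwise inequality
\[
|\nabla u-\nabla v|^{p}\phi \;\le\; \tau\,(|\nabla u|+|\nabla u-\nabla v|)^{p}\phi \;+\; C(p,\tau)\,\langle\A(x,u,\nabla u)-\A(x,u,\nabla v),\nabla u-\nabla v\rangle\,\phi
\]
at a.e.\ point where $\nabla u\ne\nabla v$ (the set $\{\nabla u=\nabla v\}$ contributing nothing), then integrating over $U$ and using $(|\nabla u|+|\nabla u-\nabla v|)^{p}\le 2^{p-1}(|\nabla u|^{p}+|\nabla u-\nabla v|^{p})$ only if needed — in fact it is cleaner to keep $(a+b)^{p}\le a^{p}+ pb(a+b)^{p-1}$-type bounds to match the exponent $2^{2-p}$ precisely. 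A secondary, purely routine point is justifying the use of Young's inequality at the integral level, which is immediate once the pointwise bound holds and all quantities are in $L^{1}$ by the assumptions $u,v\in W^{1,p}(U)$, $\phi\in C(\overline U)$, and the growth bound \eqref{structural-reference-2}.
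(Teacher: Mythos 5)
Your proposal is correct and is essentially the paper's own argument: the paper likewise factors $|\nabla u-\nabla v|^p\phi$ into the two pieces $[(|\nabla u|+|\nabla u-\nabla v|)\phi^{1/p}]^{p(2-p)/2}$ and $[(|\nabla u|+|\nabla u-\nabla v|)^{p(p-2)/2}|\nabla u-\nabla v|^p\phi^{p/2}]$, applies Young's inequality with the conjugate pair $2/(2-p)$, $2/p$, controls the resulting term $(|\nabla u|+|\nabla u-\nabla v|)^{p-2}|\nabla u-\nabla v|^2$ via \eqref{structural-reference-1} rewritten with $|\xi|+|\eta|\le 2(|\xi|+|\xi-\eta|)$ (the source of the factor $2^{2-p}$), and absorbs $\tau\int|\nabla u-\nabla v|^p\phi$ into the left-hand side. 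The only differences are cosmetic (you invoke the monotonicity before rather than after Young's inequality), so no further comparison is needed.
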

\begin{proof} Since $|\xi|+|\eta|\leq 2(|\xi| +|\xi-\eta|)$ and $1<p<2$, we have from \eqref{structural-reference-1} that
\begin{equation}\label{equiv-structural-1}
\langle  \A(x,z,\xi) -\A(x,z,\eta), \xi-\eta\rangle \geq \Lambda^{-1} 2^{p-2} \big(|\xi|+ |\xi-\eta|)^{p-2} |\xi-\eta|^2\quad \forall \xi,\eta\in\R^n.
\end{equation}
 Using Young's inequality,  the assumption $1<p<2$ and  \eqref{equiv-structural-1}, we then obtain
 \begin{align*}
 &\int_{U} |\nabla u -\nabla v|^p \phi\, dx 
 =\int_{U} \big[\big( |\nabla u| + |\nabla u-\nabla v| \big)\phi^{\frac{1}{p}}\big]^{\frac{p(2-p)}{2}} \big[\big(  |\nabla u| + |\nabla u-\nabla v|\big)^{\frac{p(p-2)}{2}}  |\nabla u -\nabla  v|^p\phi^{\frac{p}{2}} \big] \, dx \\
 &\leq \frac{\tau}{2^{p-1}} \int_{U} \big(  |\nabla u| + |\nabla u-\nabla v|\big)^p \phi\, dx 
 + \frac{p}{2}\Big(\frac{2-p}{2^{2-p}\tau}\Big)^{\frac{2-p}{p}} \int_{U} \big(  |\nabla u| + |\nabla u-\nabla v|\big)^{p-2}  |\nabla u -\nabla  v|^2 \phi\, dx \\
 &\leq \tau \int_{U} |\nabla u |^p\phi\, dx  + \tau \int_{U} |\nabla u-\nabla v|^p \phi\, dx\\
 &\quad + \Lambda 2^{2-p}\frac{p}{2}\Big(\frac{2-p}{2^{2-p}\tau}\Big)^{\frac{2-p}{p}} \int_{U}{ \langle \A(x, u, \nabla u) -\A(x,u,\nabla v), \nabla u - \nabla v \rangle \phi \, dx }.
   \end{align*}
   This gives the lemma as desired.
\end{proof}

%\begin{remark}\label{alternative}
% By switching the roles of $u$ and $v$,  the term %$\int_{U}  |\nabla u |^p\phi\, dx$ in \eqref{s1} can %be  replaced by
% $\int_{U}  |\nabla v |^p\phi\, dx$.
%\end{remark}

The next two results are about basic $L^p$-estimates for gradients of weak solutions.
\begin{proposition}\label{pro:gradient-est} Assume that $\A:B_3\times \overline\K\times \R^n\longrightarrow \R^n$ satisfies \eqref{structural-reference-1} and \eqref{structural-reference-2}. Let  $w\in  W^{1,p}(B_3)$  be a weak solution  of  
\begin{equation*}
\left \{
\begin{array}{lcll}
\div \A(x, w,\nabla w) &=&   \div \F \quad &\text{in}\quad B_3, \\
w & =& \varphi\quad &\text{on}\quad \partial B_3,
\end{array}\right.
\end{equation*}
where $\varphi\in  W^{1,p}(B_3)$.   Then
 \begin{equation*}
\int_{B_3}{ |\nabla w|^p \,dx }
\leq C(p,n,\Lambda)\, \Big(\int_{B_3}{|\nabla \varphi|^p\,  dx } +\int_{B_3}{|\F|^{\frac{p}{p-1}} \, dx} \Big).
 \end{equation*}
\end{proposition}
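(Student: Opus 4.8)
The plan is to test the equation with the natural choice of test function $\varphi = w - \varphi$ (extended by its boundary values), which is legitimate since $w - \varphi \in W_0^{1,p}(B_3)$. This gives the identity
\[
\int_{B_3} \langle \A(x,w,\nabla w), \nabla w - \nabla \varphi \rangle \, dx = \int_{B_3} \langle \F, \nabla w - \nabla \varphi \rangle \, dx.
\]
Rearranging, $\int_{B_3} \langle \A(x,w,\nabla w), \nabla w \rangle \, dx = \int_{B_3} \langle \A(x,w,\nabla w), \nabla \varphi \rangle \, dx + \int_{B_3} \langle \F, \nabla w - \nabla \varphi \rangle \, dx$.

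Next I would extract coercivity from the left-hand side. Although \eqref{structural-reference-1} is a monotonicity condition, applying it with $\eta = 0$ together with \eqref{structural-reference-2} (note $\A(x,z,0) = 0$ by \eqref{structural-reference-2}) yields $\langle \A(x,w,\nabla w), \nabla w \rangle \geq \Lambda^{-1} |\nabla w|^p$. Hence the left-hand side is bounded below by $\Lambda^{-1} \int_{B_3} |\nabla w|^p \, dx$. For the right-hand side, I would use the growth bound \eqref{structural-reference-2} to estimate $|\langle \A(x,w,\nabla w), \nabla \varphi \rangle| \leq \Lambda |\nabla w|^{p-1} |\nabla \varphi|$, and then apply Young's inequality in the form $ab \leq \e a^{p/(p-1)} + C(\e) b^p$ with $a = |\nabla w|^{p-1}$, $b = |\nabla \varphi|$, and similarly for the $\F$-term, splitting $|\F| \, |\nabla w|$ and $|\F|\,|\nabla \varphi|$ using the conjugate exponents $\frac{p}{p-1}$ and $p$. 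Choosing $\e$ small enough (depending only on $p, n, \Lambda$) to absorb the $\int |\nabla w|^p$ terms into the left-hand side gives the claimed estimate.

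There is no serious obstacle here — this is the standard energy estimate for quasilinear equations of $p$-Laplacian type, and every ingredient is immediate from \eqref{structural-reference-1}–\eqref{structural-reference-2}. The only mild point of care is justifying that $\A(x,z,0)=0$ so that the monotonicity inequality \eqref{structural-reference-1} with $\eta = 0$ actually produces the coercivity $\langle \A(x,z,\xi),\xi\rangle \geq \Lambda^{-1}|\xi|^p$; this follows by letting $\xi \to 0$ in \eqref{structural-reference-2}. One should also make sure the constant in the final Young's inequality step depends only on $p$ (and through the absorption, on $\Lambda$), which it does. Everything else is bookkeeping with constants.
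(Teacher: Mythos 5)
Your proof is correct and follows essentially the same route as the paper: test with $w-\varphi$, extract coercivity from \eqref{structural-reference-1} with $\eta=0$ (using that $\A(x,z,0)=0$ by \eqref{structural-reference-2}), bound the remaining terms via the growth condition, and absorb with Young's inequality. The only cosmetic issue is the name clash in ``test function $\varphi = w-\varphi$,'' which you should rename.
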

\begin{proof}
 By using $w-\varphi$ as a test function, we get
 \begin{equation*}
  \int_{B_3}{\langle \A(x, w,\nabla w), \nabla w -\nabla \varphi \rangle \, dx}=\int_{B_3}{\langle \F, \nabla w -\nabla \varphi \rangle \, dx}
  \end{equation*}
which can be rewritten as
 \begin{align*}
   \int_{B_3}{\langle \A(x, w,\nabla w) -\A(x, w,0), \nabla w  \rangle  dx}
   =\int_{B_3}{\langle \A(x, w,\nabla w), \nabla \varphi \rangle  dx}
   +\int_{B_3}{\langle \F, \nabla w  \rangle  dx}
   -\int_{B_3}{\langle \F, \nabla \varphi \rangle  dx}.
 \end{align*}
On the other hand, it follows from  \eqref{structural-reference-1} that
\begin{align*}
\Lambda^{-1} \int_{B_3}{|\nabla w|^p\, dx} \leq  \int_{B_3}{\langle \A(x, w,\nabla w) -\A(x, w,0), \nabla w  \rangle \, dx}.
 \end{align*}
 Therefore, we obtain 
 \begin{align*}
  \Lambda^{-1} \int_{B_3}{|\nabla w|^p\, dx}
 % &\leq  %\int_{B_3}%{\langle %\A(x,w,\nabla %w), \nabla %\varphi \rangle %\, dx}
%    +\int_{B_3}%{\langle %\F, \nabla w %\rangle \, dx}
%   -\int_{B_3}%{\langle %\F, \nabla %\varphi \rangle %\, dx}  \\
%   & 
\leq  \Lambda \int_{B_3}{ |\nabla w|^{p-1}|\nabla \varphi|\,dx}
  +\int_{B_3}{|\F| |\nabla w|\, dx}
  +\int_{B_3}{|\F| |\nabla \varphi|\, dx}.
 \end{align*}
% where we have used Sobolev's inequality in the last %estimate.
 From this and by applying Young's inequality, we deduce the conclusion of the lemma. 
\end{proof}

\begin{lemma}\label{W^{1,p}-est} Assume that $\A:B_4\times \overline\K\times \R^n\longrightarrow \R^n$ satisfies \eqref{structural-reference-1} and \eqref{structural-reference-2}. Let  $u\in  W_{loc}^{1,p}(B_4)$  be a weak solution  of  
\begin{equation}\label{EQ-B4}
\div \A(x,  u,\nabla u) =   \div \F  \quad \text{in}\quad B_4.
\end{equation}
 Then 
 \begin{equation*}
\int_{B_3}{ |\nabla u|^p \,dx }
\leq C(p,n,\Lambda) \, \Big(\int_{B_4}{|u|^p\,  dx } +\int_{B_4}{|\F|^{\frac{p}{p-1}}\, dx} \Big). 
 \end{equation*}
 \end{lemma}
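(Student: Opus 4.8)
The plan is to use the standard Caccioppoli inequality for \eqref{EQ-B4}, testing against $u$ multiplied by a suitable cutoff function. First I would fix a cutoff $\phi \in C_c^\infty(B_4)$ with $0 \le \phi \le 1$, $\phi \equiv 1$ on $B_3$, and $|\nabla \phi| \le C(n)$; more precisely one should take $\phi^p$ as the weight so that the gradient terms combine cleanly, given the $p$-growth. Using $\varphi = u\phi^p \in W_0^{1,p}(B_4)$ as a test function in the weak formulation of \eqref{EQ-B4} gives
\begin{equation*}
\int_{B_4} \langle \A(x,u,\nabla u), \nabla u \rangle \phi^p \, dx = -\,p\int_{B_4} \langle \A(x,u,\nabla u), \nabla \phi \rangle u\, \phi^{p-1}\, dx + \int_{B_4} \langle \F, \nabla(u\phi^p)\rangle\, dx.
\end{equation*}

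Next I would bound the left-hand side from below using ellipticity and the right-hand side from above using the growth bound \eqref{structural-reference-2}. For the lower bound, \eqref{structural-reference-1} with $\eta = 0$ gives $\langle \A(x,u,\nabla u),\nabla u\rangle = \langle \A(x,u,\nabla u) - \A(x,u,0), \nabla u\rangle \ge \Lambda^{-1}|\nabla u|^p$ (note $\A(x,z,0)=0$ follows from \eqref{structural-reference-2}). For the upper bound, \eqref{structural-reference-2} yields $|\A(x,u,\nabla u)| \le \Lambda |\nabla u|^{p-1}$, so the first right-hand term is controlled by $C\int |\nabla u|^{p-1}|\nabla\phi|\,|u|\,\phi^{p-1}\,dx$, and the $\F$ term expands into one piece with $\nabla u\,\phi^p$ and one with $u\,\phi^{p-1}\nabla\phi$. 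Then I apply Young's inequality repeatedly to absorb every occurrence of $|\nabla u|^p\phi^p$ (appearing with a small constant) into the left-hand side: terms of the form $|\nabla u|^{p-1}|\nabla\phi|\,|u|$ split as $\e|\nabla u|^p\phi^p + C_\e |u|^p|\nabla\phi|^p$, and $|\F||\nabla u|\phi^p$ splits as $\e|\nabla u|^p\phi^p + C_\e|\F|^{p/(p-1)}\phi^p$. After absorption and using $\phi \equiv 1$ on $B_3$, $|\nabla\phi|\le C(n)$, $\phi \le 1$ and $\mathrm{supp}\,\phi \subset B_4$, this produces exactly
\begin{equation*}
\int_{B_3} |\nabla u|^p\, dx \le C(p,n,\Lambda)\Big(\int_{B_4} |u|^p\, dx + \int_{B_4} |\F|^{p/(p-1)}\, dx\Big).
\end{equation*}

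One technical point deserving care is that $u$ is only in $W^{1,p}_{\mathrm{loc}}(B_4)$, not globally in $W^{1,p}(B_4)$, so $u\phi^p$ is a priori only in $W^{1,p}_{\mathrm{loc}}$; however, since $\phi$ is compactly supported in $B_4$, the product $u\phi^p$ does lie in $W_0^{1,p}(B_4)$, so it is a legitimate test function and no approximation or truncation argument is needed. The main (and really only mild) obstacle is simply the bookkeeping in the Young's inequality step: one must choose the several small parameters $\e$ so that the total coefficient of $\int_{B_4}|\nabla u|^p\phi^p\,dx$ on the right is strictly less than $\Lambda^{-1}$, which is always possible since finitely many absorptions are involved and the constant $\Lambda^{-1}$ from \eqref{structural-reference-1} is fixed and positive. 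This is entirely routine, so I would not belabor it in the write-up.
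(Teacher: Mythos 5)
Your proposal is correct and is essentially identical to the paper's proof: the paper also tests the equation against $\varphi^p u$ with a cutoff equal to $1$ on $B_3$, uses \eqref{structural-reference-1} (noting $\A(x,z,0)=0$ from \eqref{structural-reference-2}) for the lower bound, \eqref{structural-reference-2} for the upper bound, and absorbs via Young's inequality.
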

\begin{proof}
Let  $\varphi\in C_0^\infty(B_4)$ be the standard nonnegative cut-off function which is $1$ 
on $B_3$.  Then, by 
multiplying the equation by $\varphi^p u$ 
and using  integration by parts
  we get
\begin{align*}
  \int_{B_4}{\langle \A(x, u,\nabla u)-\A(x, u, 0), \nabla u  \rangle \varphi^p \, dx}
 & =-p \int_{B_4}{\langle \A(x, u,\nabla u), \nabla \varphi  \rangle \varphi^{p-1} u \, dx}\\
  &\quad + \int_{B_4} \langle \F, \nabla u\rangle \varphi^p\, dx +
  p\int_{B_4} \langle \F, \nabla \varphi\rangle \varphi^{p-1} u\, dx.
 \end{align*}
 Therefore, it follows from  \eqref{structural-reference-1} and \eqref{structural-reference-2} that
 \begin{align*}
  \Lambda^{-1} \int_{B_4}{|\nabla u|^p \varphi^p \, dx} 
  \leq  p \Lambda \int_{B_4}{ |\nabla u|^{p-1}  |\nabla \varphi |   \varphi^{p-1}  |u|  \, dx}
  + \int_{B_4} |\F| | \nabla u| \varphi^p\, dx +
  p\int_{B_4}  |\F| |\nabla \varphi| \varphi^{p-1} |u|\, dx.
 \end{align*}
This together with Young's inequality yields 
   the lemma. 
\end{proof}
We end the section with a result giving a bound on the  $L^p$-norm of the difference between two gradients of weak solutions.
\begin{lemma}\label{lm:bound-solution}
Assume $\A:B_4\times \overline\K\times \R^n\longrightarrow \R^n$ and  $\ba:B_3\times \overline\K\times \R^n\longrightarrow \R^n$ satisfy \eqref{structural-reference-1} and \eqref{structural-reference-2}. Let $u\in  W_{loc}^{1,p}(B_4)$
 be a weak solution  of  \eqref{EQ-B4}
%\begin{equation*}
%\div \A(x,  u,\nabla u) =   \div(|\F|^{p-2}\F) + \f %\quad \text{in}\quad B_4,
%\end{equation*}
and $v\in  W^{1,p}(B_3)$   be a weak solution of
\begin{equation*}
\left \{
\begin{array}{lcll}
\div \ba(x, v,\nabla v) &=&  0 \quad &\text{in}\quad B_3, \\
v & =& u\quad &\text{on}\quad \partial B_3.
\end{array}\right.
\end{equation*}
Then 
\begin{equation}\label{difference-gradients}
 \int_{B_3}{|\nabla u -\nabla v|^p\, dx}\leq C(p,n,\Lambda)\int_{B_4} \big( |u|^p + |\F|^{\frac{p}{p-1}} \big) \, dx.
\end{equation} Moreover,
\begin{equation}\label{v-L^p-est}
\int_{B_3}{|v|^p\, dx}\leq C(p,n,\Lambda)\int_{B_4} \big( |u|^p + |\F|^{\frac{p}{p-1}} \big) \, dx.
\end{equation}
\end{lemma}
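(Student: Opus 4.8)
The plan is to deduce both \eqref{difference-gradients} and \eqref{v-L^p-est} from the two basic $L^p$-bounds already at our disposal, namely Lemma~\ref{W^{1,p}-est} and Proposition~\ref{pro:gradient-est}, together with elementary inequalities; no use of monotonicity beyond what is already packaged into those two results is needed, and there is no need to distinguish the cases $p\ge 2$ and $1<p<2$. First I would observe that since $\overline{B_3}\subset B_4$ the function $u$ belongs to $W^{1,p}(B_3)$, so Lemma~\ref{W^{1,p}-est} gives
\[
\int_{B_3}|\nabla u|^p\,dx \le C(p,n,\Lambda)\int_{B_4}\big(|u|^p + |\F|^{\frac{p}{p-1}}\big)\,dx .
\]
Next I would apply Proposition~\ref{pro:gradient-est} to the vector field $\ba$ (which satisfies \eqref{structural-reference-1}--\eqref{structural-reference-2} on $B_3$), the solution $v$, right-hand side $\F\equiv 0$, and boundary datum $\varphi=u$; this is legitimate because $u\in W^{1,p}(B_3)$ and $v-u\in W_0^{1,p}(B_3)$. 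It yields $\int_{B_3}|\nabla v|^p\,dx\le C(p,n,\Lambda)\int_{B_3}|\nabla u|^p\,dx$, and combining with the previous display,
\[
\int_{B_3}|\nabla v|^p\,dx \le C(p,n,\Lambda)\int_{B_4}\big(|u|^p + |\F|^{\frac{p}{p-1}}\big)\,dx .
\]

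To get \eqref{difference-gradients} I would simply use the elementary bound $|\nabla u-\nabla v|^p\le 2^{p-1}\big(|\nabla u|^p+|\nabla v|^p\big)$ and integrate over $B_3$, then invoke the two displays above. For \eqref{v-L^p-est}, since $v-u\in W_0^{1,p}(B_3)$ the Poincar\'e inequality gives $\|v-u\|_{L^p(B_3)}\le C(n,p)\,\|\nabla v-\nabla u\|_{L^p(B_3)}$; then I would write $\|v\|_{L^p(B_3)}\le \|u\|_{L^p(B_3)}+\|v-u\|_{L^p(B_3)}$, bound $\|u\|_{L^p(B_3)}\le\|u\|_{L^p(B_4)}$, raise to the $p$-th power, and apply \eqref{difference-gradients} to conclude.

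There is essentially no obstacle here: the argument is a routine chaining of the two auxiliary estimates with the triangle and Poincar\'e inequalities. The only points that warrant an explicit word of justification are that $u\in W^{1,p}(B_3)$, so that it is an admissible boundary datum in Proposition~\ref{pro:gradient-est}, and that $v-u\in W_0^{1,p}(B_3)$, so that the Poincar\'e inequality is applicable; all dependence of the constants on the structure of the equations is absorbed into Lemma~\ref{W^{1,p}-est} and Proposition~\ref{pro:gradient-est}.
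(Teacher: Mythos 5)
Your proposal is correct, but it reaches \eqref{difference-gradients} by a genuinely different and more elementary route than the paper. The paper tests the two equations with $u-v$, forms the monotone quantity $J=\int_{B_3}\langle \ba(x,v,\nabla u)-\ba(x,v,\nabla v),\nabla u-\nabla v\rangle\,dx$, bounds it above via the growth condition \eqref{structural-reference-2} and below via \eqref{structural-reference-1} (invoking Lemma~\ref{simple-est} to handle $1<p<2$), and only then applies Young's inequality and Lemma~\ref{W^{1,p}-est}. You instead bypass monotonicity entirely: you bound $\|\nabla u\|_{L^p(B_3)}$ by Lemma~\ref{W^{1,p}-est}, bound $\|\nabla v\|_{L^p(B_3)}$ by $\|\nabla u\|_{L^p(B_3)}$ via Proposition~\ref{pro:gradient-est} with $\F\equiv 0$ (an application the paper itself makes later, in the proof of Lemma~\ref{lm:compare-solution}), and conclude with the crude inequality $|\nabla u-\nabla v|^p\le 2^{p-1}(|\nabla u|^p+|\nabla v|^p)$. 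Since the lemma only claims an $O(1)$ bound on the difference of gradients — not any smallness — this loses nothing here, and your justifications of the two admissibility points ($u\in W^{1,p}(B_3)$ because $\overline{B_3}\subset B_4$ and $\nabla u\in L^p(B_3)$ by Lemma~\ref{W^{1,p}-est}; $v-u\in W_0^{1,p}(B_3)$ for Poincar\'e) are exactly what is needed. What the paper's heavier computation buys is that it is the prototype for Lemma~\ref{lm:compare-gradient-solution}, where the same testing argument must produce genuine smallness of $\nabla u-\nabla v$ and a triangle-inequality shortcut would be useless; for the present lemma your argument is a perfectly valid simplification. Your proof of \eqref{v-L^p-est} coincides with the paper's.
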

\begin{proof}
By
using $ u-v$ as a test function in the equations for $u$ and $v$, we get
\begin{align*}
-\int_{B_3}{ \langle  \ba(x, v,\nabla v), \nabla u - \nabla v\rangle   dx }
=-\int_{B_3}{ \langle  \A(x,  u, \nabla u) , \nabla u - \nabla v\rangle \, dx } +\int_{B_3}{\langle \F, \nabla u - \nabla v\rangle  dx}.
\end{align*}
This gives
\begin{align*}
 J &:=\int_{B_3}{ \langle  \ba(x, v , \nabla u) -\ba(x, v,\nabla v), \nabla u - \nabla v \rangle \, dx }\\
 &=\int_{B_3}{ \langle  \ba(x,  v, \nabla u) - \A(x, u,\nabla u), \nabla u - \nabla v \rangle \, dx}
 +\int_{B_3}{\langle \F, \nabla u - \nabla v\rangle \, dx}.
\end{align*}
It follows  from this and  \eqref{structural-reference-2} that
\begin{align*}
J\leq 2 \Lambda \int_{B_3}{ |\nabla u|^{p-1} | \nabla u - \nabla v|  \, dx}
+\int_{B_3}{|\F| |\nabla u - \nabla v| \, dx}.\nonumber
\end{align*}
Moreover,   Lemma~\ref{simple-est} and \eqref{structural-reference-1} imply that
\[
c \int_{B_3} |\nabla u -\nabla  v|^p \, dx 
- c^{-1}\int_{B_3} |\nabla u|^p \, dx
 \leq J.
\]
Therefore,  we conclude that
\begin{align*}
c \int_{B_3} |\nabla u -\nabla  v|^p \, dx 
\leq  c^{-1}\int_{B_3}  |\nabla u|^p \, dx +  2\Lambda \int_{B_3}{ |\nabla u|^{p-1} | \nabla u - \nabla v|   \, dx}
+ \int_{B_3}{|\F| |\nabla u - \nabla v|  \, dx}.
\end{align*}
We infer from this and  Young's inequality  that 
\begin{equation*}
 \int_{B_3}{|\nabla u -\nabla v|^p\, dx}\leq C(p,n,\Lambda)\int_{B_3} \big( |\nabla u|^p + |\F|^{\frac{p}{p-1}} \big) \, dx.
\end{equation*}
This together with Lemma~\ref{W^{1,p}-est}  yields \eqref{difference-gradients}.
On the other hand,   \eqref{v-L^p-est} is a consequence of \eqref{difference-gradients} and the estimate
\begin{align*}
 \int_{B_3} |v|^p \, dx 
 \leq 2^{p-1}\Big[\int_{B_3}|u|^p \, dx +\int_{B_3}|u-v|^p \, dx \Big]\leq C(p,n)\Big[\int_{B_3}|u|^p \, dx + \int_{B_3}|\nabla u-\nabla v|^p \, dx\Big].
\end{align*}
 \end{proof}

\section{Approximating solutions }\label{approximation-solution}
The goal of this section is to  prove a result allowing us to  compare solutions originating from two different
equations.

\subsection{Strong compactness of the class $\G$ of vector fields}

In this subsection we give some elementary arguments showing that 
the class of  vector fields $\G_{B_3}( \eta)$ is relatively compact with respect to the pointwise  convergence. Let us first recall the sequential Bocce criterion in \cite{BGJ}.

\begin{definition}\label{dfn:Bocce}
We say that a sequence $\{f_k\}$ in $L^1(B_3;\R^n)$ satisfies the sequential
Bocce criterion if for each subsequence $\{f_{k_j}\}$ of $\{f_k\}$, each $\epsilon>0$ and each measurable set $E\subset B_3$ with $|E|>0$,
there exists a  measurable set $A\subset E$ with $|A|>0$ such that 
\begin{equation}\label{Bocce}
\liminf_{j\to \infty} \fint_{A} |f_{k_j}(x) - (f_{k_j})_{A}| \, dx <\epsilon. 
\end{equation}
\end{definition}
The following result is a special case of \cite[Theorem 2.3]{BGJ}.

\begin{theorem}\label{thm:L^1-conv}{(Theorem 2.3 in \cite{BGJ})}
Let $\{f_k\}$ be a sequence in $L^1(B_3;\R^n)$.  Then $\{f_k\}$ converges strongly to $f$ in $L^1(B_3;\R^n)$ if and only if
\begin{itemize}
\item[(1)] $\{f_k\}$ converges weakly to $f$ in $L^1(B_3;\R^n)$.
\item[(2)]  $\{f_k\}$ satisfies the sequential Bocce criterion. 
\end{itemize}
\end{theorem}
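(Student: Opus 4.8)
The plan is to prove the two implications separately, with the ``only if'' direction being routine and the ``if'' direction carrying the real content.

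\emph{The easy direction.} Suppose $f_k\to f$ in $L^1(B_3;\R^n)$. Weak convergence is immediate. For the sequential Bocce criterion, given a subsequence $\{f_{k_j}\}$, an $\epsilon>0$ and a set $E$ with $|E|>0$, I would use Lusin's theorem together with the Lebesgue density theorem to extract $A\subset E$ with $|A|>0$ on which $f$ stays within $\epsilon/4$ (in the essential-supremum sense) of a constant $c\in\R^n$. The elementary bound $\fint_A|g-g_A|\le 2\fint_A|g-c|$ then gives $\fint_A|f_{k_j}-(f_{k_j})_A|\le \tfrac{2}{|A|}\|f_{k_j}-f\|_{L^1}+\tfrac{\epsilon}{2}$, whose $\liminf$ is strictly below $\epsilon$.

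\emph{The main direction.} Assume $f_k\rightharpoonup f$ weakly in $L^1$ and that the sequential Bocce criterion holds. First I would record that, by the Dunford--Pettis theorem, $\{f_k\}$ is uniformly integrable. Arguing by contradiction, suppose $f_k\not\to f$ in $L^1$; passing to a subsequence (both the Bocce criterion and weak convergence to $f$ are inherited by subsequences) I may assume $\|f_k-f\|_{L^1}\ge\alpha>0$ for all $k$. Set $h_k:=f_k-f$, so $h_k\rightharpoonup 0$, $\{h_k\}$ (hence also $\{h_k^{\pm}\}$) is uniformly integrable, and $\|h_k\|_{L^1}\ge\alpha$. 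By Dunford--Pettis and Eberlein--\v Smulian, pass to a further subsequence with $h_k^{+}\rightharpoonup g^{+}$ and $h_k^{-}\rightharpoonup g^{-}$ weakly in $L^1$. Testing against indicator functions gives $g^{+}=g^{-}=:g\ge 0$ and $2\int_{B_3}g=\lim_k\|h_k\|_{L^1}\ge\alpha$, so $g$ does not vanish a.e.; fix $\epsilon_0>0$ with $E_0:=\{g\ge\epsilon_0\}$ of positive measure.

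\emph{The crux.} It remains to show that $E_0$, after a slight shrinking, defeats the Bocce criterion. For any $A$ with $|A|>0$, the reverse triangle inequality yields $\fint_A|h_k-(h_k)_A|\ge \tfrac{2}{|A|}\min\{\int_A h_k^{+},\int_A h_k^{-}\}$, and letting $k\to\infty$ along the subsequence gives $\liminf_k \fint_A|h_k-(h_k)_A|\ge 2\fint_A g$, which is $\ge 2\epsilon_0$ whenever $A\subset E_0$. Since $f_k-(f_k)_A=(h_k-(h_k)_A)+(f-f_A)$, this produces $\liminf_k\fint_A|f_k-(f_k)_A|\ge 2\epsilon_0-\fint_A|f-f_A|$ for every $A\subset E_0$. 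To neutralize the last term \emph{uniformly in} $A$, I would again invoke Lusin's theorem: taking the exceptional set small enough and choosing a density point $x_0$ of $E_0$ inside the set where $f$ agrees with a continuous function, I can pass to $E'\subset E_0$ of positive measure on which $|f-f(x_0)|<\epsilon_0/4$ essentially; then $\fint_A|f-f_A|<\epsilon_0/2$ for every $A\subset E'$, so $\liminf_k\fint_A|f_k-(f_k)_A|\ge \tfrac{3}{2}\epsilon_0$ for all such $A$. This contradicts the sequential Bocce criterion applied to the subsequence $\{f_k\}$, the number $\epsilon_0$, and the set $E'$, and therefore $f_k\to f$ in $L^1$. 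The delicate point throughout is exactly this last step: converting the non-vanishing of the weak limit $g$ of $|h_k|$ (which encodes the oscillation/cancellation forced by $h_k\rightharpoonup 0$ with $\|h_k\|_{L^1}$ bounded below) into a lower bound for the Bocce quantities that is uniform over all $A\subset E'$, while simultaneously killing the $f-f_A$ contribution via the reduction to a set where $f$ is nearly constant.
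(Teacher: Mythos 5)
The paper does not prove this statement: it is quoted verbatim as Theorem~2.3 of the cited reference \cite{BGJ}, so there is no in-paper argument to compare against. Your proposal is a genuine, essentially complete self-contained proof, and the key steps all check out. The forward direction is fine (the inequality $\fint_A|g-g_A|\le 2\fint_A|g-c|$ and the Lusin/density reduction to a set where $f$ is nearly constant do the job). In the converse, the central inequality $\int_A|h-h_A|\ge 2\min\{\int_A h^+,\int_A h^-\}$ is correct: writing $m=h_A$ one has $\int_A|h-m|=2\int_A(h-m)^+=2\int_A(h-m)^-$, and depending on the sign of $m$ one of these dominates $2\int_A h^+$ or $2\int_A h^-$; passing to the limit with $h_k^{\pm}\rightharpoonup g$ gives the uniform lower bound $2\fint_A g\ge 2\epsilon_0$ on $A\subset E_0$, and the second Lusin step makes the $f-f_A$ correction uniformly small over all $A\subset E'$, so the contradiction with the Bocce criterion (applied to the final subsequence, which is legitimately a subsequence of the original) goes through. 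The one point you should patch is that the decomposition $h_k=h_k^{+}-h_k^{-}$ and the inequality above only make sense for scalar-valued functions, whereas the theorem is stated for $L^1(B_3;\R^n)$. This is harmless: if $\|f_k-f\|_{L^1}\ge\alpha$ along a subsequence, then some fixed component satisfies $\|f_k^{(i)}-f^{(i)}\|_{L^1}\ge\alpha/n$ along a further subsequence, weak convergence passes to components, and the sequential Bocce criterion for $\{f_k\}$ implies it for $\{f_k^{(i)}\}$ since $|f_{k}^{(i)}-(f_{k}^{(i)})_A|\le|f_{k}-(f_{k})_A|$ pointwise; one then runs your scalar argument on that component. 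With that one-line reduction inserted, the proof is complete.
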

An application of Theorem~\ref{thm:L^1-conv} gives: 

\begin{lemma}\label{lm:sufficient-condition} Let $\eta: [0,\infty)\to \R$ be a function satisfying $\lim_{r\to 0^+}{\eta(r)}=0$.
Suppose $\{f_k\}$ converges weakly to $f$ in $L^1(B_3;\R^n)$, and
\[
\sup_{k} \sup_{x_0:\, B_r(x_0)\subset B_3 }\fint_{B_r(x_0)} |f_k(x) - (f_k)_{B_r(x_0)}|\, dx\leq \eta(r)
\]
for all $r>0$ sufficiently small. Then
$\{f_k\}$ converges strongly to $f$ in $L^1(B_3;\R^n)$. 
\end{lemma}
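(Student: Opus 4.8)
The statement to prove is Lemma~\ref{lm:sufficient-condition}: given that $\{f_k\}$ converges weakly to $f$ in $L^1(B_3;\R^n)$ and has a uniform (in $k$) bound on its mean oscillations over small balls, one concludes strong $L^1$ convergence. By Theorem~\ref{thm:L^1-conv}, hypothesis (1) there is exactly our weak convergence assumption, so the entire task reduces to verifying that $\{f_k\}$ satisfies the sequential Bocce criterion of Definition~\ref{dfn:Bocce}. That is the only thing I would need to establish.

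The plan is as follows. Fix an arbitrary subsequence $\{f_{k_j}\}$, an arbitrary $\epsilon>0$, and an arbitrary measurable set $E\subset B_3$ with $|E|>0$. I need to produce a measurable subset $A\subset E$ with $|A|>0$ for which $\liminf_{j\to\infty}\fint_A |f_{k_j}-(f_{k_j})_A|\,dx<\epsilon$. The natural candidate for $A$ is a small ball: by the Lebesgue density theorem, almost every point of $E$ is a density point, so I can pick $x_0\in E$ and then a radius $r>0$ small enough that (i) $B_r(x_0)\subset B_3$, (ii) the oscillation bound $\fint_{B_r(x_0)}|f_k-(f_k)_{B_r(x_0)}|\,dx\le\eta(r)$ holds uniformly in $k$, and (iii) $\eta(r)<\epsilon/2$, which is possible since $\eta(r)\to 0$ as $r\to 0^+$. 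At the same time, I want $|B_r(x_0)\cap E|>0$; choosing $x_0$ to be a density point of $E$ guarantees $|B_r(x_0)\cap E|/|B_r(x_0)|\to 1$, so in particular $|B_r(x_0)\cap E|>0$ for all small $r$. There is a minor tension: the Bocce criterion asks for $A\subset E$, but the oscillation bound is over the full ball $B_r(x_0)$, not over $B_r(x_0)\cap E$. I would resolve this by taking $A:=B_r(x_0)\cap E$ and estimating $\fint_A |f_{k_j}-(f_{k_j})_A|\,dx$ in terms of $\fint_{B_r(x_0)}|f_{k_j}-(f_{k_j})_{B_r(x_0)}|\,dx$: using that $(f_{k_j})_A$ is the best constant approximation in $L^1(A)$ (so replacing it by $(f_{k_j})_{B_r(x_0)}$ only increases the integral) and then $|A|\ge c|B_r(x_0)|$ for $r$ small, one gets $\fint_A|f_{k_j}-(f_{k_j})_A|\,dx\le \frac{|B_r(x_0)|}{|A|}\fint_{B_r(x_0)}|f_{k_j}-(f_{k_j})_{B_r(x_0)}|\,dx\le \frac{1}{c}\eta(r)$. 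Shrinking $r$ further so that $\frac{1}{c}\eta(r)<\epsilon$ (note $c$ can be taken close to $1$ at a density point), the bound holds uniformly in $j$, hence certainly the $\liminf$ is $<\epsilon$. In fact this gives a bound on the $\sup_j$, which is stronger than needed.

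The main — and really the only — subtlety is the bookkeeping in the last step: one must choose the radius $r$ \emph{before} extracting any information from the subsequence (since $\eta$ and the density of $E$ at $x_0$ do not depend on $j$), and one must make sure the constant $c$ relating $|A|$ to $|B_r(x_0)|$ is controlled uniformly — this is where the density-point choice of $x_0$ is essential, as it lets us take $|A|\ge\frac12|B_r(x_0)|$, say, for all sufficiently small $r$. Everything else is routine: the reduction to the Bocce criterion is immediate from Theorem~\ref{thm:L^1-conv}, and no quantitative use of the weak convergence is needed beyond invoking it as hypothesis (1). I do not anticipate any genuine obstacle; the proof is short once the ball-versus-$E$ discrepancy is handled as above.
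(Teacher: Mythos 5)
Your proposal follows essentially the same route as the paper: reduce to the sequential Bocce criterion via Theorem~\ref{thm:L^1-conv}, pick a Lebesgue density point $x_0$ of $E$, take $A=E\cap B_r(x_0)$, and transfer the oscillation bound from the ball to $A$ using $|A|\geq \tfrac12|B_r(x_0)|$ for small $r$. The structure and the choice of $A$ are exactly those of the paper's proof.

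One step of your argument is stated incorrectly, though it is easily repaired. You claim that $(f_{k_j})_A$ is the best constant approximation to $f_{k_j}$ in $L^1(A)$, so that replacing it by $(f_{k_j})_{B_r(x_0)}$ "only increases the integral." This is false: the minimizer of $c\mapsto\fint_A|g-c|\,dx$ is a median of $g$ on $A$, not its mean (e.g.\ $g$ equal to $0$ on $99\%$ of $A$ and to $100$ on the rest). What is true, and suffices here, is the quasi-optimality of the mean: for any constant $c$,
\begin{equation*}
\fint_A|g-(g)_A|\,dx\;\leq\;\fint_A|g-c|\,dx+\big|(g)_A-c\big|\;\leq\;2\fint_A|g-c|\,dx,
\end{equation*}
since $|(g)_A-c|=\big|\fint_A(g-c)\big|\leq\fint_A|g-c|$. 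With $c=(f_{k_j})_{B_r(x_0)}$ this gives $\fint_A|f_{k_j}-(f_{k_j})_A|\leq 2\frac{|B_r(x_0)|}{|A|}\eta(r)\leq 4\eta(r)$ for small $r$, uniformly in $j$, and the conclusion follows as you describe. (The paper instead keeps the single triangle-inequality term $\big|(f_k)_{B_r(x_0)}-(f_k)_{A_r}\big|$ and disposes of it by letting $k\to\infty$ and using the weak convergence together with the Lebesgue point property of $f$; your fixed version is marginally cleaner in that it needs no quantitative use of the weak convergence inside the Bocce verification.) With that one correction your proof is complete.
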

 \begin{proof}
 By Theorem~\ref{thm:L^1-conv}, it is enough to check that $\{f_k\}$ satisfies the sequential Bocce criterion. For this, let $\epsilon>0$  and let $E\subset B_3$ be a measurable set with $|E|>0$. Then by the Lebesgue differentiation theorem, there exists $x_0\in E$ such that 
 \begin{equation}\label{density-point}
 \lim_{r\to 0^+}\frac{|E\cap B_{r}(x_0)|}{|B_r(x_0)|}= 1\quad \mbox{and}\quad  \lim_{r\to 0^+}\fint_{B_r(x_0)} f=\lim_{r\to 0^+}\fint_{B_r(x_0)} f\chi_E =f(x_0).
 \end{equation}
For all $r>0$ small, we have with $A_r := E\cap B_{r}(x_0)$ that
 \begin{align*}
 \fint_{A_r} |f_{k}(x) - (f_{k})_{A_r}| \, dx
 &\leq \fint_{A_r} |f_{k}(x) - (f_k)_{B_r(x_0)}| \, dx +\Big|\fint_{B_r(x_0)} f_k - \fint_{A_r} f_k \Big|\\
 &\leq \frac{|B_r(x_0)|}{|A_r|} \eta(r) +\Big|\fint_{B_r(x_0)} f_k - \fint_{A_r} f_k \Big|\quad \forall k.
 \end{align*}
 It follows by taking $k\to \infty$ and using the weak convergence of  $\{f_k\}$ to $f$ that
 \begin{align*}
 \limsup_{k\to\infty}\fint_{A_r} |f_{k}(x) - (f_{k})_{A_r}| \, dx
 &\leq  \frac{|B_r(x_0)|}{|A_r|} \eta(r) +\Big|\fint_{B_r(x_0)} f - \fint_{A_r} f \Big|\\
 &= \frac{|B_r(x_0)|}{|A_r|} \eta(r) +\Big|\fint_{B_r(x_0)} f - \frac{|B_r(x_0)|}{|A_r|}\fint_{B_r(x_0)} f\chi_E \Big|\quad \forall r>0\mbox{ small}.
 \end{align*}
 Thanks to \eqref{density-point} and the assumption $\lim_{r\to 0^+}{\eta(r)}=0$, we can choose $r>0$ sufficiently small  such that the above  right hand side is less than $\epsilon$. Thus $\{f_k\}$ satisfies the sequential Bocce criterion and the proof is complete.
 \end{proof}
 
The strong compactness of  $\G_{B_3}( \eta)$ is given by the next result. This  technical lemma will be used in Subsection~\ref{sub:approximation-solu}.

\begin{lemma}\label{coeff-compactness}
For each positive integer  $k$, let $\ba_k: B_3\times \overline\K\times \R^n \to \R^n$ be a vector field satisfying conditions 
\eqref{structural-reference-2}--\eqref{structural-reference-3} and  ({\bf H1})--({\bf H2}).
%for a.e. $x\in B_3$. 
 Then there exist 
a subsequence still denoted by $\{\ba_k\}$ and a vector field $\ba: B_3\times \overline\K\times \R^n \to \R^n$ such that
\[
\ba_k(x,z,\xi) \to \ba(x,z, \xi)\quad \mbox{for a.e. $x\in B_3$ and for all }\, (z,\xi)\in \overline\K\times \R^n.
\] Moreover,  $\ba$ is continuous in the $\xi$ variable. 
\end{lemma}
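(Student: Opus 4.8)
\textbf{Proof strategy for Lemma~\ref{coeff-compactness}.}
The plan is to extract the limit vector field $\ba$ in two stages: first pin down the limit on a countable dense set of parameters $(z,\xi)$ by a diagonal argument, then upgrade to all $(z,\xi)$ using the uniform $\xi$-regularity coming from \textbf{(H1)} and the Lipschitz bound \eqref{structural-reference-3} in $z$.

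First I would fix a countable dense subset $D\subset \overline\K\times\R^n$ (for instance $(\Q\cap\overline\K)\times\Q^n$, together with countably many points clustering at $\xi=0$). For each fixed $(z,\xi)\in D$ the sequence $\{\ba_k(\cdot,z,\xi)\}$ lies in $L^1(B_3;\R^n)$ and is bounded there by \eqref{structural-reference-2}: $|\ba_k(x,z,\xi)|\le\Lambda|\xi|^{p-1}$, so in fact it is bounded in $L^\infty(B_3;\R^n)$ and hence weakly-$*$ relatively compact; after passing to a subsequence it converges weakly in $L^1(B_3;\R^n)$ to some limit. By \textbf{(H2)} the oscillation hypothesis of Lemma~\ref{lm:sufficient-condition} holds uniformly in $k$ for each fixed $z$ (with $\eta(z,\cdot)/|\xi|^{p-1}$ in place of $\eta$, noting $\lim_{r\to0^+}\eta(z,r)=0$), so that weak $L^1$ convergence automatically upgrades to strong $L^1$ convergence, and therefore, along a further subsequence, to pointwise a.e. convergence on $B_3$. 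Running a diagonal extraction over the countable set $D$, I obtain one subsequence (still denoted $\{\ba_k\}$) and a function $\ba:B_3\times D\to\R^n$ such that $\ba_k(x,z,\xi)\to\ba(x,z,\xi)$ for a.e.\ $x\in B_3$ and every $(z,\xi)\in D$; discarding a null set (a countable union of null sets), this holds for all $x$ outside a single null set $N\subset B_3$.

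Next I would extend $\ba(x,\cdot,\cdot)$ from $D$ to all of $\overline\K\times\R^n$ for $x\notin N$ by uniform continuity. The key equicontinuity estimates are: in $z$, condition \eqref{structural-reference-3} gives $|\ba_k(x,z_1,\xi)-\ba_k(x,z_2,\xi)|\le\Lambda|\xi|^{p-1}|z_1-z_2|$; in $\xi$ on any annulus $\{0<r\le|\xi|\le R\}$, the gradient bound in \textbf{(H1)}, namely $|\partial_\xi\ba_k(x,z,\xi)|\le\Lambda|\xi|^{p-2}$, integrated along segments yields a modulus of continuity depending only on $r,R,\Lambda,p,n$; and near $\xi=0$, \eqref{structural-reference-2} forces $|\ba_k(x,z,\xi)|\le\Lambda|\xi|^{p-1}\to0$ as $\xi\to0$, uniformly in $k$, $x$, $z$. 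All three are independent of $k$ and of $x$. Hence on each compact set $\overline\K'\times\overline{B_R}$ with $\overline\K'\Subset\K$ and on each annular region the family $\{\ba_k(x,\cdot,\cdot)\}_k$ is uniformly equicontinuous; the pointwise limit on the dense set $D$ therefore extends uniquely to a continuous function $\ba(x,\cdot,\cdot)$ on $\overline\K\times\R^n$ (with the value $0$ assigned at $\xi=0$, consistent with the decay bound), and a standard three-$\epsilon$ argument shows $\ba_k(x,z,\xi)\to\ba(x,z,\xi)$ for every $(z,\xi)\in\overline\K\times\R^n$ and every $x\notin N$. By construction $\ba(x,\cdot,\cdot)$ is continuous in $\xi$, which is the asserted property.

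I expect the main technical obstacle to be the behavior near $\xi=0$: the gradient bound in \textbf{(H1)} degenerates there when $1<p<2$ (it blows up) and gives nothing when $p>2$, so equicontinuity in $\xi$ cannot be obtained from \textbf{(H1)} alone on a full neighborhood of the origin. The remedy is exactly the $L^\infty$ control \eqref{structural-reference-2}, which pins all the $\ba_k(x,z,\xi)$ (and hence the limit) to be small near $\xi=0$ uniformly, so the origin is handled separately from the annuli and then the pieces are glued. A secondary point to be careful about is bookkeeping the null sets: each fixed $(z,\xi)\in D$ produces its own exceptional null set, but since $D$ is countable their union is still null, and the equicontinuity estimates are genuinely pointwise in $x$ (they hold for a.e.\ $x$, not merely in an integrated sense), so the extension argument is valid for all $x$ outside this single null set $N$.
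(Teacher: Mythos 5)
Your proposal is correct and follows essentially the same route as the paper: a diagonal extraction over a countable dense set of $(z,\xi)$, using weak-$*$ $L^\infty$ compactness upgraded to a.e.\ convergence via the Bocce-criterion lemma and ({\bf H2}), followed by an extension to all $(z,\xi)$ through the $k$- and $x$-uniform continuity estimates coming from \eqref{structural-reference-3} in $z$, from ({\bf H1}) in $\xi$ away from the origin, and from \eqref{structural-reference-2} near $\xi=0$ (the paper packages the last two into a single ``uniform modulus'' claim of the form $C\max\{|\xi_m-\xi_n|,\,|\xi_m-\xi|^{p-1}+|\xi_n-\xi|^{p-1}\}$). The only slip is cosmetic: the oscillation bound for $f_k=\ba_k(\cdot,z,\xi)$ furnished by ({\bf H2}) is $|\xi|^{p-1}\eta(z,\cdot)$ rather than $\eta(z,\cdot)/|\xi|^{p-1}$, which changes nothing since either quantity vanishes as $r\to 0^{+}$.
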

\begin{proof}
We first observe the following.

{\bf Claim:} For any sequence $\{\xi_n\}\subset \R^n$ with $\xi_n\to \xi$, there exists a constant $C>0$ depending only on $\xi$, $p$ and $\Lambda$ such that 
 %\begin{equation*}
% \sup_{k} \sup_{(x,z)\in B_4\times \K}|%\ba_k(x,z,\xi_m) - \ba_k(x,z,\xi_n)|%\leq \left \{
%\begin{array}{lcll}
%C |\xi_m-\xi_n| \qquad \qquad\,\, %\text{if}\quad \xi\neq 0, \\
%C\Big( |\xi_m|^{p-1} +|\xi_n|^{p-1} %\Big) \quad\text{if}\quad \xi=0
%\end{array}\right.
%\end{equation*}
%for all $n$ and $m$ sufficiently %large. In particular, we have
\begin{equation}\label{uniform-modulus}
\sup_{k} \sup_{(x,z)\in B_3\times \overline\K}|\ba_k(x,z,\xi_m) - \ba_k(x,z,\xi_n)|
\leq C \max{\big\{|\xi_m -\xi_n|,\, |\xi_m -\xi|^{p-1} +|\xi_n -\xi|^{p-1}\big\}} 
\end{equation}
for all $m$ and $n$ sufficiently large.
 Since the case $\xi=0$ is obvious from \eqref{structural-reference-2}, we only need to prove the claim for the case $\xi\neq 0$. Then there exists $N_0\in \N$ such that $\xi_k\in B(\xi, \frac{|\xi|}{2})$ for all $k\geq N_0$. Hence we get from the mean value property and ({\bf H1}) that 
\begin{align*}
|\ba_k(x,z,\xi_m) - \ba_k(x,z,\xi_n) |
&= |\partial_{\xi}\ba_k \big(x,z, \alpha \xi_m +(1-\alpha)\xi_n\big)|\, |\xi_m - \xi_n|\\
&\leq \Lambda |\alpha \xi_m +(1-\alpha)\xi_n|^{p-2}\, |\xi_m - \xi_n| 
\leq C |\xi_m - \xi_n|\quad \forall n,m\geq N_0,
\end{align*}
giving the claim.

Next let $(z,\xi)\in \overline\K\times \R^n$ be fixed and define
\[
\hat \ba_k(x) := \ba_k(x,z,\xi)\quad \mbox{for}\quad x\in B_3. 
\]
Then   $\{\hat \ba_k\}$ is bounded in $L^\infty(B_3)$ by \eqref{structural-reference-2}  and so there exists a subsequence depending on $(z,\xi)$ and $\hat \ba\in L^\infty(B_3)$  
such that
\[
\hat\ba_k \rightharpoonup \hat \ba \quad \mbox{weakly-* in }\, L^\infty(B_3;\R^n).
\]
Hence it follows from  condition ({\bf H2}) and Lemma~\ref{lm:sufficient-condition}  that $\hat\ba_k \longrightarrow \hat \ba$ strongly in $L^1(B_3;\R^n)$. Thus we can extract a further  subsequence, still denoted by $\{\hat \ba_k\}$, such that 
$\hat\ba_k(x) \to \hat \ba(x)$ for a.e. $x\in B_3$. 

Therefore, we infer from the diagonal process  that there exist a subsequence $\{\ba_k\}$ and a vector field $\ba: B_3\times (\overline\K\cap \Q)\times (\R^n \cap \Q^n)\to \R^n$ satisfying 
\[
\ba_k(x,z,\xi) \to \ba(x,z,\xi)
\]
for a.e. $x\in B_3$ and for all $(z,\xi)\in (\overline\K\cap \Q)\times (\R^n \cap \Q^n)$. We are going to show that $\ba$ admits an extension on $B_3\times \overline\K\times \R^n$ with the property 
\begin{equation}\label{desired-a.e.-conv}
\ba_k(x,z,\xi) \to \ba(x,z,\xi)
\quad \mbox{for a.e. $x\in B_3$ and for all $(z,\xi)\in \overline\K\times \R^n$}. 
\end{equation}
Let $(x,z,\xi)\in B_3\times \overline\K\times \R^n$ and take a sequence $\{(z_n,\xi_n)\}\subset (\overline\K\cap \Q)\times (\R^n \cap \Q^n)$ such that $(z_n,\xi_n) \to (z,\xi)$. By using \eqref{structural-reference-3} and \eqref{uniform-modulus} we obtain for all $m, n$  large that
\begin{align*}
|\ba_k(x, z_m, \xi_m) - \ba_k(x, z_n, \xi_n)|
&\leq  |\ba_k(x, z_m, \xi_m) - \ba_k(x, z_n, \xi_m)| +|\ba_k(x, z_n, \xi_m) - \ba_k(x, z_n, \xi_n)|\\
&\leq \Lambda |z_m - z_n| \, |\xi_m|^{p-1} +C \max{\big\{|\xi_m -\xi_n|,\, |\xi_m -\xi|^{p-1} +|\xi_n -\xi|^{p-1}\big\}} \quad \forall k.
\end{align*}
It follows by taking $k\to \infty$ that 
\begin{equation}\label{Cauchy-sequence}
|\ba(x, z_m, \xi_m)- \ba(x, z_n, \xi_n)|\leq \Lambda |z_m - z_n|\, |\xi_m|^{p-1}  +C \max{\big\{|\xi_m -\xi_n|,\, |\xi_m -\xi|^{p-1} +|\xi_n -\xi|^{p-1}\big\}}
\end{equation} 
for all $m, n$ sufficiently large.
 Thus, $\{\ba(x, z_n, \xi_n)\}$ is a Cauchy sequence in $\R^n$ and we define 
\[
\ba(x,z,\xi) := \lim_{n\to \infty} \ba(x, z_n, \xi_n).
\]
We note that this definition of $\ba(x,z,\xi)$ is independent of the choice of the sequence $\{(z_n,\xi_n)\}$. Indeed, if $\{(z_n',\xi_n')\}$ is another sequence in $(\overline\K\cap \Q)\times (\R^n \cap \Q^n)$ satisfying  $(z_n',\xi_n') \to (z,\xi)$, then by the same arguments leading to \eqref{Cauchy-sequence} we have
\begin{equation*}
|\ba(x, z_n, \xi_n)- \ba(x, z_n', \xi_n')|\leq \Lambda |z_n - z_n'|\, |\xi_n|^{p-1} +C \max{\big\{|\xi_n -\xi_n'|,\, |\xi_n -\xi|^{p-1} +|\xi_n' -\xi|^{p-1}\big\}}.
\end{equation*} 
Therefore, the convergent sequences $\{\ba(x, z_n, \xi_n)\}$ and $\{\ba(x, z_n', \xi_n')\}$ have the same limit.

Let us now verify \eqref{desired-a.e.-conv}. Let $(x,z,\xi)\in B_3\times \overline\K\times \R^n$ be arbitrary. Take $\{(z_n,\xi_n)\}\subset (\overline\K\cap \Q)\times (\R^n \cap \Q^n)$ be such that $(z_n,\xi_n) \to (z,\xi)$. Then the triangle inequality gives 
\begin{align*}
|\ba_k(x, z, \xi) - \ba(x, z, \xi)|
&\leq |\ba_k(x, z, \xi) - \ba_k(x, z, \xi_n)| +  |\ba_k(x, z, \xi_n) - \ba_k(x, z_n, \xi_n)|\\
& +|\ba_k(x, z_n, \xi_n) - \ba(x, z_n, \xi_n)| +|\ba(x, z_n, \xi_n) - \ba(x, z, \xi)|\quad \forall n.
\end{align*}
Moreover, it follows from \eqref{uniform-modulus} by letting $m\to \infty$ that
\begin{equation}\label{k-conti}
|\ba_k(x,z,\xi) - \ba_k(x,z,\xi_n)|
\leq C \max{\big\{|\xi -\xi_n|,\,|\xi_n -\xi|^{p-1}\big\}}.
\end{equation}
Thus, we deduce that 
\begin{align*}
|\ba_k(x, z, \xi) - \ba(x, z, \xi)|
&\leq C \max{\big\{|\xi -\xi_n|,\,|\xi_n -\xi|^{p-1}\big\}}  +  \Lambda |\xi_n|^{p-1} |z-z_n|\\
& +|\ba_k(x, z_n, \xi_n) - \ba(x, z_n, \xi_n)| +|\ba(x, z_n, \xi_n) - \ba(x, z, \xi)|\quad \forall n\geq N_0,
\end{align*}
where $N_0$ depends on $\xi$ but  independent of $k$. Consequently,
\begin{align*}
\limsup_{k\to\infty}|\ba_k(x, z, \xi) - \ba(x, z, \xi)|
&\leq C \max{\big\{|\xi -\xi_n|,\,|\xi_n -\xi|^{p-1}\big\}}  +  \Lambda |\xi_n|^{p-1} |z-z_n| \\
&\quad +|\ba(x, z_n, \xi_n) - \ba(x, z, \xi)|
\end{align*}
for all $n\geq N_0$. Letting $n\to \infty$, we conclude that $\ba_k(x, z, \xi) \to \ba(x, z, \xi)$ and hence \eqref{desired-a.e.-conv} holds true.

It remains to show that $\ba$ is continuous in the $\xi$ variable. To see this, let $\xi_n \to \xi$ in $\R^n$. Then \eqref{k-conti} is satisfied for all $k$ and so by letting $k$ tend to infinity and using \eqref{desired-a.e.-conv}  we obtain
\begin{equation*}
|\ba(x,z,\xi) - \ba(x,z,\xi_n)|
\leq C \max{\big\{|\xi -\xi_n|,\,|\xi_n -\xi|^{p-1}\big\}}\quad \,\mbox{ for all large } n.
\end{equation*}
Therefore for a.e. $x\in B_3$ and all $z\in \overline\K$, the vector field  $\xi\mapsto  \ba(x,z,\xi)$ is continuous on $\R^n$.
\end{proof}

\subsection{An approximation lemma}\label{sub:approximation-solu}

 We begin this subsection with a  result   needed for the proof of the approximation lemma (Lemma~\ref{lm:compare-solution}).

\begin{lemma}\label{lm:Minty–Browder}
Let $\omega: [0,\infty)\to \R$ be a function satisfying $\lim_{s\to 0^+}{\omega(s)}=\omega(0)=0$. For each $k$, let $\A_k: B_3\times \overline\K\times \R^n \to \R^n$ be such that
for a.e. $x\in B_3$ there hold
\begin{align}
&\big\langle\A_k(x,z,\xi) - \A_k(x,z,\eta), \xi - \eta
\big\rangle\geq 0 \qquad\qquad\qquad\quad \forall z\in \overline\K \, \mbox{ and }\,  \forall \xi, \eta\in\R^n,\label{ellipticity-ass}\\ 
& |\A_k(x,z,\xi)|   \leq \Lambda(1 +|\xi|^2)^{\frac{p-1}{2}} \,\,\qquad\qquad\qquad\qquad\qquad\forall (z,\xi)\in \overline\K\times\R^n,\label{growth-ass}\\
& |\A_k(x,z_1,\xi)-\A_k(x,z_2,\xi)|  \leq \omega(|z_1 - z_2|)(1 +|\xi|^2)^{\frac{p-1}{2}}\quad \forall z_1, z_2\in \overline\K \, \mbox{ and }\, \forall \xi\in\R^n. \label{z-continuity-ass}
\end{align}
Suppose in addition that $\A_k(x,z,\xi) \to \A(x,z, \xi)$ for a.e. $x\in B_3$ and for all $(z,\xi)\in \overline\K\times \R^n$, where $\A: B_3\times \overline\K\times \R^n \to \R^n$ is continuous in the $\xi$ variable. 
Let  $u^k\in W^{1,p}(B_3)$ be a weak solution to 
\begin{equation}\label{k-equation}
\div \A_k(x, m^k,  \nabla  u^k)=\div  \F_k\quad \mbox{in}\quad B_3
\end{equation}
with  $m^k\in L^1(B_3)$  satisfying $m^k(x)\in \overline{\K}\,$ for a.e. $x\in B_3$.
Assume that $u^k\to u$ strongly in $L^p(B_3)$, $\nabla  u^k \rightharpoonup \nabla u$ weakly in $L^p(B_3)$,  $m^k\to m$ a.e. in $B_3$, $\F_k \to 0$ strongly in $L^{\frac{p}{p-1}}(B_3; \R^n)$, 
%$\f_k \to 0$ strongly in $L^{\frac{p}{p-1}}(B_3;
% \R^n)$, 
and
\begin{equation}\label{weakly-conv-ass}
\A_k(x,m^k,  \nabla  u^k)
\rightharpoonup \bzeta\quad \mbox{weakly in } L^{\frac{p}{p-1}}(B_3; \R^n) \quad \mbox{for some}\quad \bzeta \in L^{\frac{p}{p-1}}(B_3; \R^n).
\end{equation}
 Then we have
\[
\bzeta(x) =\A\big(x, m(x), \nabla  u(x)\big)
\quad \mbox{for a.e. } x\in B_3.
\]
\end{lemma}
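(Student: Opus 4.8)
The plan is to use the Minty--Browder monotonicity trick, adapted to accommodate both the $x$-dependence (passed to the limit via the strong $L^1$ convergence of coefficients guaranteed by Lemma~\ref{coeff-compactness}-type arguments) and the extra $z$-variable carried by $m^k$. First I would pass to the limit in the weak formulation of \eqref{k-equation}: since $\A_k(x,m^k,\nabla u^k)\rightharpoonup\bzeta$ weakly in $L^{p/(p-1)}$ and $\F_k\to 0$ strongly, testing with $\varphi\in W_0^{1,p}(B_3)$ gives
\begin{equation*}
\int_{B_3}\langle\bzeta,\nabla\varphi\rangle\,dx=0\qquad\forall\varphi\in W_0^{1,p}(B_3),
\end{equation*}
so $\div\bzeta=0$ in the distributional sense. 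The goal is then to identify $\bzeta$ pointwise with $\A(x,m(x),\nabla u(x))$.

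The core step is the monotonicity argument. Fix a nonnegative $\phi\in C_0^\infty(B_3)$ and an arbitrary $w\in W^{1,p}(B_3)$, and consider, using \eqref{ellipticity-ass},
\begin{equation*}
I_k:=\int_{B_3}\big\langle\A_k(x,m^k,\nabla u^k)-\A_k(x,m^k,\nabla w),\nabla u^k-\nabla w\big\rangle\phi\,dx\geq 0.
\end{equation*}
I would expand $I_k$ into four terms. The term $\int\langle\A_k(x,m^k,\nabla u^k),\nabla u^k\rangle\phi\,dx$ is the delicate one: using $u^k$ as (essentially) a test function — more precisely testing \eqref{k-equation} with $\phi(u^k-u)$ plus a cut-off manipulation, and using that $u^k\to u$ strongly in $L^p$, $\F_k\to 0$ strongly, and $\div\bzeta=0$ — one shows $\int\langle\A_k(x,m^k,\nabla u^k),\nabla u^k\rangle\phi\,dx\to\int\langle\bzeta,\nabla u\rangle\phi\,dx$ (modulo lower-order terms involving $\nabla\phi\cdot(u^k-u)$ that vanish). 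The remaining three terms are handled by combining weak convergence $\nabla u^k\rightharpoonup\nabla u$ with \emph{strong} $L^{p/(p-1)}$ convergence of $\A_k(x,m^k,\nabla w)\to\A(x,m,\nabla w)$ and of $\A_k(x,m^k,\nabla u^k)\rightharpoonup\bzeta$ paired against $\nabla w$; the strong convergence of the composed coefficients is where \eqref{z-continuity-ass} (to absorb $m^k\to m$ a.e., via dominated convergence using the $(1+|\xi|^2)^{(p-1)/2}$ bound \eqref{growth-ass} on the fixed function $\nabla w$), the a.e.\ convergence $\A_k\to\A$, and the continuity of $\A$ in $\xi$ all enter. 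Passing to the limit yields
\begin{equation*}
\int_{B_3}\big\langle\bzeta-\A(x,m,\nabla w),\nabla u-\nabla w\big\rangle\phi\,dx\geq 0.
\end{equation*}

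Finally I would run the standard Minty device: choose $w=u-t\psi$ for $\psi\in W^{1,p}(B_3)$ and $t>0$, divide by $t$, and let $t\to 0^+$. Using the continuity of $\A$ in $\xi$ (so $\A(x,m,\nabla u-t\nabla\psi)\to\A(x,m,\nabla u)$ pointwise, dominated using \eqref{growth-ass}), this gives $\int_{B_3}\langle\bzeta-\A(x,m,\nabla u),\nabla\psi\rangle\phi\,dx\geq 0$ for all $\psi$ and all nonnegative $\phi\in C_0^\infty$; replacing $\psi$ by $-\psi$ forces equality, hence $\bzeta(x)=\A(x,m(x),\nabla u(x))$ a.e. The main obstacle is the ``energy'' limit $\int\langle\A_k(x,m^k,\nabla u^k),\nabla u^k\rangle\phi\,dx\to\int\langle\bzeta,\nabla u\rangle\phi\,dx$: unlike the classical constant-coefficient Minty argument where $u^k$ itself is a legitimate test function, here one must carefully use a cut-off $\phi$, exploit $\div\bzeta=0$ in the limit, and control the cross terms with $\nabla\phi$ — this is the step that genuinely uses the strong $L^p$ convergence of $u^k$ and the strong convergence $\F_k\to 0$.
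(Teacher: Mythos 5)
Your proposal is correct and follows essentially the same Minty--Browder monotonicity argument as the paper: the same limiting inequality $\int_{B_3}\langle \bzeta-\A(x,m,\nabla w),\nabla u-\nabla w\rangle\phi\,dx\geq 0$ is reached via weak convergence of the flux, strong convergence of $\A_k(\cdot,m^k,\nabla w)$ from \eqref{z-continuity-ass}--\eqref{growth-ass} and dominated convergence, and the same final variation $w=u\pm t\psi$. The only (cosmetic) difference is bookkeeping for the energy term: you test with $\phi(u^k-u)$ directly, whereas the paper tests with $u^k\phi$ and then with $u\phi$ in the limit to convert $-\int\langle\bzeta,\nabla\phi\rangle u\,dx$ into $\int\langle\bzeta,\nabla u\rangle\phi\,dx$ --- both routes use exactly the same hypotheses.
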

\begin{proof}
We shall use Minty--Browder's technique which employs monotonicity to justify passing to weak limits within a nonlinearity (see \cite{E2, LL, L}). This technique was also used  in \cite{BWZ}. 
% By working with a subsequence if %necessary, we can assume further that %$u^k(x) \to u(x)$ for a.e. $x\in B_3$.
Let $\phi \in C^\infty_0(B_3)$ be a nonnegative function. Then for any function $v\in W^{1,p}(B_3)$, we have from \eqref{ellipticity-ass} that
\begin{align*}
\int_{B_3} \left\langle\A_k(x, m^k,  \nabla  u^k)
 -\A_k(x, m^k,  \nabla  v), \nabla  u^k - \nabla v
\right\rangle  \phi\, dx\geq 0
\end{align*}
which can be rewritten as 
\begin{align}\label{MB}
&\int_{B_3} \langle  \A_k(x, m^k,  \nabla  u^k)
, \nabla  u^k 
\rangle  \phi\, dx
-\int_{B_3} \langle\A_k(x, m^k,  \nabla  u^k),  \nabla v
\rangle  \phi\, dx\\
&-\int_{B_3} \langle \A_k(x, m^k,  \nabla  v), \nabla  u^k - \nabla  v 
\rangle  \phi\, dx
\geq 0.\nonumber
\end{align}
By using $u^k\phi$ as a test function for  \eqref{k-equation}, we see that the first term in \eqref{MB}  is the same as 
\[
-\int_{B_3} \langle\A_k(x, m^k,  \nabla  u^k)
, \nabla  \phi
\rangle  u^k dx
+ \int_{B_3} \langle  \F_k,\phi \nabla  u^k  +u^k \nabla  \phi \rangle \, dx.
\]
Therefore,  inequality \eqref{MB} becomes
\begin{align}\label{Minty-ineq}
&-\int_{B_3} \langle\A_k(x, m^k,  \nabla  u^k)
, \nabla  \phi
\rangle  u^k dx + \int_{B_3} \langle  \F_k,\phi \nabla  u^k  +u^k \nabla  \phi \rangle \, dx
\nonumber\\
&-\int_{B_3} \langle\A_k(x, m^k,  \nabla  u^k),  \nabla v
\rangle  \phi dx -\int_{B_3} \langle \A_k(x, m^k,  \nabla  v), \nabla  u^k -\nabla v  
\rangle  \phi\, dx
 \geq 0.
\end{align}
Notice that since
\begin{align*}
 |\A_k(x, m^k,  \nabla  v) -
\A(x, m,  \nabla  v)| 
&\leq |\A_k(x, m^k,  \nabla  v) -
\A_k(x, m,  \nabla  v)| + |\A_k(x, m,  \nabla  v) -
\A(x, m,  \nabla  v)|\\
&\leq \omega(|m^k -m|) (1 +|\nabla v|^2)^{\frac{p-1}{2}} +|\A_k(x, m,  \nabla  v) -
\A(x, m,  \nabla  v)|,
\end{align*}
we get $\A_k(x, m^k,  \nabla  v) \to 
\A(x, m,  \nabla  v)$ for a.e. $x\in B_3$. Hence we conclude from condition \eqref{growth-ass}  and the Lebesgue's dominated convergence theorem   that 
$\A_k(x, m^k,  \nabla  v) \to 
\A(x, m,  \nabla  v)$ strongly in $L^{\frac{p}{p-1}}(B_3; \R^n)$. Therefore,
\begin{align*}
\lim_{k\to \infty}\int_{B_3} \langle \A_k(x, m^k,  \nabla  v), \nabla  u^k 
-\nabla v\rangle  \phi\, dx
=\int_{B_3} \langle \A(x, m,  \nabla  v), \nabla  u -\nabla v 
\rangle  \phi\, dx.
\end{align*}
Using this and  assumption \eqref{weakly-conv-ass}, we can pass to the limits in \eqref{Minty-ineq} to obtain
\begin{align}\label{limiting-Minty}
-\int_{B_3} \langle \bzeta, \nabla  \phi 
\rangle u\, dx
-\int_{B_3} \langle \bzeta, \nabla  v 
\rangle \phi\, dx
-\int_{B_3} \langle \A(x, m,  \nabla  v), \nabla  u -\nabla v
\rangle  \phi\, dx
\geq 0.
\end{align}
On the other hand, by choosing $u\phi$ as a test function for equation  \eqref{k-equation} and passing to the limits, we get 
$\int_{B_3}{\langle \bzeta, \nabla (u\phi)\rangle\, dx}=0$ which yields
$-\int_{B_3}{\langle \bzeta, \nabla \phi\rangle u \, dx}=\int_{B_3}{\langle \bzeta, \nabla u\rangle\phi\, dx}$.
Hence we can rewrite \eqref{limiting-Minty} as 
\begin{align*}
\int_{B_3}\langle \bzeta- \A(x, m,  \nabla  v),  \nabla u-\nabla v
\rangle  \phi\, dx\geq 0.
\end{align*}
By taking $v = u \pm\alpha w$  and letting $\alpha\to 0^+$, one easily  deduces from the above inequality and the assumption $\A$ being continuous in the $\xi$ variable that
\begin{align*}
\int_{B_3}\langle \bzeta- \A(x, m,  \nabla  u),  \nabla w
\rangle  \phi\, dx=0
\end{align*}
for all functions $w\in W^{1,p}(B_3)$ and all nonnegative functions $\phi \in C^\infty_0(B_3)$. It then follows that $\bzeta= \A(x, m,  \nabla  u)\,$ a.e. in $B_3$.
\end{proof}

The following approximation lemma plays a central role in our proof of Theorem~\ref{main-result}. It is crucial that the  constant $\delta>0$ is independent of the parameters $\lambda$ and $\theta$. We shall prove it  by extending   the compactness argument used in
 \cite[Lemma~2.11]{HNP1} and for this  purpose we define
\begin{equation*}\label{oscillating-fn}
d_{\A,\ba}(x) := \sup_{z\in \overline\K}\sup_{ \xi\neq 0}\frac{|\A(x,z,\xi) - \ba(x,z,\xi)|}{|\xi|^{p-1}}.
\end{equation*}

\begin{lemma}\label{lm:compare-solution} Let $\A$ satisfy \eqref{structural-reference-1}--\eqref{structural-reference-3}, $\ba \in \G_{B_3}(\eta)$, and $M_0>0$.
For any $\e>0$, there exists $\delta>0$ depending only on $\e$, $\Lambda$, $p$,   $\eta$,  $n$, $\K$ and $M_0$  such that:  if $\lambda>0$, $ 0<\theta \leq 1$,
\begin{equation*}
\fint_{B_3}  d_{\A,\ba}(x) \, dx  \leq \delta, \qquad \fint_{B_4} |\F|^{\frac{p}{p-1}} \, dx  \leq \delta,
\end{equation*}
and $u\in W_{loc}^{1,p}(B_4)$ is a weak solution of 
\begin{equation}\label{eq-for-u}
\div \Big[\frac{\A(x,\lambda\theta u,\lambda \nabla u)}{\lambda^{p-1}}\Big] =  \div \F\quad \text{in}\quad B_4
\end{equation}
 satisfying
\begin{equation}\label{gradone}
 \Big(\fint_{B_4}{|u|^p\, dx}\Big)^{\frac1p}\leq \frac{M_0}{\lambda \theta} \quad\mbox{ and }\quad 
\fint_{B_4}{ |\nabla  u|^p\, dx} \leq 1,
\end{equation}
and $v\in  W^{1,p}(B_3)$  is a weak solution of
\begin{equation}\label{eq-for-v}
\left \{
\begin{array}{lcll}
\div \Big[\frac{\ba(x,\lambda\theta v,\lambda \nabla v)}{\lambda^{p-1}}\Big] &=&  0 \quad &\text{in}\quad B_3, \\
v & =& u\quad &\text{on}\quad \partial B_3,
\end{array}\right.
\end{equation}
 then 
\begin{equation}\label{u-v-close}
\int_{B_3}{|u - v|^p\, dx}\leq \e^p.
\end{equation}
\end{lemma}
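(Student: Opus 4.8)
The plan is to argue by contradiction via a compactness argument, as the lemma's proof is stated to extend \cite[Lemma~2.11]{HNP1}. Suppose the conclusion fails: then for some $\e>0$ there exist sequences $\lambda_k>0$, $0<\theta_k\le 1$, vector fields $\A_k$ satisfying \eqref{structural-reference-1}--\eqref{structural-reference-3}, vector fields $\ba_k\in\G_{B_3}(\eta)$, right-hand sides $\F_k$, and weak solutions $u^k\in W^{1,p}_{loc}(B_4)$ of \eqref{eq-for-u} (with $\A_k,\lambda_k,\theta_k,\F_k$) together with $v^k\in W^{1,p}(B_3)$ solving \eqref{eq-for-v}, such that
\[
\fint_{B_3} d_{\A_k,\ba_k}\,dx\le \tfrac1k,\qquad \fint_{B_4}|\F_k|^{\frac{p}{p-1}}\,dx\le\tfrac1k,
\]
the bounds \eqref{gradone} hold, yet $\int_{B_3}|u^k-v^k|^p\,dx>\e^p$. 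First I would pass to the natural auxiliary quantities. Since the structural bounds are scaling-invariant, set $m^k:=\lambda_k\theta_k u^k$ and rewrite \eqref{eq-for-u} as $\div \tilde\A_k(x,m^k,\nabla u^k)=\div\F_k$ where $\tilde\A_k(x,z,\xi):=\lambda_k^{-(p-1)}\A_k(x,z,\lambda_k\xi)$ still obeys \eqref{structural-reference-1}--\eqref{structural-reference-3} with the same $\Lambda,p$; likewise define $\tilde\ba_k$. The hypothesis $\|u^k\|$-bound gives $\|m^k\|_{L^p(B_4)}\le M_0$ and $m^k(x)\in\overline\K$ a.e., while $\fint_{B_4}|\nabla u^k|^p\le1$. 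By Lemma~\ref{lm:bound-solution}, $\int_{B_3}|\nabla u^k-\nabla v^k|^p\,dx\le C\fint_{B_4}(|u^k|^p+|\F_k|^{p/(p-1)})\,dx$ — but note the $|u^k|^p$ term is only bounded by $(M_0/(\lambda_k\theta_k))^p$, which may blow up, so one must instead apply Lemma~\ref{lm:bound-solution} to the scaled pair $\tilde\A_k,\tilde\ba_k$ with datum $m^k$ in place of $u$, or more directly use the Caccioppoli estimate for the equation in $u^k$ (Lemma~\ref{W^{1,p}-est} applied after scaling) to get $\|\nabla v^k\|_{L^p(B_3)}+\|\nabla(u^k-v^k)\|_{L^p(B_3)}\le C$ uniformly in $k$.

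Next I would extract limits. Up to subsequences: $u^k\rightharpoonup u$ weakly in $W^{1,p}(B_3)$ and strongly in $L^p(B_3)$; $v^k\rightharpoonup v$ weakly in $W^{1,p}(B_3)$ and strongly in $L^p(B_3)$ (Rellich); $u^k-v^k\in W^{1,p}_0(B_3)$ so the limits satisfy $u-v\in W^{1,p}_0(B_3)$, i.e. $u=v$ on $\partial B_3$. Since $m^k=\lambda_k\theta_k u^k$ is bounded in $L^p$, passing to a further subsequence $m^k\to m$ a.e. in $B_3$ for some $m$ with $m(x)\in\overline\K$ a.e. The vector fields $\tilde\ba_k$ satisfy \eqref{structural-reference-2}--\eqref{structural-reference-3} and (\textbf{H1})--(\textbf{H2}), so by Lemma~\ref{coeff-compactness} there is a subsequence with $\tilde\ba_k(x,z,\xi)\to\ba_\infty(x,z,\xi)$ for a.e.\ $x$ and all $(z,\xi)$, with $\ba_\infty$ continuous in $\xi$; moreover $\ba_\infty$ inherits \eqref{structural-reference-1}--\eqref{structural-reference-3} pointwise in the limit. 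From $\fint_{B_3}d_{\A_k,\ba_k}\le1/k$ (which rescales to the same bound for $\tilde\A_k,\tilde\ba_k$ by homogeneity of the normalization by $|\xi|^{p-1}$) and the growth \eqref{structural-reference-2}, I would also conclude $\tilde\A_k(x,z,\xi)\to\ba_\infty(x,z,\xi)$ in the same a.e./pointwise sense, using that $d_{\tilde\A_k,\tilde\ba_k}\to0$ in $L^1$ forces, along a further subsequence, $d_{\tilde\A_k,\tilde\ba_k}(x)\to0$ a.e. The fluxes $\tilde\A_k(x,m^k,\nabla u^k)$ and $\tilde\ba_k(x,\lambda_k\theta_k v^k,\nabla v^k)$ are bounded in $L^{p/(p-1)}(B_3)$ by \eqref{structural-reference-2}, so along a subsequence they converge weakly to some $\bzeta,\bzeta'\in L^{p/(p-1)}(B_3;\R^n)$.

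Now the identification step: I would apply Lemma~\ref{lm:Minty–Browder} twice. For the $u^k$-sequence, the hypotheses are met (monotonicity \eqref{ellipticity-ass} follows from \eqref{structural-reference-1}; growth \eqref{growth-ass} and $z$-continuity \eqref{z-continuity-ass} from \eqref{structural-reference-2}--\eqref{structural-reference-3} — one may need the cruder $(1+|\xi|^2)^{(p-1)/2}$ form, which is implied; $\F_k\to0$ strongly; $\tilde\A_k\to\ba_\infty$ pointwise with $\ba_\infty$ $\xi$-continuous) so $\bzeta=\ba_\infty(x,m,\nabla u)$ a.e., i.e. $u$ is a weak solution of $\div\ba_\infty(x,m,\nabla u)=0$ in $B_3$. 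For the $v^k$-sequence one argues similarly — here the ``$m^k$'' role is played by $\lambda_k\theta_k v^k$; since $v^k\to v$ strongly in $L^p$ one would need $\lambda_k\theta_k v^k$ to converge a.e., which follows after a subsequence from the $L^p$ bound on $\lambda_k\theta_k v^k$ (obtained from \eqref{v-L^p-est} of Lemma~\ref{lm:bound-solution}, suitably rescaled, bounding it by $M_0$ plus lower order). This gives $\bzeta'=\ba_\infty(x,\tilde m,\nabla v)$ where $\tilde m$ is the a.e. limit of $\lambda_k\theta_k v^k$; but since $u^k-v^k\in W^{1,p}_0(B_3)$ and $\lambda_k\theta_k(u^k-v^k)\to m-\tilde m$ while also $\lambda_k\theta_k(u^k-v^k)\to 0$ weakly is not automatic — instead I note $m^k-\lambda_k\theta_k v^k=\lambda_k\theta_k(u^k-v^k)$ and the a.e. limit $m=\tilde m$ follows because $u^k-v^k\to 0$ in $L^p(B_3)$ forces (subsequence) $\lambda_k\theta_k(u^k-v^k)\to m-\tilde m$ a.e.\ only if $\lambda_k\theta_k$ stays bounded; if $\lambda_k\theta_k\to\infty$ one instead works directly with the scaled functions $m^k$ throughout and identifies $u,v$ as solutions of the \emph{same} limit equation with the \emph{same} limiting coefficient-state $m$. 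Either way both $u$ and $v$ solve $\div\ba_\infty(x,m,\nabla\cdot)=0$ in $B_3$ with the same boundary data (since $u-v\in W^{1,p}_0$), and $\ba_\infty$ satisfies \eqref{structural-reference-1}--\eqref{structural-reference-3}, so by the uniqueness in Remark~\ref{existence+uniqueness} (applied to the fixed coefficient field $x\mapsto\ba_\infty(x,m(x),\cdot)$, which is admissible) we get $u=v$ a.e.\ in $B_3$.

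Finally I would upgrade the weak $W^{1,p}$ convergence to strong $L^p$ convergence of $u^k-v^k$ to close the contradiction. Using $u^k-v^k\in W^{1,p}_0(B_3)$ as test function in the difference of the two equations, together with Lemma~\ref{simple-est} (for $1<p<2$) or directly \eqref{structural-reference-1} (for $p\ge2$) and the established pointwise convergence $\tilde\A_k,\tilde\ba_k\to\ba_\infty$, the smallness $d_{\tilde\A_k,\tilde\ba_k}\to0$ in $L^1$, and $\F_k\to0$, one shows $\int_{B_3}|\nabla u^k-\nabla v^k|^p\,dx\to0$; combined with Poincaré (both vanish on $\partial B_3$ in the difference) this gives $\int_{B_3}|u^k-v^k|^p\,dx\to0$, contradicting $>\e^p$. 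The main obstacle I anticipate is precisely the bookkeeping of the parameters $\lambda_k,\theta_k$: because $\|u^k\|_{L^\infty}$ is only bounded by $M_0/(\lambda_k\theta_k)$, the natural ``state'' variable feeding the nonlinearity is $m^k=\lambda_k\theta_k u^k$ (bounded, valued in $\overline\K$), and one must consistently phrase the compactness, the application of Lemma~\ref{lm:Minty–Browder}, and the uniqueness step in terms of $m^k$ rather than $u^k$, ensuring all constants ($\delta$ in particular) come out independent of $\lambda_k,\theta_k$ — this is exactly the point the paper flags as crucial. A secondary technical point is arguing that $L^1$-smallness of $d_{\tilde\A_k,\tilde\ba_k}$ suffices (via a.e.\ convergence along a subsequence plus the uniform growth bound, giving an equi-integrable dominating function) to force $\tilde\A_k\to\ba_\infty$ in the pointwise sense demanded by Lemma~\ref{lm:Minty–Browder}.
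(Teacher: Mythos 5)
Your overall skeleton --- contradiction, rescaling to $\hat\A_k(x,z,\xi)=\A_k(x,z,\lambda_k\xi)/\lambda_k^{p-1}$, compactness of the coefficient class via Lemma~\ref{coeff-compactness}, identification of the limit fluxes via Lemma~\ref{lm:Minty–Browder}, and uniqueness via Remark~\ref{existen
ce+uniqueness} --- is exactly the paper's strategy. But there are genuine gaps precisely at the point you yourself flag as the crux, namely the bookkeeping of $\alpha_k:=\lambda_k\theta_k$. First, you miss the paper's key opening move: the contradiction hypothesis $\int_{B_3}|u^k-v^k|^p>\e_0^p$, combined with Lemma~\ref{lm:bound-solution} and Sobolev, gives $\e_0^p<C\big[(M_0/\alpha_k)^p+k^{-1}\big]$ and hence forces $\{\alpha_k\}$ to be \emph{bounded}. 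This single estimate both disposes of the case $\alpha_k\to\infty$ (which you worry about, but which is in fact the easy case --- it would make $u^k\to0$ and $\nabla u^k-\nabla v^k\to0$ directly) and sets up the real dichotomy $\alpha_k\to\alpha>0$ versus $\alpha_k\to0$. Second, your extraction of limits ``$u^k\to u$ strongly in $L^p(B_3)$'' is unjustified when $\alpha_k\to0$: the only available bound is $\|u^k\|_{L^p(B_4)}\le CM_0/\alpha_k$, which degenerates, so $\{u^k\}$ need not be bounded in $L^p$ at all and no subsequence converges. The paper handles this case by passing to $w^k=u^k-u^k_{B_3}$ and $h^k=v^k-u^k_{B_3}$ (bounded in $W^{1,p}$ by Poincar\'e and the gradient bounds) and by showing that the state variable $\bar u^k=\alpha_k u^k$ converges \emph{strongly in $W^{1,p}$ to a constant} $\bar v$, so that the limit equation has a frozen $z$-argument. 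Relatedly, your assertion that $m^k=\alpha_k u^k\to m$ a.e.\ ``after a subsequence from the $L^p$ bound'' is false as stated: an $L^p$ bound yields only weak convergence, and weakly convergent sequences need not have a.e.\ convergent subsequences; the a.e.\ convergence of $m^k$ required by Lemma~\ref{lm:Minty–Browder} must come from strong convergence, which is exactly what the case analysis delivers.

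A secondary problem is your closing step. You propose to prove $\int_{B_3}|\nabla u^k-\nabla v^k|^p\to0$ by testing the difference of the two equations with $u^k-v^k$; but the cross term $\int\langle\hat\ba_k(x,\alpha_k u^k,\nabla v^k)-\hat\ba_k(x,\alpha_k v^k,\nabla v^k),\nabla u^k-\nabla v^k\rangle$ is controlled only by $\omega(|\alpha_k(u^k-v^k)|)$-type quantities, i.e.\ it already requires knowing $u^k-v^k\to0$ --- the argument is circular. No gradient convergence is needed: once the limits are identified and uniqueness gives $u\equiv v$ (resp.\ $w\equiv h$), the strong $L^p$ convergence $u^k-v^k=w^k-h^k\to0$ follows from Rellich applied to the (mean-normalized) sequences, which is how the paper concludes. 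Your instinct to phrase everything in terms of $m^k$ is the right one, but without the boundedness of $\alpha_k$ and the mean-subtraction device the compactness step does not go through.
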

\begin{proof}
We  prove \eqref{u-v-close} by contradiction. Suppose that estimate \eqref{u-v-close} is not true. Then there exist $\e_0,\, p, \, \Lambda,  \,  \eta, \, n,\, \K, \, M_0$, 
 sequences of  positive numbers  $\{\lambda_k\}_{k=1}^\infty$ and  $\{\theta_k\}_{k=1}^\infty$ with $0<\theta_k\leq 1$, sequences  $\{\A_k\}_{k=1}^\infty$ and $\{\ba_k\}_{k=1}^\infty$ with 
$ \A_k$ satisfying the structural conditions \eqref{structural-reference-1}--\eqref{structural-reference-3} and 
$\ba_k\in \G_{B_3}(\eta)$, 
 and  sequences of functions $\{\F_k\}_{k=1}^\infty$,   $\{u^k\}_{k=1}^\infty$ 
such that
\begin{equation*}
\sup_{\xi\neq 0} \sup_{x_0:\, B_r(x_0)\subset B_3 } \fint_{B_r(x_0)}\frac{ \big|\ba_k(x,z,\xi) - (\ba_k)_{B_r(x_0)}(z,\xi)\big|}{|\xi|^{p-1}} \,dx\leq \eta(z,r),
\end{equation*}
\begin{equation}\label{Theta_k-f_k-condition}
\fint_{B_3} d_{\A_k,\ba_k}(x) \, dx  \leq \frac{1}{k}, \qquad
\fint_{B_4}|\F_k|^{\frac{p}{p-1}} \, dx  \leq \frac{1}{k}, 
\end{equation}
 $u^k\in W^{1,p}_{loc}(B_4)$ is a weak solution of 
\begin{equation*}
\div \Big[\frac{\A_k(x,\lambda_k\theta_k u^k,\lambda_k\nabla u^k)}{\lambda_k^{p-1}}\Big] = \div  \F_k \quad \text{in}\quad B_4
\end{equation*}
with 
\begin{equation}\label{gradient-bounded-ass}
 \Big(\fint_{B_4}{|u^k|^p\, dx}\Big)^{\frac1p}\leq \frac{M_0}{\lambda_k \theta_k} \quad\mbox{ and }\quad 
\fint_{B_4}{ |\nabla  u^k|^p\, dx} \leq 1,
\end{equation}
\begin{equation}\label{contradiction-conclusion}
\int_{B_3} |u^k - v^k|^p \, dx  > \e_0^p  \quad\mbox{for all } k.
\end{equation}
Here  $v^k\in W^{1,p}(B_3)$  is a weak solution  of
\begin{equation*}
\left \{
\begin{array}{lcll}
\div \Big[\frac{\ba_k(x,\lambda_k\theta_k v^k,\lambda_k\nabla v^k)}{\lambda_k^{p-1}}\Big] &=&  0 \quad &\text{in}\quad B_3, \\
v^k & =& u^k\quad &\text{on}\quad \partial B_3.
\end{array}\right.
\end{equation*}

Let us set
\begin{align*}
\alpha_k := \lambda_k \theta_k, \quad \hat \A_k(x,z, \xi) :=\frac{\A_k(x,z, \lambda_k\xi)}{\lambda_k^{p-1}} \quad \mbox{and}\quad \hat \ba_k(x,z, \xi) :=\frac{\ba_k(x,z, \lambda_k\xi)}{\lambda_k^{p-1}}.
\end{align*}
Then, we still have 
\[d_{\hat\A_k, \hat \ba_k}(x)=d_{\A_k,\ba_k}(x)
\]
and
\begin{equation}\label{k-osc-ref-eq}
\sup_{ \xi\neq 0} \sup_{x_0:\, B_r(x_0)\subset B_3 } \fint_{B_r(x_0)}\frac{ \big|\hat\ba_k(x,z,\xi) - (\hat\ba_k)_{B_r(x_0)}(z,\xi)\big|}{|\xi|^{p-1}} \,dx\leq \eta(z,r).
\end{equation}
Moreover, $u^k$ is a weak solution of 
\begin{equation}\label{eq-u_k}
\div \hat \A_k(x,\alpha_k u^k,\nabla u^k) = \div  \F_k\quad \text{in}\quad B_4,
\end{equation}
and $v^k$  is a weak solution  of
\begin{equation*}
\left \{
\begin{array}{lcll}
\div \hat \ba_k(x,\alpha_k v^k,\nabla v^k) &=&  0 \quad &\text{in}\quad B_3, \\
v^k & =& u^k\quad &\text{on}\quad \partial B_3.
\end{array}\right.
\end{equation*}

Using Sobolev's inequality, Lemma~\ref{lm:bound-solution},  $\Big(\fint_{B_4}{|u^k|^p\, dx}\Big)^{\frac1p}\leq \frac{M_0}{\alpha_k}$ and \eqref{Theta_k-f_k-condition}, we obtain
\begin{align*}
\int_{B_3} |u^k-v^k|^p\, dx
\leq C\int_{B_3} |\nabla u^k-\nabla v^k|^p\, dx 
\leq C(p,n,\Lambda) \int_{B_4} \big(|u^k|^p +|\F_k|^{\frac{p}{p-1}} \big)\, dx
\leq  C\Big[ \big(\frac{M_0}{\alpha_k}\big)^p +k^{-1}\Big].
\end{align*}
Thus we infer from \eqref{contradiction-conclusion}  that
\[
  \alpha_k\leq \frac{M_0}{C^{-1}\e_0 -k^{\frac{-1}{p}}},
\]
and so the sequence $\{\alpha_k\}$ is  bounded. From this, Lemma~\ref{coeff-compactness}
and  by taking subsequences if necessary, we see  that there exist a constant  $\alpha\in [0,\infty)$  and a vector field $\hat\ba: B_3\times \overline\K\times \R^n \to \R^n$ being continuous in the $\xi$ variable such that
 $\alpha_k \to \alpha$ and 
$\hat\ba_k(x,z,\xi) \to \hat\ba(x,z, \xi)$ for a.e. $x\in B_3$ and for all $(z,\xi)\in \overline\K\times \R^n$. Moreover,  \eqref{Theta_k-f_k-condition} implies that, up to a subsequence,  
$d_{\A_k,\ba_k}(x) \to 0$ for a.e. $x\in B_3$. Thus, we also have $\hat \A_k(x,z,\xi) \to \hat\ba(x,z, \xi)$ for a.e. $x\in B_3$ and for all $(z,\xi)\in \overline\K\times \R^n$.
 
By using the pointwise convergence, it can be verified that
$\hat \ba$ satisfies conditions \eqref{structural-reference-1}--\eqref{structural-reference-3}.
We are going to derive a contradiction by proving the following claim.

\textbf{Claim.} There are subsequences $\{u^{k_m}\}$ and $\{v^{k_m}\}$ such that $u^{k_m} - v^{k_m} \to 0$ in $L^p(B_3)$ as $m\to\infty$.

Let us consider the case $\alpha>0$ first. Then, thanks to \eqref{gradient-bounded-ass}, the sequence $\{u^k\}$ is bounded in $W^{1,p}(B_3)$. Likewise, by using  \eqref{v-L^p-est} and  
Proposition~\ref{pro:gradient-est} with $\A(x,z,\xi)\equiv \hat\ba_k(x, \alpha_k z, \xi)$ and $\F\equiv 0$, we also have  that the sequence $\{v^k\}$ is  bounded in  $W^{1,p}(B_3)$.
Therefore there exist  subsequences, still denoted by $\{u^k\}$ and $\{v^k\}$, and  functions $u, v\in W^{1,p}(B_3)$ such that 
\begin{equation*}
 \left \{ \  
\begin{array}{lll}
&u^k \to u  \mbox{ a.e. in } B_3, \quad u^k \to u  \mbox{ strongly in } L^p(B_3),\quad \nabla  u^k \rightharpoonup \nabla  u \text{ weakly in } L^p(B_3),\\
&v^k \to v  \mbox{ a.e. in } B_3, \, \quad v^k \to v  \mbox{ strongly in } L^p(B_3),\, \quad \nabla  v^k \rightharpoonup \nabla  v \text{ weakly in } L^p(B_3).
\end{array} \right . 
\end{equation*}
In particular, we have
\begin{equation}\label{range-and-boundary}
u(x), \,v(x)\in   \frac{1}{\alpha} \overline\K \,\, \mbox{ for a.e. } x\in B_3, \quad \text{and} \quad
 u = v \quad \text{on} \quad  \partial B_3.
\end{equation}
Also as the sequence $\Big\{\hat\A_k(x,\alpha_k u^k,\nabla u^k)\Big\}$ is bounded in $L^{\frac{p}{p-1}}(B_3; \R^n)$, by taking a subsequence there exists $\bzeta \in L^{\frac{p}{p-1}}(B_3; \R^n)$ such that 
\[\hat\A_k(x,\alpha_k u^k,\nabla u^k) \rightharpoonup \bzeta \, \mbox{ weakly in }\,  L^{\frac{p}{p-1}}(B_3; \R^n).
\]
 But by  applying Lemma~\ref{lm:Minty–Browder}
for $m_k(x) \rightsquigarrow  \alpha_k u^k(x)$ and $m(x) \rightsquigarrow  \alpha u(x)$, we obtain $\bzeta \equiv \hat\ba(x,\alpha u,\nabla u)$. That is,
\[
\hat\A_k(x,\alpha_k u^k,\nabla u^k)\rightharpoonup \hat\ba(x,\alpha u,\nabla u)\,\mbox{ weakly in } L^{\frac{p}{p-1}}(B_3; \R^n).
\]
Therefore,
\begin{align}\label{eq:takecare-limit-I}
 \lim_{k\to\infty}\int_{B_3}{ \big\langle \hat\A_k(x,\alpha_k u^k,\nabla u^k), \nabla \varphi\big\rangle dx }= \int_{B_3}{ \big\langle \hat\ba(x,\alpha u,\nabla u), \nabla \varphi\big\rangle dx } \quad \mbox{for all}\quad \varphi\in C_0^\infty(B_3).
 \end{align}
Thus  by passing $k\to\infty$ for  equation \eqref{eq-u_k}, one sees that $u$ is a weak solution of the equation
\begin{equation}\label{limit-equation}
\div \hat\ba(x,\alpha u,\nabla u) =0\quad\mbox{in}\quad B_3.
\end{equation}
Similarly, $v$ is a weak solution of
\begin{equation*}%\label{limit-equation}
\div \hat\ba(x,\alpha v,\nabla v) =0\quad\mbox{in}\quad B_3.
\end{equation*}
Hence due to \eqref{range-and-boundary} and by the uniqueness of the weak solution to equation \eqref{limit-equation} as explained in Remark~\ref{existen
ce+uniqueness}, we conclude that 
$u \equiv  v$ in $B_3$. It follows that $u^k-v^k \to u-v=0$ strongly in $L^p(B_3)$.

Now, consider the case $\alpha =0$, that is, $\alpha_k \to 0$.  Let $\bar u^k :=\alpha_k u^k$, $\bar v^k :=\alpha_k  v^k$, $w^k := u^k - u^k_{B_3}$ 
and $h^k := v^k - u^k_{B_3}$, where $u^k_{B_3} := \fint_{B_3}{u^k(x) \, dx}$. Then $w^k\in W_{loc}^{1,p}(B_4)$ is a weak solution of 
\begin{equation}\label{eq-w_k}
\div \hat\A_k(x,\bar u^k,\nabla w^k) = \div \F_k \quad \text{in}\quad B_4
\end{equation}
and   $h^k\in W^{1,p}(B_3)$  is a weak solution  of
\begin{equation}\label{eq-h_k}
\left \{
\begin{array}{lcll}
\div \hat\ba_k(x,\bar v^k,\nabla h^k) &=&  0 \quad &\text{in}\quad B_3, \\
h^k & =& w^k\quad &\text{on}\quad \partial B_3.
\end{array}\right.
\end{equation}

By applying Proposition~\ref{pro:gradient-est}  for $w \rightsquigarrow  v^k$, $\varphi \rightsquigarrow  u^k$, $\F \equiv 0$ and using \eqref{gradient-bounded-ass}, we get 
\begin{equation}\label{eq-grad-v-k}
\int_{B_3} |\nabla  v^k |^p \, dx\leq C \int_{B_3} |\nabla  u^k|^p\, dx 
\leq C\quad \mbox{for all  } k.
\end{equation}
Consequently,
\begin{equation*}
 \int_{B_3}|\nabla  u^k - \nabla  v^k|^p \, dx 
\leq  2^{p-1}\left[\int_{B_3} |\nabla  u^k|^p\, dx 
+ \int_{B_3} |\nabla  v^k |^p \, dx\right]\leq C
\end{equation*}
which together with the Sobolev's inequality gives 
\begin{equation}\label{difference-control}
\int_{B_3} |u^k - v^k|^p \, dx 
\leq C\int_{B_3}|\nabla  (u^k - v^k)|^p \, dx \leq C.
\end{equation}

 Notice that on one hand the Poincar\'e inequality gives
 \[
 \int_{B_3} |w^k|^p \, dx=\int_{B_3} |u^k - u^k_{B_3}|^p \, dx 
\leq C\int_{B_3}|\nabla  u^k|^p \, dx \leq C.
 \]
On the other hand, by employing the Poincar\'e inequality, \eqref{eq-grad-v-k} and \eqref{difference-control} we obtain
\begin{align*}
\|h^k\|_{L^p(B_3)}
&= \|v^k - u^k_{B_3}\|_{L^p(B_3)}\leq \|v^k - v^k_{B_3}\|_{L^p(B_3)} + |B_3|^{\frac1p} \, |u^k_{B_3}-v^k_{B_3}|\\
&\leq C \|\nabla  v^k\|_{L^p(B_3)} +  \|u^k - v^k\|_{L^p(B_3)}\leq C.
\end{align*}
Therefore, $\{w^k\}$ and $\{h^k\}$ are bounded sequences in $W^{1,p}(B_3)$. 
Moreover, $\{\bar{v}^k\}$ is bounded in $W^{1,p}(B_3)$ owing  to \eqref{eq-grad-v-k} and
\[
\|v^k\|_{L^p(B_3)} \leq \|u^k\|_{L^p(B_3)} 
+\|u^k-v^k\|_{L^p(B_3)}
\leq |B_4|^{\frac1p} \, \frac{M_0}{\alpha_k} +C.
\]
Consequently there are subsequences, still denoted by $\{w^k\}$, $\{h^k\}$ and $\{\bar v^k\}$ and three functions $w, h, \bar v\in W^{1,p}(B_3)$   such that
\begin{equation*}
 \left \{ \  
\begin{array}{lll}
&w^k \to w \mbox{ a.e. in } B_3, \quad w^k \to w \mbox{ strongly in } L^p(B_3),\quad \nabla  w^k \rightharpoonup \nabla  w \text{ weakly in } L^p(B_3),\\
&h^k \to h  \mbox{ a.e. in } B_3, \, \, \quad h^k \to h  \mbox{ strongly in } L^p(B_3),\, \, \quad \nabla  h^k \rightharpoonup \nabla  h \text{ weakly in } L^p(B_3),\\
&\bar v^k \to \bar v  \mbox{ a.e. in } B_3,\, \, \,  \quad \bar v^k \to \bar v \mbox{ strongly in } L^p(B_3),\, \, \, \quad \nabla  \bar v^k \rightharpoonup \nabla  \bar v \text{ weakly in } L^p(B_3).
\end{array} \right . 
\end{equation*}
Since $\nabla \bar{v}^k =\alpha_k \nabla  v^k \to 0$  
in $L^p(B_3)$ thanks to \eqref{eq-grad-v-k}, we infer further that   $\nabla  \bar{v}^k \to \nabla \bar v\equiv 0$ strongly in $L^p(B_3)$. Thus, $\bar v$ is a constant function. As $\nabla \bar{u}^k =\alpha_k \nabla  u^k \to 0$  
in $L^p(B_3)$ and 
\begin{align*}
\| \bar u^k -\bar v\|_{L^p(B_3)}
&\leq \| \bar u^k -\bar v^k\|_{L^p(B_3)} + \| \bar v^k -\bar v\|_{L^p(B_3)}\\
&=\alpha_k \|  u^k - v^k\|_{L^p(B_3)} + \| \bar v^k -\bar v\|_{L^p(B_3)}
\leq C \alpha_k  + \| \bar v^k -\bar v\|_{L^p(B_3)},
\end{align*}
we also have $\bar u^k \to \bar v$ strongly in $W^{1,p}(B_3)$. By taking a further subsequence, we can assume that
$\bar u^k(x) \to \bar v$  a.e. in $B_3$.
 
It follows from Lemma~\ref{lm:Minty–Browder}
for $m_k(x) \rightsquigarrow  \bar u^k(x)$ and $m(x) \rightsquigarrow  \bar v$ that 
\[
\hat\A_k(x,\bar u^k,\nabla w^k) \rightharpoonup \hat\ba(x,\bar v,\nabla w)\,\mbox{ weakly in } L^{\frac{p}{p-1}}(B_3; \R^n)
\]
up to a subsequence. Then as  in \eqref{eq:takecare-limit-I}, one gets for all $\varphi\in C_0^\infty(B_3)$ that
\begin{align*}
 \lim_{k\to\infty}\int_{B_3}{\big\langle\hat\A_k(x,\bar u^k,\nabla w^k), \nabla \varphi\big\rangle dx }
 &= \int_{B_3}{ \big\langle \hat\ba(x,\bar v,\nabla w), \nabla \varphi\big\rangle dx }.
 \end{align*}
Hence by passing to the limit in equation \eqref{eq-w_k}, we  
conclude that $w$ is a weak solution of
\begin{equation*}
\div\hat\ba(x,\bar v,\nabla w)=0 \,\,\text{ in}\quad B_3.
\end{equation*}
Likewise, we deduce from \eqref{eq-h_k} that $h$ is  a weak solution of
\begin{equation}\label{limit-equation-case2}
\left \{
\begin{array}{lcll}
\div \hat\ba(x,\bar v,\nabla h) &=&0 &\text{ in}\quad B_3, \\
h & =& w &\text{ on}\quad \partial B_3.
\end{array}\right.
\end{equation}
By the uniqueness of the weak solution to  equation \eqref{limit-equation-case2}, we conclude that $h \equiv w$ in $B_3$. This gives, again, $u^k-v^k=w^k - h^k\to 0$ in $L^p(B_3)$ as $k\to\infty$.
Therefore, we have proved the Claim which contradicts \eqref{contradiction-conclusion}. Thus the proof of \eqref{u-v-close} is complete.
\end{proof}

\section{Approximating gradients of solutions }\label{approximation-gradient}
 Throughout this section,
 let $\omega: [0,\infty)\to [0,\infty)$ be the function defined  by
\begin{equation}\label{modified-Lipschitz}
\omega(r)= \left \{
\begin{array}{lcll}
r \Lambda  \qquad  \text{if}\quad 0\leq r \leq 2, \\
2\Lambda  \qquad\text{if}\quad r>2.
\end{array}\right.
\end{equation}
Notice that if $\ba$ satisfies \eqref{structural-reference-2}--\eqref{structural-reference-3}, then we obtain from the definition of $\omega$ that
 \begin{equation}\label{modified-modulus}
  |\ba(x,z_1, \xi) -\ba(x, z_2,\xi)| \leq \omega(|z_1 - z_2|)\, |\xi|^{p-1}  \quad \forall z_1, z_2\in  \overline{\K}.
 \end{equation}

Our aim is to approximate  $\nabla u$ 
by a good gradient in $L^p$ norm, and the following lemma is the starting point for that purpose.
\begin{lemma}\label{lm:compare-gradient-solution}
Assume that $\A$ satisfies \eqref{structural-reference-1}--\eqref{structural-reference-2}, $\ba \in \G_{B_3}(\eta)$,  $M_0>0$, $\lambda>0$, and $0<\theta\leq 1$. Let   $u\in  W^{1,p}_{loc}(B_4)$
be a weak solution  of \eqref{eq-for-u}
with  $\fint_{B_4}{|\nabla u|^p\, dx}\leq 1$ and $\|\F\|_{L^{\frac{p}{p-1}}(B_4)}\leq 1$.
Then for any weak solution  $v\in  W^{1,p}(B_3)$ of  
\eqref{eq-for-v}
satisfying  $\|v\|_{L^\infty(B_3)}\leq \frac{M_0}{\lambda \theta}$, we have: 
\begin{enumerate}
 \item[(i)] If $p\geq 2$, then 
 \begin{align*}
\int_{B_2}{ |\nabla u - \nabla v|^p dx }
\leq C \left\{\int_{B_{\frac52}}{\Big[\omega\big(|\lambda\theta( u-v)|\big) + d_{\A,\ba}(x)\Big]^{\frac{p}{p-1}}   dx}  +  \int_{B_{\frac52}}{|\F|^{\frac{p}{p-1}} dx}\right\}
 +
C \|u-v\|_{L^p(B_{\frac52})}.
 \end{align*}
\item[(ii)] If $1<p<2$, then 
\begin{align*}
\int_{B_2} |\nabla u -\nabla  v|^p  dx 
&\leq  \frac{C}{ \sigma^{\frac{p}{p-1}}} \left\{\int_{B_{\frac52}}{\Big[\omega\big(|\lambda\theta( u-v)|\big) + d_{\A,\ba}(x)\Big]^{\frac{p}{p-1}}    dx}+
\int_{B_{\frac52}}{|\F|^{\frac{p}{p-1}}  dx}\right\}
\\
&\quad +C\sigma^{\frac{p}{2-p}}  + \frac{C}{\sigma}\|u-v\|_{L^p(B_{\frac52})}\qquad \qquad\qquad\qquad\qquad\mbox{for every $\sigma>0$ small.}
\end{align*}
\end{enumerate}
Here  the constant $C>0$ depends only on $p$, $n$, $\eta$, $\Lambda$, $\K$ and $M_0$.
\end{lemma}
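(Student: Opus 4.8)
\textbf{Proof plan for Lemma~\ref{lm:compare-gradient-solution}.}
The plan is to test the difference of the two equations satisfied by $u$ and $v$ against a localized version of $u-v$ and then exploit the monotonicity/ellipticity condition \eqref{structural-reference-1} to control $\int |\nabla u - \nabla v|^p$. Precisely, fix a cutoff $\phi\in C_0^\infty(B_{5/2})$ with $\phi\equiv 1$ on $B_2$ and $0\le \phi\le 1$, and use $\phi^p(u-v)$ as a test function in both \eqref{eq-for-u} and \eqref{eq-for-v} (after clearing the factor $\lambda^{p-1}$ and writing the equations in terms of $\hat\A(x,z,\xi):=\A(x,z,\lambda\xi)/\lambda^{p-1}$, $\hat\ba$ likewise, which still satisfy \eqref{structural-reference-1}--\eqref{structural-reference-3}). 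Subtracting gives an identity whose left side, after adding and subtracting $\hat\ba(x,\lambda\theta u,\lambda\nabla v)$, contains the good term $\int \langle \hat\ba(x,\lambda\theta u,\lambda\nabla u)-\hat\ba(x,\lambda\theta u,\lambda\nabla v),\nabla u-\nabla v\rangle \phi^p\,dx$, which by \eqref{structural-reference-1} dominates (a constant times) $\int (|\nabla u|+|\nabla v|)^{p-2}|\nabla u-\nabla v|^2\phi^p\,dx$; when $p\ge 2$ this is $\gtrsim \int|\nabla u-\nabla v|^p\phi^p$ directly, while for $1<p<2$ one invokes Lemma~\ref{simple-est} to absorb.

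The error terms on the right split into four types. First, the commutator in the $z$-variable: $\langle \hat\A(x,\lambda\theta u,\lambda\nabla u)-\hat\ba(x,\lambda\theta u,\lambda\nabla u),\nabla u-\nabla v\rangle$, bounded by $d_{\A,\ba}(x)\,|\nabla u|^{p-1}|\nabla u-\nabla v|$ using the definition of $d_{\A,\ba}$. Second, the $z$-continuity error $\langle \hat\ba(x,\lambda\theta u,\lambda\nabla v)-\hat\ba(x,\lambda\theta v,\lambda\nabla v),\nabla u-\nabla v\rangle$, bounded via \eqref{modified-modulus} by $\omega(|\lambda\theta(u-v)|)\,|\nabla v|^{p-1}|\nabla u-\nabla v|$. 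Third, the $\F$ term $\langle\F,\nabla u-\nabla v\rangle\phi^p$. Fourth, the terms coming from derivatives falling on $\phi$, namely $p\int\langle\cdots,\nabla\phi\rangle\phi^{p-1}(u-v)$, which are estimated by $C\|u-v\|_{L^p(B_{5/2})}$ after using the $L^\infty$-type growth \eqref{structural-reference-2} together with the a priori bounds $\fint_{B_4}|\nabla u|^p\le 1$, $\|\F\|_{L^{p/(p-1)}(B_4)}\le 1$, and the bound $\int_{B_3}|\nabla v|^p\,dx\le C$ obtained from Proposition~\ref{pro:gradient-est} applied to \eqref{eq-for-v} with right-hand side zero. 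To pass from the $|\nabla u|^{p-1}$ and $|\nabla v|^{p-1}$ weights in the first three error terms to clean $L^{p/(p-1)}$ norms of $\omega(\cdots)+d_{\A,\ba}$ and of $\F$, I apply Young's inequality $ab\le \e a^{p'}+C_\e b^p$ to split off $|\nabla u-\nabla v|^p$ (absorbed into the left side) and the factors $(|\nabla u|+|\nabla v|)^p$ which are integrable with a universal bound.

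For part (i) ($p\ge 2$) this Young-inequality step is immediate since the ellipticity already yields $\int|\nabla u-\nabla v|^p\phi^p$ on the left; one just chooses $\e$ small, absorbs, and collects the remaining terms, noting $\omega$ is bounded (so $\omega(\cdots)^{p/(p-1)}$ is integrable), which gives exactly the claimed inequality. For part (ii) ($1<p<2$) the extra parameter $\sigma$ enters precisely because Lemma~\ref{simple-est} with parameter $\tau$ trades $\int|\nabla u-\nabla v|^p\phi$ against $\tau\int|\nabla u|^p\phi$ plus a constant $\sim\tau^{-(2-p)/p}$ times the monotone bilinear quantity; after also handling the error terms by Young's inequality in the degenerate regime (where one writes $(|\nabla u|+|\nabla v|)^{p-2}|\nabla u-\nabla v|^2$ and dualizes), one is led to a term $C\sigma^{p/(2-p)}$ coming from the $\tau\int|\nabla u|^p$ piece (with $\sigma$ a renormalization of $\tau$) and to the factor $\sigma^{-p/(p-1)}$ in front of the oscillation terms. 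Matching exponents carefully gives the stated form. The main obstacle I anticipate is purely bookkeeping in the $1<p<2$ case: organizing the iterated applications of Lemma~\ref{simple-est} and Young's inequality so that every occurrence of $\int|\nabla u-\nabla v|^p\phi^p$ can be absorbed into the left-hand side while the leftover powers of the auxiliary parameter collapse to the single clean factor $\sigma^{p/(2-p)}$ and $\sigma^{-p/(p-1)}$ advertised in the statement; keeping track of the $\phi$ versus $\phi^p$ weights (Lemma~\ref{simple-est} is stated for a weight $\phi$, so one should test with $\phi^p(u-v)$ and treat $\phi^p$ as the admissible nonnegative weight) is the delicate point.
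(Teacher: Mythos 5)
Your overall skeleton (test with $\varphi^p(u-v)$, isolate a monotone term, control the remainder with Young's inequality and Lemma~\ref{simple-est}) matches the paper's, but there are two concrete gaps that prevent the argument from producing the stated right-hand side. First, you never use hypothesis ({\bf H3}): the only information you extract about $v$ is the energy bound $\int_{B_3}|\nabla v|^p\,dx\le C$ from Proposition~\ref{pro:gradient-est}. The terms $\int_{B_{5/2}}\big[\omega(|\lambda\theta(u-v)|)+d_{\A,\ba}\big]^{\frac{p}{p-1}}dx$ in the conclusion carry no gradient weight, and the only way to strip the weight $|\nabla v|^{p-1}$ off the $\omega$ and $d_{\A,\ba}$ factors is the pointwise bound $\|\nabla v\|_{L^\infty(B_{5/2})}\le C(p,n,\eta,\Lambda,\K,M_0)$, which is exactly what ({\bf H3}) gives after the rescaling $\bar v(y)=\lambda\theta v(y/\theta)$; this is where the hypotheses $\|v\|_{L^\infty(B_3)}\le M_0/(\lambda\theta)$ and $\ba\in\G_{B_3}(\eta)$ enter, and your proposal invokes neither. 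With only $\nabla v\in L^p$, Young's inequality leaves you with $\int\omega(\cdots)^{\frac{p}{p-1}}|\nabla v|^p$, which is bounded but not controlled by $\int\omega(\cdots)^{\frac{p}{p-1}}$ — and it is precisely the smallness of the latter that is needed when the lemma is fed into Corollary~\ref{cor:compare-gradient}.

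Second, your decomposition places the commutator at $\nabla u$: you bound $\hat\A(x,\lambda\theta u,\lambda\nabla u)-\hat\ba(x,\lambda\theta u,\lambda\nabla u)$ by $d_{\A,\ba}(x)|\nabla u|^{p-1}$. Since $\nabla u$ is not bounded (and no higher integrability is available at this stage), the resulting term $\int d_{\A,\ba}^{\frac{p}{p-1}}|\nabla u|^p$ cannot be reduced to $\int d_{\A,\ba}^{\frac{p}{p-1}}$. The paper instead writes
\begin{align*}
\hat\A(x,\lambda\theta u,\nabla u)-\hat\ba(x,\lambda\theta v,\nabla v)
&=\big[\hat\A(x,\lambda\theta u,\nabla u)-\hat\A(x,\lambda\theta u,\nabla v)\big]
+\big[\hat\A(x,\lambda\theta u,\nabla v)-\hat\ba(x,\lambda\theta u,\nabla v)\big]\\
&\quad+\big[\hat\ba(x,\lambda\theta u,\nabla v)-\hat\ba(x,\lambda\theta v,\nabla v)\big],
\end{align*}
so that the monotone piece is the one in $\A$ (which satisfies \eqref{structural-reference-1}) and \emph{both} error pieces are evaluated at $\nabla v$, where the Lipschitz bound applies. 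Once you make these two corrections, the rest of your outline (H\"older for the cutoff terms, Young's inequality with parameter $\sigma$, and Lemma~\ref{simple-est} with $\tau^{\frac{2-p}{p}}=2\sigma$ in the case $1<p<2$) goes through as in the paper.
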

\begin{proof}
Observe that if we let $\bar v(y) := \lambda \theta v(y/\theta)$, then  $\|\bar v\|_{L^\infty(B_{3\theta})}\leq M_0$ and
  $\bar v $ is a weak solution of $\div \ba(y,  \bar v , \nabla  \bar v)=0$ in $B_{3\theta}$. Thus   assumption ({\bf H3})  about  interior $W^{1,\infty}$-estimates gives 
 \begin{equation}\label{Lipschitz-1}
 \| \nabla  \bar v\|_{L^\infty(B_{\frac{5\theta}{2}})}^p\leq C(p,n,\eta, \Lambda, \K, M_0) \fint_{B_{3\theta}} |\nabla  \bar v|^p \, dx.
 \end{equation}
 On the other hand, it follows from Proposition~\ref{pro:gradient-est} and  the assumptions  that 
 \[
 \|\nabla v\|_{L^p(B_3)}\leq C \|\nabla u\|_{L^p(B_3)} \leq C(p,n, \Lambda).
 \]
Therefore, we  have from \eqref{Lipschitz-1} by rescaling back that
 \begin{equation}\label{Lipschitz-2}
 \| \nabla  v\|_{L^\infty(B_{\frac52})}^p\leq C(p,n,\eta, \Lambda, \K, M_0) \fint_{B_{3}} |\nabla   v|^p \, dx \leq C(p,n,\eta, \Lambda, \K, M_0).
 \end{equation}
 Next for convenience,  set
\[
\hat \A (x, z,\xi) := \frac{\A(x,z,\lambda \xi)}{\lambda^{p-1}}\quad \mbox{and}\quad \hat \ba (x, z,\xi) := \frac{\ba(x,z,\lambda \xi)}{\lambda^{p-1}}.
\]
Let $\varphi$ be the standard nonnegative cut-off function which is $1$ on $B_2$ and $\text{supp}(\varphi)\subset B_{\frac52}$.
Then by using $\varphi^p (u-v)$ as a test function in the equations for $u$ and $v$, we have
\begin{align*}
&\int_{B_{\frac52}}{ \langle \hat \A(x, \lambda \theta u, \nabla u) , \nabla u - \nabla v\rangle \varphi^p \, dx }
=-p\int_{B_{\frac52}}{ \langle \hat \A(x, \lambda \theta u, \nabla u) , \nabla \varphi\rangle(u-v)\varphi^{p-1} \, dx }\\
&+p\int_{B_{\frac52}}{ \langle \hat \ba(x, \lambda\theta v, \nabla v) , \nabla \varphi\rangle(u-v)\varphi^{p-1} \, dx }
+\int_{B_{\frac52}}{ \langle \hat \ba(x,\lambda\theta v,\nabla v), \nabla u - \nabla v\rangle \varphi^p  dx }\\
&+\int_{B_{\frac52}}{\langle \F, \nabla u - \nabla v\rangle  \varphi^p dx}
+p\int_{B_{\frac52}}{\langle \F, \nabla \varphi\rangle (u-v) \varphi^{p-1} dx}.
\end{align*}
This gives
\begin{align*}
 I &:=\int_{B_{\frac52}}{ \langle \hat\A(x, \lambda\theta u, \nabla u) -\hat \A(x,\lambda\theta u,\nabla v), \nabla u - \nabla v \rangle \varphi^p\, dx }\\
 &=p\int_{B_{\frac52}}{ \langle \hat \ba(x, \lambda\theta v, \nabla v)- \hat \A(x, \lambda\theta u, \nabla u) , \nabla \varphi\rangle(u-v)\varphi^{p-1} \, dx }\\
 &\quad  +\int_{B_{\frac52}}{ \langle \hat \ba(x, \lambda\theta v, \nabla v) -\hat \ba(x,\lambda\theta u,\nabla v), \nabla u - \nabla v \rangle \varphi^p \, dx}\\
 &\quad -\int_{B_{\frac52}}{ \langle\hat \A(x, \lambda\theta u, \nabla v) -\hat \ba(x,\lambda\theta u,\nabla v), \nabla u - \nabla v\rangle \varphi^p \, dx }\\
&\quad +\int_{B_{\frac52}}{\langle \F, \nabla u - \nabla v\rangle \varphi^p\, dx}
+p\int_{B_{\frac52}}{\langle \F, \nabla \varphi\rangle (u-v) \varphi^{p-1} dx}.
\end{align*}
We deduce   from this, the structural conditions, \eqref{modified-modulus} and  \eqref{Lipschitz-2}  that
\begin{align}\label{homo-est1}
I&\leq p\Lambda \int_{B_{\frac52}}{ \big( |\nabla v|^{p-1} + |\nabla u|^{p-1}\big) |\nabla \varphi| |u-v|\varphi^{p-1} \, dx }\\
&\quad +\Lambda  \int_{B_{\frac52}}{\frac{\omega\big(|\lambda\theta( u-v)|\big)}{\lambda^{p-1}}  |\nabla (\lambda v)|^{p-1} | \nabla u - \nabla v|  \varphi^p\, dx}
+ \int_{B_{\frac52}}{d_{\A,\ba}(x)  |\nabla v|^{p-1} | \nabla u - \nabla v|  \varphi^p\, dx}\nonumber\\
&\quad+\int_{B_{\frac52}}{|\F| |\nabla u - \nabla v| \varphi^p\, dx}
+p\int_{B_{\frac52}}{|\F|| | \nabla \varphi| |u-v| \varphi^{p-1} dx}\nonumber\\
&\leq C\int_{B_{\frac52}}{\Big[\omega\big(|\lambda\theta( u-v)|\big)|  + d_{\A,\ba}(x)\Big] | \nabla u - \nabla v|  \varphi^p \, dx}+ \int_{B_{\frac52}}{|\F| |\nabla u - \nabla v|  \varphi^p \, dx}\nonumber\\
& \quad + C\Big(\|\nabla v\|_{L^p(B_{\frac52})} + \|\nabla u \|_{L^p(B_{\frac52})} +\|\F\|_{L^{\frac{p}{p-1}}(B_{\frac52})} \Big) \|u-v\|_{L^p(B_{\frac52})} \nonumber\\
&\leq \frac{2\sigma}{p} \int_{B_{\frac52}}{| \nabla u - \nabla v|^p\varphi^p\, dx} +C\frac{p-1}{p \sigma^{\frac{1}{p-1}}} \int_{B_{\frac52}}{\Big[\omega\big(|\lambda\theta( u-v)|\big)
 + d_{\A,\ba}(x)\Big]^{\frac{p}{p-1}}  dx}\nonumber\\
&\quad 
+\frac{p-1}{p \sigma^{\frac{1}{p-1}}}\int_{B_{\frac52}}{|\F|^{\frac{p}{p-1}} dx}
+ C \|u-v\|_{L^p(B_{\frac52})} \nonumber
\end{align}
for any $\sigma>0$.

Now if $p\geq 2$, then  \eqref{structural-reference-1}  implies
$\Lambda^{-1}\int_{B_{\frac52}}{ |\nabla u - \nabla v|^p\varphi^p dx }\leq I$. Hence by combining with \eqref{homo-est1} and choosing $\sigma>0$ sufficiently small, we obtain
\begin{align*}
\int_{B_{\frac52}}{ |\nabla u - \nabla v|^p\varphi^p dx }
\leq C \left \{\int_{B_{\frac52}}{\Big[\omega\big(|\lambda\theta( u-v)|\big)  + d_{\A,\ba}(x)\Big]^{\frac{p}{p-1}}   dx} +  \int_{B_{\frac52}}{|\F|^{\frac{p}{p-1}} dx}\right\}
 +
C \|u-v\|_{L^p(B_{\frac52})}
\end{align*}
giving  $(i)$.
On the other hand,  if $1<p<2$ then  Lemma~\ref{simple-est} yields
\[
c \tau^{\frac{2-p}{p}} \int_{B_{\frac52}} |\nabla u -\nabla  v|^p \varphi^p dx 
- \tau^{\frac{2}{p}} \int_{B_{\frac52}} |\nabla u|^p \varphi^p dx
 \leq I
\]
for all $\tau>0$ small. By combining this with the assumptions and \eqref{homo-est1} we deduce that
\begin{align*}
(\tau^{\frac{2-p}{p}}-\sigma)  \int_{B_{\frac52}} |\nabla u-\nabla  v|^p\varphi^p  dx 
&\leq \frac{C}{ \sigma^{\frac{1}{p-1}}} \left\{\int_{B_{\frac52}}{\Big[\omega\big(|\lambda\theta( u-v)|\big)  + d_{\A,\ba}(x)\Big]^{\frac{p}{p-1}}   dx}+
\int_{B_{\frac52}}{|\F|^{\frac{p}{p-1}}  dx}\right\}\\
&\quad +C\tau^{\frac{2}{p}}   + C\|u-v\|_{L^p(B_{\frac52})}.
\end{align*}
It then follows by taking $\tau^{\frac{2-p}{p}}=2\sigma$ that
\begin{align*}
\int_{B_{\frac52}} |\nabla u -\nabla  v|^p\varphi^p  dx 
&\leq  \frac{C}{ \sigma^{\frac{p}{p-1}}} \left\{\int_{B_{\frac52}}{\Big[\omega\big(|\lambda\theta( u-v)|\big)  + d_{\A,\ba}(x)\Big]^{\frac{p}{p-1}}    dx}+
\int_{B_{\frac52}}{|\F|^{\frac{p}{p-1}}  dx}\right\}\\
&\quad +C\sigma^{\frac{p}{2-p}}  + \frac{C}{\sigma}\|u-v\|_{L^p(B_{\frac52})}
\end{align*}
for every $\sigma>0$ small.
\end{proof}

As a consequence of Lemma~\ref{lm:compare-solution} and Lemma~\ref{lm:compare-gradient-solution}, we obtain:
\begin{corollary}
\label{cor:compare-gradient}  Let $\A$ satisfy \eqref{structural-reference-1}--\eqref{structural-reference-3}, $\ba \in \G_{B_3}(\eta)$, and $M_0>0$.
For any $\e>0$, there exists $\delta>0$ depending only on $\e$,  $\Lambda$, $p$, $\eta$,  $n$, $\K$ and $M_0$  such that:  if $ \lambda>0$, $0<\theta\leq 1$,
\begin{equation*}
\fint_{B_3}  d_{\A,\ba}(x) \, dx  \leq \delta,\qquad
 \fint_{B_4}  |\F|^{\frac{p}{p-1}} \, dx  \leq \delta,
\end{equation*}
and $u\in W_{loc}^{1,p}(B_4)$ is a weak solution of \eqref{eq-for-u} satisfying
\begin{equation*}
 \Big(\fint_{B_4}{|u|^p\, dx}\Big)^{\frac1p}\leq \frac{M_0}{\lambda \theta}  \quad\mbox{ and }\quad \fint_{B_4}{|\nabla  u|^p \, dx}\leq 1,
\end{equation*}
and $v\in  W^{1,p}(B_3)$ is a weak solution of
\eqref{eq-for-v}
 with   $\|v\|_{L^\infty(B_3)}\leq \frac{M_0}{\lambda \theta}$,
then 
\begin{equation*}
\int_{B_2}{|\nabla  u - \nabla  v|^p\, dx}\leq \e^p.
\end{equation*}
\end{corollary}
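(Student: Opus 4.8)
The plan is to deduce the corollary from Lemma~\ref{lm:compare-solution} and Lemma~\ref{lm:compare-gradient-solution} by a contradiction/compactness argument of the same type as the one used to prove Lemma~\ref{lm:compare-solution}; this automatically forces the threshold $\delta$ to depend only on the listed quantities. So I would suppose the conclusion fails: there exist $\e_0>0$, positive numbers $\lambda_k$, numbers $\theta_k\in(0,1]$, vector fields $\A_k$ satisfying \eqref{structural-reference-1}--\eqref{structural-reference-3} and $\ba_k\in\G_{B_3}(\eta)$, data $\F_k$ with $\fint_{B_4}|\F_k|^{\frac{p}{p-1}}\,dx\le 1/k$ and $\fint_{B_3} d_{\A_k,\ba_k}(x)\,dx\le 1/k$, weak solutions $u^k$ of \eqref{eq-for-u} (with $\A_k,\F_k$) satisfying $(\fint_{B_4}|u^k|^p\,dx)^{1/p}\le M_0/(\lambda_k\theta_k)$ and $\fint_{B_4}|\nabla u^k|^p\,dx\le 1$, and weak solutions $v^k$ of \eqref{eq-for-v} (with $\ba_k$) satisfying $\|v^k\|_{L^\infty(B_3)}\le M_0/(\lambda_k\theta_k)$, yet $\int_{B_2}|\nabla u^k-\nabla v^k|^p\,dx>\e_0^p$ for every $k$.

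The first step is to note that $\alpha_k:=\lambda_k\theta_k$ stays bounded. Arguing exactly as in the proof of Lemma~\ref{lm:compare-solution} (applying Lemma~\ref{lm:bound-solution} to the rescaled vector fields, which satisfy \eqref{structural-reference-1}--\eqref{structural-reference-2} with the same constants), one obtains $\int_{B_2}|\nabla u^k-\nabla v^k|^p\,dx\le\int_{B_3}|\nabla u^k-\nabla v^k|^p\,dx\le C\int_{B_4}\big(|u^k|^p+|\F_k|^{\frac{p}{p-1}}\big)\,dx\le C\big[(M_0/\alpha_k)^p+1/k\big]$, which together with $\int_{B_2}|\nabla u^k-\nabla v^k|^p\,dx>\e_0^p$ forces $\alpha_k\le M_1=M_1(p,n,\Lambda,M_0,\e_0)$. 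Next, since $\max\{\fint_{B_3}d_{\A_k,\ba_k},\fint_{B_4}|\F_k|^{\frac{p}{p-1}}\}\le 1/k\to 0$, applying Lemma~\ref{lm:compare-solution} with $\e=1/m$ (and $k$ large) shows $\|u^k-v^k\|_{L^p(B_3)}\to 0$; combined with the boundedness of $\alpha_k$ this yields $\alpha_k(u^k-v^k)\to 0$ in $L^p(B_3)$.

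I would then apply Lemma~\ref{lm:compare-gradient-solution} to $(\A_k,\ba_k,\F_k,u^k,v^k)$, whose hypotheses hold for $k$ large (note $\|\F_k\|_{L^{p/(p-1)}(B_4)}\le 1$ once $k\ge|B_4|$), to get, when $p\ge 2$,
\[
\int_{B_2}|\nabla u^k-\nabla v^k|^p\,dx\le C\Big\{\int_{B_{5/2}}\big[\omega(|\alpha_k(u^k-v^k)|)+d_{\A_k,\ba_k}(x)\big]^{\frac{p}{p-1}}\,dx+\int_{B_{5/2}}|\F_k|^{\frac{p}{p-1}}\,dx\Big\}+C\|u^k-v^k\|_{L^p(B_{5/2})},
\]
and the analogue for $1<p<2$ with an extra additive term $C\sigma^{\frac{p}{2-p}}$ and extra factors $\sigma^{-\frac{p}{p-1}}$, $\sigma^{-1}$; in that case I would first fix $\sigma$ so small that $C\sigma^{\frac{p}{2-p}}<\e_0^p/2$. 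On the right-hand side, $\int_{B_{5/2}}|\F_k|^{\frac{p}{p-1}}\to 0$ and $\|u^k-v^k\|_{L^p(B_{5/2})}\to 0$ are immediate, while $\int_{B_{5/2}}d_{\A_k,\ba_k}^{\frac{p}{p-1}}\,dx\to 0$ follows from the pointwise bound $d_{\A_k,\ba_k}\le 2\Lambda$ (a consequence of \eqref{structural-reference-2} applied to $\A_k$ and $\ba_k$), which gives $d_{\A_k,\ba_k}^{\frac{p}{p-1}}\le(2\Lambda)^{\frac{1}{p-1}}d_{\A_k,\ba_k}$.

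The main obstacle is the term $\int_{B_{5/2}}\omega(|\alpha_k(u^k-v^k)|)^{\frac{p}{p-1}}\,dx$: because $\alpha_k$ can a priori be large, smallness of $\|u^k-v^k\|_{L^p}$ alone does not make this small, which is exactly why the contradiction framework and the boundedness of $\alpha_k$ are needed. Having $\alpha_k(u^k-v^k)\to 0$ in $L^p(B_3)$, I would argue that this integral tends to $0$: along any subsequence there is a further subsequence with $\alpha_k(u^k-v^k)\to 0$ a.e. in $B_3$, hence $\omega(|\alpha_k(u^k-v^k)|)^{\frac{p}{p-1}}\to 0$ a.e.; since $0\le\omega\le 2\Lambda$ by \eqref{modified-Lipschitz}, the dominated convergence theorem gives the claim along that further subsequence, and therefore along the whole sequence. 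Inserting all these limits into the displayed inequality yields $\int_{B_2}|\nabla u^k-\nabla v^k|^p\,dx\to 0$ (at most $\e_0^p/2+o(1)$ when $1<p<2$), which contradicts $\int_{B_2}|\nabla u^k-\nabla v^k|^p\,dx>\e_0^p$. This contradiction proves the corollary.
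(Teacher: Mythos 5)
Your proof is correct, and it rests on the same three ingredients as the paper's own argument --- Lemma~\ref{lm:bound-solution} to control $\lambda\theta$, Lemma~\ref{lm:compare-gradient-solution} for the gradient comparison, and Lemma~\ref{lm:compare-solution} to make $\|u-v\|_{L^p(B_3)}$ small --- but it packages them differently. The paper argues directly and quantitatively: it first disposes of the regime $\lambda\theta>(2C^*)^{1/p}M_0/\e$ outright via Lemma~\ref{lm:bound-solution}, and in the complementary regime it tames the problematic term $\omega\big(|\lambda\theta(u-v)|\big)^{\frac{p}{p-1}}$ through the explicit splitting $\omega(s)\leq\tau+2\Lambda^p s^{p-1}/\tau^{p-1}$, which, using the a priori bound on $\lambda\theta$, converts that term into $\tau+C(\tau,\e)\,|u-v|^p$ and is then absorbed by choosing $\sigma$, $\tau$ and the tolerance $\e'$ for Lemma~\ref{lm:compare-solution} in the right order; this produces an (in principle traceable) $\delta$. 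You instead run a compactness/contradiction argument: the boundedness of $\alpha_k=\lambda_k\theta_k$ is extracted from the contradiction hypothesis rather than treated as a case split, and the $\omega$-term is killed by passing to a.e.-convergent subsequences and dominating by the constant $2\Lambda$ from \eqref{modified-Lipschitz}. This is cleaner --- no juggling of $\sigma$, $\tau$, $\e'$ --- at the price of being non-constructive in $\delta$; your treatments of the remaining terms (the reduction $d_{\A,\ba}^{\frac{p}{p-1}}\leq(2\Lambda)^{\frac{1}{p-1}}d_{\A,\ba}$, the $\F$-term, and fixing $\sigma$ first when $1<p<2$) coincide with the paper's. One point worth making explicit if you write this up: Lemmas~\ref{lm:bound-solution} and \ref{lm:compare-gradient-solution} are invoked along a sequence of data, so you are tacitly using that the rescaled vector fields $\A_k(x,z,\lambda_k\xi)/\lambda_k^{p-1}$ satisfy \eqref{structural-reference-1}--\eqref{structural-reference-3} with the \emph{same} constant $\Lambda$ (as noted in Subsection~\ref{sub:eqparameters}), so that all constants in those lemmas are uniform in $k$.
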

\begin{proof} We will present the proof only for the case $1<p<2$ as the case $p\geq 2$ is simpler. Let $\e>0$ be arbitrary. By Lemma~\ref{lm:bound-solution} and the assumptions, we have
\begin{equation*}
 \int_{B_3}{|\nabla u -\nabla v|^p\, dx}\leq C(p,n,\Lambda)\int_{B_4} \big( |u|^p + |\F|^{\frac{p}{p-1}} \big) \, dx
 \leq   C^* \, \Big[\big(\frac{M_0}{\lambda\theta}\big)^p +\delta\Big].
\end{equation*}
Therefore, the conclusion of the lemma follows if $\lambda \theta > \frac{(2C^*)^{1/p} M_0}{\e}$. Thus, it remains to consider the case
\begin{equation}\label{lambda-theta-restriction}
 \lambda \theta \leq \frac{(2C^*)^{\frac1p} M_0}{\e}.
\end{equation}
Now from Lemma~\ref{lm:compare-gradient-solution}(ii) and the boundedness of $d_{\A,\ba}(x)$ we get
\begin{align}\label{first-conse}
\int_{B_2} |\nabla u -\nabla  v|^p  dx 
&\leq  \frac{C}{ \sigma^{\frac{p}{p-1}}} \left\{\int_{B_{\frac52}}{\omega\big(|\lambda\theta( u-v)|\big)^{\frac{p}{p-1}}  \,  dx}+
\int_{B_{\frac52}}{\big(d_{\A,\ba} + |\F|^{\frac{p}{p-1}}\big) \, dx}\right\}\\
&\quad +C\sigma^{\frac{p}{2-p}}  + \frac{C}{\sigma}\|u-v\|_{L^p(B_{\frac52})}\qquad \mbox{for all small}\quad \sigma>0.\nonumber
\end{align}
Notice that for any $\tau>0$ small, from the definition of $\omega$ in \eqref{modified-Lipschitz} we have
\[
\omega(s)\leq \tau + \frac{2\, \Lambda^p}{\tau^{p-1}}s^{p-1}\quad \forall s\geq 0.
\]
Therefore by combining with \eqref{lambda-theta-restriction}, one easily sees that 
\[
\omega\big(|\lambda\theta( u-v)|\big)^{\frac{p}{p-1}} \leq \tau + \frac{C}{\tau^p} (\lambda\theta)^p|u-v|^p
\leq \tau + \frac{C}{\tau^p\e^p} |u-v|^p\quad \forall \tau>0 \,\mbox{ small}.
\]
Hence  by first selecting $\sigma =\sigma(\epsilon, p)>0$ small such that $C\sigma^{\frac{p}{2-p}} \leq \epsilon^p/5$ and then choosing $\tau>0$ such that $C\tau /\sigma^{\frac{p}{p-1}}\leq \epsilon^p/5$, we conclude from \eqref{first-conse} that
 \begin{align}\label{second-conse}
\int_{B_2} |\nabla u -\nabla  v|^p  dx 
\leq  2\frac{\epsilon^p}{5}
+ \frac{C }{ \tau^p\e^p \sigma^{\frac{p}{p-1}}}\|u-v\|_{L^p(B_{\frac52})}^p+ \frac{C\delta }{ \sigma^{\frac{p}{p-1}}}
 + \frac{C}{\sigma}\|u-v\|_{L^p(B_{\frac52})}.
\end{align}
Next let us pick $\epsilon'>0$ small so that
\[
\frac{C }{ \tau^p\e^p \sigma^{\frac{p}{p-1}}} (\epsilon')^p\leq \frac{\epsilon^p}{5}\quad \mbox{and}\quad  \frac{C}{\sigma} \epsilon'\leq \frac{\epsilon^p}{5}.
\]
Then by Lemma~\ref{lm:compare-solution} there exists $\delta'>0$ such that $\|u-v\|_{L^p(B_3)}\leq \epsilon'$ if $\fint_{B_3}d_{\A,\ba}(x)\, dx \leq \delta'$ and  $\fint_{B_4}|\F|^{\frac{p}{p-1}} \, dx\leq \delta'$. Thus, by taking $\delta :=\min{\big\{\delta', \frac{\epsilon^p \sigma^{\frac{p}{p-1}}}{5C}\big\}}$ we obtain the  corollary from \eqref{second-conse}.
 \end{proof}

The next result is a localized version of Corollary~\ref{cor:compare-gradient}.

\begin{lemma}
\label{lm:localized-compare-gradient}  Let $\A$ satisfy \eqref{structural-reference-1}--\eqref{structural-reference-3}, and $M_0>0$.
For any $\e>0$, there exists $\delta>0$ depending only on $\e$,  $\Lambda$, $p$,  $\eta$,  $n$, $\K$ and $M_0$ such that:  if $ \lambda>0$, $0<\theta\leq 1$, $0<r\leq 1$,
\begin{equation}\label{localized-AAcCond}
\dist\Big(\A, \G_{B_{3 r}}\Big) < \delta \quad \mbox{and}\quad \fint_{B_{4r}}  |\F|^{\frac{p}{p-1}} \, dx  \leq \delta,
\end{equation}
and $u\in W_{loc}^{1,p}(B_{4 r})$ is a weak solution of $\div \Big[\frac{\A(x,\lambda\theta u, \lambda \nabla u)}{\lambda^{p-1}}\Big] = \div \F\,$ in $B_{4 r}$ satisfying
\begin{equation*}
 \|u\|_{L^\infty(B_{4r})}\leq \frac{M_0}{\lambda\theta} \quad\mbox{ and }\quad \fint_{B_{4 r}}{|\nabla  u|^p \, dx}\leq 1,
\end{equation*}
then 
\begin{equation}\label{gradients-close}
\fint_{B_{2 r}}{|\nabla  u - \nabla  v|^p\, dx}\leq \e^p
\end{equation}
for some function $v\in  W^{1,p}(B_{3 r})$  with
\begin{equation}\label{property-for-v}
\|\nabla  v\|_{L^\infty(B_{\frac{3 r}{2}})}^p
\leq C(p,n,\eta, \Lambda, \K, M_0) \fint_{B_{2r}}{|\nabla  v|^p \, dx}.
\end{equation}
\end{lemma}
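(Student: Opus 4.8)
The plan is to reduce Lemma~\ref{lm:localized-compare-gradient} to Corollary~\ref{cor:compare-gradient} by the standard rescaling that sends $B_{3r}$ to $B_3$. First I would pick an optimal (or near-optimal) reference vector field: by the definition of $\dist(\A,\G_{B_{3r}})$ and \eqref{localized-AAcCond}, choose $\ba \in \G_{B_{3r}}(\eta)$ with
\[
\fint_{B_{3r}} d_{\A,\ba}(x)\, dx < \delta.
\]
By Definition~\ref{def:rescaled-class}(i), $\ba(x,z,\xi) = \ba'\big(\tfrac{x}{r},z,\xi\big)$ for some $\ba' \in \G_{B_3}(\eta)$ (taking $y=0$, $\rho = 3r$). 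Then I would introduce the rescaled function $\tilde u(y) := u(ry)/r$ for $y \in B_4$. A direct computation shows $\tilde u$ is a weak solution of $\div\big[\A'(y,\lambda\theta' \tilde u, \lambda\nabla\tilde u)/\lambda^{p-1}\big] = \div\tilde\F$ in $B_4$, where $\A'(y,z,\xi) := \A(ry,z,\xi)$, $\theta' := r\theta$, and $\tilde\F(y) := \F(ry)$; here I use $0<\theta'\le 1$ since $0<\theta\le1$ and $0<r\le1$. The averaged quantities transform correctly: $\fint_{B_3} d_{\A',\ba'}(y)\,dy = \fint_{B_{3r}} d_{\A,\ba}(x)\,dx < \delta$, $\fint_{B_4}|\tilde\F|^{p/(p-1)}\,dy = \fint_{B_{4r}}|\F|^{p/(p-1)}\,dx \le \delta$, and $\fint_{B_4}|\nabla\tilde u|^p\,dy = \fint_{B_{4r}}|\nabla u|^p\,dx \le 1$. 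Also $\big(\fint_{B_4}|\tilde u|^p\,dy\big)^{1/p} \le \|\tilde u\|_{L^\infty(B_4)} = \frac1r\|u\|_{L^\infty(B_{4r})} \le \frac{M_0}{r\lambda\theta} = \frac{M_0}{\lambda\theta'}$, which is the hypothesis of Corollary~\ref{cor:compare-gradient} with $M_0$ unchanged.

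Next I would let $\tilde v \in W^{1,p}(B_3)$ be the weak solution of the Dirichlet problem $\div\big[\ba'(y,\lambda\theta'\tilde v,\lambda\nabla\tilde v)/\lambda^{p-1}\big] = 0$ in $B_3$ with $\tilde v = \tilde u$ on $\partial B_3$; existence and uniqueness is guaranteed by Remark~\ref{existence+uniqueness}. To invoke Corollary~\ref{cor:compare-gradient} I still need the bound $\|\tilde v\|_{L^\infty(B_3)} \le M_0/(\lambda\theta')$. This follows from the comparison principle for $\div\ba'(y,\cdot,\nabla\cdot) = 0$ (also cited in Remark~\ref{existence+uniqueness}): since $\tilde v = \tilde u$ on $\partial B_3$ and $\|\tilde u\|_{L^\infty(\partial B_3)} \le M_0/(\lambda\theta')$, the constants $\pm M_0/(\lambda\theta')$ (which solve the equation trivially, being constants, as $\ba'$ is independent of $z$-shifts only through its structural monotonicity — more precisely one uses that constants are solutions) serve as sub/supersolutions, so $|\tilde v| \le M_0/(\lambda\theta')$ in $B_3$. (Alternatively, since $\ba'$ satisfies \eqref{structural-reference-1}--\eqref{structural-reference-3}, the maximum of $|\tilde v|$ over $\overline{B_3}$ is attained on $\partial B_3$.) With all hypotheses of Corollary~\ref{cor:compare-gradient} verified for the rescaled data and the \emph{same} $\e$, we obtain $\fint_{B_2}|\nabla\tilde u - \nabla\tilde v|^p\,dy \le \e^p$.

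Finally I would undo the rescaling. Define $v(x) := r\,\tilde v(x/r)$ for $x \in B_{3r}$. Then $\nabla v(x) = \nabla\tilde v(x/r)$, so $\fint_{B_{2r}}|\nabla u - \nabla v|^p\,dx = \fint_{B_2}|\nabla\tilde u - \nabla\tilde v|^p\,dy \le \e^p$, which is \eqref{gradients-close}. For \eqref{property-for-v}, note $v$ is a weak solution of $\div\big[\ba(x,\lambda\theta v,\lambda\nabla v)/\lambda^{p-1}\big]=0$ in $B_{3r}$; setting $\bar v(y) := \lambda\theta v(y/\theta)$ as in the proof of Lemma~\ref{lm:compare-gradient-solution}, one gets that $\bar v$ solves $\div\,\ba(y,\bar v,\nabla\bar v)=0$ in $B_{3r\theta}$ with $\ba$ satisfying (\textbf{H1})--(\textbf{H3}) (since $\ba \in \G_{B_{3r}}(\eta)$), and $\|\bar v\|_{L^\infty}\le M_0$ by the bound on $\|\tilde v\|_{L^\infty}$. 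Applying (\textbf{H3}) with $R = r$ (noting $0<r\le1$) at radius $2r$ — or more directly applying the $W^{1,\infty}$-estimate after rescaling to $B_3$ and scaling back, exactly as in \eqref{Lipschitz-1}--\eqref{Lipschitz-2} — yields $\|\nabla v\|_{L^\infty(B_{3r/2})}^p \le C(p,n,\eta,\Lambda,\K,M_0)\fint_{B_{2r}}|\nabla v|^p\,dx$, which is \eqref{property-for-v}. The only delicate point is keeping track of the two parameters: one must check that the parameter $\lambda$ is untouched by the spatial rescaling while $\theta$ becomes $r\theta \le 1$, and that the $L^\infty$-bound on the comparison solution $v$ (needed to apply both Corollary~\ref{cor:compare-gradient} and (\textbf{H3})) survives the rescaling — this is where the comparison principle from Remark~\ref{existence+uniqueness} is essential. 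No estimate degenerates as $r \to 0$ because every bound is scale-invariant by construction of the two-parameter family of equations.
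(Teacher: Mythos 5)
Your proposal is correct and follows essentially the same route as the paper: rescale $B_{3r}\to B_3$ via $\tilde u(y)=u(ry)/r$ with $\theta'=r\theta$, invoke Corollary~\ref{cor:compare-gradient} (whose $\delta$ is uniform in $\lambda,\theta'$), scale back to define $v$, and obtain \eqref{property-for-v} from ({\bf H3}) applied to $\bar v(y)=\lambda\theta' \tilde v(y/\theta')$ on $B_{3\theta'}$ exactly as in \eqref{Lipschitz-1}--\eqref{Lipschitz-2}. The only slip is cosmetic: in the last step ({\bf H3}) is applied with $R=\theta'=r\theta$ (not $R=r$), which your parenthetical "rescale to $B_3$ and scale back" alternative already handles correctly.
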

\begin{proof}
It follows from Definition~\ref{def:rescaled-class} and the first condition in \eqref{localized-AAcCond} that there exists a vector field $\ba'\in \G_{B_3}(\eta)$ such that
\begin{equation*}
 \fint_{B_{3 r}} d_{\A,\ba}(y) \,dy\leq \delta\quad \mbox{with}\quad \ba(y,z,\xi) := \ba'\big(\frac{y}{r},z,\xi\big). 
\end{equation*}
Define
\[ \A'(x, z, \xi) = \A(r x, z, \xi),
\quad \F'(x) = \F(r x),  \quad u'(x) =\frac{u(r x)}{r}.
\]
Let   $\theta' := \theta r\in (0,1]$. Then $u'\in W^{1,p}_{loc}(B_4)$ is a weak solution of
\begin{equation*}
\div \Big[\frac{\A'(x,\lambda\theta' u',\lambda \nabla u')}{\lambda^{p-1}}\Big]=\div \F' \quad\mbox{in}\quad B_4.
\end{equation*}
Notice that  $\|u'\|_{L^\infty(B_4)}\leq \frac{M_0}{\lambda \theta'}$ and
$
d_{\A',\ba'}(x) =d_{\A,\ba}(rx)$.
Thus we also have
\begin{align*}
&\Big(\fint_{B_4}{|u'(x)|^p\, dx}\Big)^{\frac1p}= \frac{1}{r}\Big(\fint_{B_{4 r}} |u(y)|^p\, dy\Big)^\frac1p \leq \frac{M_0}{\lambda\theta'},\qquad \fint_{B_{4}}{|\nabla  u'(x)|^p \, dx}= \fint_{B_{4 r}}{|\nabla  u(y)|^p \, dy}\leq 1,\\
&\fint_{B_{3}}  d_{\A',\ba'}(x)\,dx= \fint_{B_{3r}}  d_{\A,\ba}(y)\,dy, \qquad \fint_{B_{4}}  |\F'(x)|^{\frac{p}{p-1}} \, dx = \fint_{B_{4 r}}  |\F(y)|^{\frac{p}{p-1}} \, dy.
\end{align*}
%where $\omega_n$ denotes the volume of the unit ball in $\R^n$.
Therefore, given any $\e>0$, by Corollary~\ref{cor:compare-gradient} there exists a constant $\delta=\delta(\e,\Lambda, p, \eta,n, \K, M_0)>0$ such that if 
condition \eqref{localized-AAcCond} for $\A$ and  $\F$ is satisfied then we have
\begin{equation}\label{gradient-compare-universal-ball}
\int_{B_{2}}{|\nabla  u'(x) - \nabla  v'(x)|^p\, dx}\leq  2^n \omega_n\e^p,
\end{equation}
where $v'\in  W^{1,p}(B_{3})$ is a weak solution of
\begin{equation*}
\left \{
\begin{array}{lcll}
\div \Big[\frac{\ba'(x,\lambda\theta' v',\lambda \nabla v')}{\lambda^{p-1}}\Big] &=&  0 \quad &\text{in}\quad B_{3}, \\
v' & =& u'\quad &\text{on}\quad \partial B_{3}
\end{array}\right.
\end{equation*}
satisfying  $\|v'\|_{L^\infty(B_3)}\leq \frac{M_0}{\lambda \theta'}$.
Notice that the existence of such weak solution $v'$ to the above  Dirichlet problem  is guaranteed  by Remark~\ref{existen
ce+uniqueness}.  Now let $v(x) := r v'(x/r)$ for $x\in B_{3 r}$. Then  
by changing variables, we obtain the desired estimate \eqref{gradients-close} from \eqref{gradient-compare-universal-ball}. 

It remains to show \eqref{property-for-v}. Define $\bar{v}(y) = \lambda \theta' v'(y/\theta')$.  Then  $\|\bar v\|_{L^\infty(B_{3 \theta'})}\leq M_0$ and  $\bar{v}$ is a weak solution of
\begin{equation*}
\div \ba'(y,\bar v,\nabla \bar v) =  0 \quad\mbox{in}\quad B_{3 \theta'}.
\end{equation*}
Since $0<\theta'\leq 1$ and $\ba'\in \G_{B_3}( \eta)$, the  assumption ({\bf H3}) about  interior $W^{1,\infty}$-estimates gives
\begin{align*}
\|\nabla  \bar v\|_{L^\infty(B_{\frac{3\theta'}{2}})}^p\leq C(p,n,\eta, \Lambda, \K, M_0) \fint_{B_{2\theta'}}{|\nabla \bar{v}|^p \, dx}.
\end{align*}
This yields \eqref{property-for-v} owing to $\bar v(y) = \lambda\theta  v(y/\theta)$ and $\theta'/ \theta=r$.

\end{proof}

\begin{remark}\label{rm:translation-invariant}
Since the class of our equations is invariant under the transformation $x\mapsto x+y$, Lemma~\ref{lm:localized-compare-gradient} still holds true if $B_r$ is replaced by $B_r(y)$.
\end{remark}

\section{Density and gradient estimates}\label{interior-density-gradient}
We will derive interior $W^{1,q}$-estimates for solution $u$ of \eqref{ME} by estimating the distribution functions of the maximal function of $|\nabla  u|^p$. The precise maximal operators will be used are:
\begin{definition} The Hardy--Littlewood maximal function of a  function 
$f\in L^1_{loc}(\R^n)$ is defined by
\[
 (\M f)(x) = \sup_{\rho>0}\fint_{B_\rho(x)}{|f(y)|\, dy}.
\] 
In case $U$ is a region in $\R^n$ and $f\in L^1(U)$, then we   denote
$ \M_U f = \M (\chi_U f)$.
 \end{definition}
  
The next result gives a density estimate for the distribution  of $\M_{B_5}(|\nabla  u|^p)$. It roughly says that  if the maximal function $\M_{B_5}(|\nabla  u|^p)$ is bounded at one point in $B_r(y)$ then this property can be propagated for all points in $B_r(y)$ except on a set of small measure. 
\begin{lemma}\label{initial-density-est}
Assume that   $\A$ satisfies \eqref{structural-reference-1}--\eqref{structural-reference-3},  $\F\in L^{\frac{p}{p-1}}(B_6;\R^n)$, and $M_0>0$.  There exists a constant $N>1$ depending only on $p$, $n$, $\eta$,  $\Lambda$, $\K$ and $M_0$  such that   
for  any $\e>0$, we can find   $\delta=\delta(\e,\Lambda, p, \eta,n, \K, M_0)>0$  satisfying:  if $\lambda>0$, $0<\theta\leq 1$,
\begin{equation*}\label{interior-SMO}
\sup_{0<\rho\leq 3}\sup_{y\in B_1} \dist \Big(\A, \G_{B_\rho(y)}(\eta)\Big)< \delta,
\end{equation*}
then
% for any $\A\in \calA_{\delta, 4}
  for any weak solution $u$ of \eqref{ME} with
 $\|u\|_{L^\infty(B_5)}\leq \frac{M_0}{\lambda \theta}$,
and for any $y\in B_1$, $0<r\leq 1$ with
\begin{equation}\label{one-point-condition}
 B_r(y)\cap B_1\cap \big\{B_5:\, \M_{B_5}(|\nabla  u|^p)\leq 1 \big\}\cap \{ B_5: \M_{B_5}(|\F|^{\frac{p}{p-1}} )\leq \delta\}\neq \emptyset,
\end{equation}
we have
\begin{equation*}
 \big| \{ B_1:\, \M_{B_5}(|\nabla  u|^p)> N \}\cap B_r(y)\big|
\leq \e  |B_r(y)|.
\end{equation*}
\end{lemma}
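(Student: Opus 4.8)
The plan is to run the standard Calder\'on--Zygmund-type density argument built on the approximation result (Lemma~\ref{lm:localized-compare-gradient}) and the interior $W^{1,\infty}$-estimate for the good reference solution. First I would fix $\e>0$ and argue by setting $N := \max\{C_1, 2^{n+1}\omega_n^{-1}\}$ or rather $N$ large depending only on the geometric constant $C=C(p,n,\eta,\Lambda,\K,M_0)$ coming from \eqref{property-for-v}; the precise value will be pinned down at the end so that a sublevel set of a maximal function of a bounded gradient has measure at most $\frac\e2|B_r(y)|$. Then, given $y\in B_1$ and $0<r\le 1$ satisfying \eqref{one-point-condition}, I would pick a point $x_0\in B_r(y)\cap B_1$ with $\M_{B_5}(|\nabla u|^p)(x_0)\le 1$ and $\M_{B_5}(|\F|^{p/(p-1)})(x_0)\le\delta$. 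The first observation is the routine maximal-function splitting: for any $x\in B_r(y)$ and any ball $B_\rho(x)$, if $\rho\le r$ then $B_\rho(x)\subset B_{2r}(y)$ and if $\rho>r$ then $B_\rho(x)\subset B_{3\rho}(x_0)$, so that
\begin{equation*}
\M_{B_5}(|\nabla u|^p)(x)\le\max\Big\{\M_{B_{4r}(y)}(|\nabla u|^p)(x),\ 3^n\Big\},\qquad x\in B_r(y),
\end{equation*}
and similarly $\M_{B_5}(|\F|^{p/(p-1)})(x)\le\max\{\M_{B_{4r}(y)}(|\F|^{p/(p-1)})(x),3^n\delta\}$. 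Consequently, after noting $\fint_{B_{4r}(y)}|\nabla u|^p\le 1$ and $\fint_{B_{4r}(y)}|\F|^{p/(p-1)}\le\delta$ (both from the point $x_0$), I have reduced matters to estimating $|\{x\in B_r(y):\ \M_{B_{4r}(y)}(|\nabla u|^p)(x)>N\}|$, provided $N\ge 3^n$.

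\textbf{The comparison step.} Next I would invoke Lemma~\ref{lm:localized-compare-gradient} (in the translated form of Remark~\ref{rm:translation-invariant}): after verifying its hypotheses --- the distance condition on $\A$ is exactly the assumed $\sup_{0<\rho\le3}\sup_{y\in B_1}\dist(\A,\G_{B_\rho(y)}(\eta))<\delta$, which controls $\dist(\A,\G_{B_{3r}(y)})$; the normalization $\fint_{B_{4r}(y)}|\nabla u|^p\le1$ holds; $\fint_{B_{4r}(y)}|\F|^{p/(p-1)}\le\delta$ holds; and $\|u\|_{L^\infty(B_5)}\le M_0/(\lambda\theta)$ gives $\|u\|_{L^\infty(B_{4r}(y))}\le M_0/(\lambda\theta)$ since $B_{4r}(y)\subset B_5$ --- I obtain a function $v\in W^{1,p}(B_{3r}(y))$ with
\begin{equation*}
\fint_{B_{2r}(y)}|\nabla u-\nabla v|^p\,dx\le\e_1^p,\qquad \|\nabla v\|_{L^\infty(B_{3r/2}(y))}^p\le C\fint_{B_{2r}(y)}|\nabla v|^p\,dx,
\end{equation*}
where $\e_1>0$ will be chosen small (depending on $\e,N$) at the end. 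From the triangle inequality and $\fint_{B_{4r}(y)}|\nabla u|^p\le1$ one gets $\fint_{B_{2r}(y)}|\nabla v|^p\le 2^{p-1}(2^n\e_1^p+2^n)\le C_0$, hence $\|\nabla v\|_{L^\infty(B_{3r/2}(y))}^p\le CC_0=:C_2$, a constant depending only on $p,n,\eta,\Lambda,\K,M_0$.

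\textbf{Splitting the superlevel set and closing the argument.} Now I would decompose the bad set using $|\nabla u|^p\le 2^{p-1}(|\nabla u-\nabla v|^p+|\nabla v|^p)$. Choose $N:=\max\{3^n,\ 2^{p}C_2\}+1$. Then for $x\in B_r(y)$, if $\M_{B_{4r}(y)}(|\nabla u|^p)(x)>N$, a standard estimate of the maximal function of a sum shows that either $\M_{B_{3r/2}(y)}(|\nabla u-\nabla v|^p\chi_{B_{3r/2}(y)})(x)$ is large, or the ball over which the average of $|\nabla u|^p$ exceeds $N$ must stick out of $B_{3r/2}(y)$; the latter is impossible for $x\in B_r(y)$ because such a ball would then have radius $\ge r/2$ hence be contained in $B_{4r}(y)$, forcing an average of $|\nabla u|^p$ over a ball of measure $\gtrsim|B_{4r}(y)|$ to exceed $N\gg 1\ge\fint_{B_{4r}(y)}|\nabla u|^p$ --- a contradiction once $N$ is large. (I would carry out this ``truncation of the maximal function'' carefully: replace $v$ by $\tilde v$ equal to $v$ on $B_{3r/2}(y)$ and $u$ outside, so that $\nabla u-\nabla\tilde v$ is supported in $B_{3r/2}(y)$ and still $\|\nabla u-\nabla\tilde v\|_{L^p(B_{3r/2}(y))}^p\le\e_1^p|B_{2r}(y)|$, and $|\nabla\tilde v|\le\max\{|\nabla v|,|\nabla u|\}$.) This reduces the bad set to
\begin{equation*}
\big|\{x\in B_r(y):\ \M(|\nabla u-\nabla\tilde v|^p)(x)>c\}\big|,
\end{equation*}
to which the weak-$(1,1)$ bound for $\M$ applies, giving $\le\frac{C}{c}\e_1^p|B_{2r}(y)|\le\frac{C'2^n}{c}\e_1^p|B_r(y)|$. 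Finally, choosing $\e_1=\e_1(\e,N,p,n)$ small enough makes this $\le\e|B_r(y)|$, and the associated $\delta$ from Lemma~\ref{lm:localized-compare-gradient} is the desired $\delta(\e,\Lambda,p,\eta,n,\K,M_0)$. The main obstacle is the bookkeeping in the maximal-function truncation --- ensuring that ``large $\M_{B_5}(|\nabla u|^p)$ at $x\in B_r(y)$'' genuinely localizes to ``large $\M$ of $|\nabla u-\nabla\tilde v|^p$'', which requires choosing $N$ after $C_2$ but before $\e_1$, and keeping track that the reference bound \eqref{property-for-v} is on $B_{3r/2}(y)$ while we test on $B_r(y)$; everything else is routine.
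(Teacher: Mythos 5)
Your proof is correct and follows essentially the same route as the paper's: pick the point $x_0$ from the nonempty intersection to get averaged bounds over $B_{4r}(y)$, invoke Lemma~\ref{lm:localized-compare-gradient} (with Remark~\ref{rm:translation-invariant}) to produce the Lipschitz comparison function $v$, show that the superlevel set $\{\M_{B_5}(|\nabla u|^p)>N\}$ is contained in a fixed-level superlevel set of the localized maximal function of $|\nabla u-\nabla v|^p$ by the same two-case split on the radius $\rho$, and close with the weak $(1,1)$ estimate and a small choice of the comparison parameter. The only cosmetic differences are your gluing device $\tilde v$ (unneeded, and potentially not $W^{1,p}$ across $\partial B_{3r/2}(y)$ -- the paper simply uses $\M_{B_{2r}(y)}(|\nabla u-\nabla v|^p)$, as you also note as an alternative) and the exact radii and constants (e.g.\ the average $\fint_{B_{4r}(y)}|\nabla u|^p$ is bounded by $(5/4)^n$ rather than $1$, requiring the same harmless normalization the paper itself glosses over).
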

\begin{proof}
By condition \eqref{one-point-condition}, there exists a point $x_0\in B_r(y)\cap  B_1$ such that
\begin{align}
  \M_{B_5}(|\nabla  u|^p)(x_0)\leq 1\quad \mbox{and}\quad \M_{B_5}(|\F|^{\frac{p}{p-1}})(x_0) \leq
\delta.\label{maximal-fns-control}
\end{align}
Since $B_{4 r}(y) \subset B_{5 r}(x_0) \cap B_5$, it follows from \eqref{maximal-fns-control} that
\begin{align*}
&\fint_{B_{4 r}(y)}|\nabla  u|^p \, dx  \leq \frac{|B_{5 r}(x_0)|}{|B_{4 r}(y)|} \frac{1}{|B_{5 r}(x_0)|}\int_{B_{5 r}(x_0)\cap B_5}|\nabla  u|^p \, dx \leq \Big(\frac{5}{4}\Big)^{n},\\
 &\fint_{B_{4 r}(y)} |\F|^{\frac{p}{p-1}} \, dx  \leq \frac{|B_{5 r}(x_0)|}{|B_{4 r}(y)|} \frac{1}{|B_{5 r}(x_0)|}\int_{B_{5 r}(x_0)\cap B_5} |\F|^{\frac{p}{p-1}} \, dx \leq \Big(\frac{5}{4}\Big)^{n}\delta.
\end{align*}
Therefore, we can use Lemma~\ref{lm:localized-compare-gradient} and Remark~\ref{rm:translation-invariant} to obtain
\begin{equation}\label{gradient-comparison}
\fint_{B_{2r}(y)}{|\nabla  u- \nabla  v|^p\, dx }\leq \gamma^p,
\end{equation}
where $v \in W^{1,p}(B_{3r}(y))$ is some function satisfying
\begin{equation}\label{translated-v-property}
\|\nabla  v\|_{L^\infty(B_{\frac{3 r}{2}}(y))}^p
\leq C(p,n,\eta,\Lambda,\K, M_0 ) \fint_{B_{2r}(y)}{|\nabla  v|^p \, dx}.
\end{equation}
Here  $\delta=\delta(\gamma,\Lambda, p, \eta,n, \K, M_0)>0$   with $\gamma\in (0,1)$ being determined later. By using \eqref{translated-v-property}
 together with \eqref{gradient-comparison} and \eqref{maximal-fns-control}, we get
\begin{align}\label{gradient-is-bounded}
\|\nabla  v\|_{L^\infty(B_{\frac{3 r}{2}}(y))}^p
\leq  2^{p-1} C \left( \fint_{B_{2r}(y)}{|\nabla  u -\nabla  v|^p \, dx}
+ \fint_{B_{2r}(y)}{|\nabla  u|^p \, dx}\right)
\leq C_* (\gamma^p +1),
\end{align}
where $C_*=C_*(p,n,\eta,\Lambda,\K, M_0)$.
We claim that \eqref{maximal-fns-control}, \eqref{gradient-comparison} and \eqref{gradient-is-bounded}
yield
\begin{equation}\label{set-relation-claim}
 \big\{ B_r(y):  \M_{B_{2r}(y)}(|\nabla  u - \nabla  v|^p) \leq C_* \big\}\subset \big\{ B_r(y):\, \M_{B_5}(|\nabla  u |^p) \leq N\big\}
\end{equation}
with $N := \max{\{2^{p+1} C_*, 5^{n}\}}$. Indeed, let $x$ be a point in the set on the left hand side of \eqref{set-relation-claim}, and consider 
$B_\rho(x)$. If $\rho\leq r/2$, then $B_\rho(x) \subset B_{3 r/2}(y)\subset B_3$
and hence
\begin{align*}
 \frac{1}{|B_\rho(x)|}\int_{B_\rho(x) \cap B_5} |\nabla  u|^p \, dx
 &\leq 
 \frac{2^{p-1}}{|B_\rho(x)|}\Big[\int_{B_\rho(x) \cap B_5} |\nabla  u-\nabla  v|^p \, dx
 +\int_{B_\rho(x) \cap B_5} |\nabla  v|^p\, dx \Big]\\
 &\leq 2^{p-1} \Big[\M_{B_{2r}(y)}(|\nabla  u - \nabla  v|^p)(x) + \|\nabla  v \|_{L^\infty(B_{\frac{3r}{2}}(y))}^p\Big]\\
 &\leq 2^{p-1} C_* \big( \gamma^p + 2\big)\leq 2^{p+1} C_*.
\end{align*}
On the other hand if $\rho>r/2$, then  $B_\rho(x)\subset B_{5\rho}(x_0)$. This and  the first inequality in \eqref{maximal-fns-control} imply that
\begin{align*}
 \frac{1}{|B_\rho(x)|}\int_{B_\rho(x) \cap B_5} |\nabla  u|^p \, dx
 \leq \frac{5^{n}}{|B_{5\rho}(x_0)|} \int_{B_{5\rho}(x_0) \cap B_5} |\nabla  u|^p \, dx\leq 
 5^{n}.
\end{align*}
Therefore, $\M_{B_5}(|\nabla  u |^p)(x) \leq N$ and the claim \eqref{set-relation-claim} is proved.
Note that   \eqref{set-relation-claim} is equivalent to
 \begin{align*}
 \big\{B_r(y):\, \M_{B_5}(|\nabla  u |^p) > N\big\} \subset
 \big\{ B_r(y):\, \M_{B_{2r}(y)}(|\nabla  u - \nabla  v|^p) >C_* \big\}.
 \end{align*}
It follows from this,  the weak type $1-1$ estimate and \eqref{gradient-comparison}   that
\begin{align*}
 &\big|\big\{B_r(y):\, \M_{B_5}(|\nabla  u |^p) > N\big\} \big|\leq  \big|
 \big\{ B_r(y):\, \M_{B_{2r}(y)}(|\nabla  u - \nabla  v|^p) > C_* \big\}\big|\\
 &\leq C  \int_{B_{2r}(y)}{|\nabla  u - \nabla  v|^p \, dx }\leq C' \gamma^p \, |B_r(y)|,
 \end{align*}
 where $C'>0$ depends only on $p$, $n$, $\eta$, $\Lambda$, $\K$ and $M_0$.
By choosing $\gamma = \sqrt[p]{\frac{\e}{C'}}$, we obtain the desired result.
\end{proof}

In view of Lemma~\ref{initial-density-est}, we can apply the variation of the Vitali covering lemma given by \cite[Theorem~3]{W} (see also \cite[Lemma~1.2]{CP}) 
for \[
C=\{ B_1:\, \M_{B_5}(|\nabla   u|^p)> N \} \quad\mbox{and}\quad  
D=\{ B_1:\, \M_{B_5}(|\nabla  u|^p)> 1 \}\cup \{ B_1:\, \M_{B_5}(|\F|^{\frac{p}{p-1}})> \delta \}
\]
 to obtain:
\begin{lemma}\label{second-density-est}
Assume that   $\A$ satisfies \eqref{structural-reference-1}--\eqref{structural-reference-3}, $\F\in L^{\frac{p}{p-1}}(B_6;\R^n)$, and $M_0>0$. There exists a constant $N>1$ depending only on $p$, $n$, $\eta$, $\Lambda$, $\K$ and $M_0$  such that   
for  any $\e>0$, we can find   $\delta=\delta(\e,\Lambda, p, \eta, n, \K, M_0)>0$  satisfying:  if $\lambda>0$, $0<\theta\leq 1$,
\begin{equation*}
\sup_{0<\rho\leq 3}\sup_{y\in B_1} \dist \Big(\A, \G_{B_\rho(y)}(\eta)\Big)< \delta,
\end{equation*}
then   for any weak solution $u\in W_{loc}^{1,p}(B_6)$ of \eqref{ME} satisfying
\begin{equation*}
 \|u\|_{L^\infty(B_5)}\leq \frac{M_0}{\lambda \theta} \quad \mbox{ and }\quad
\big| \{B_1: 
\M_{B_5}(|\nabla  u|^p)> N \}\big| \leq \e |B_1|,
\end{equation*}
we have
\begin{align*}
\big|\{B_1: \M_{B_5}(|\nabla u|^p)> N\}\big|
\leq 20^n \e \, \Big(
\big|\{B_1: \M_{B_5}(|\nabla u|^p)> 1\}\big|
+ \big|\{ B_1: \M_{B_5}(|\F|^{\frac{p}{p-1}})> \delta \}\big|\Big).\nonumber
\end{align*}
\end{lemma}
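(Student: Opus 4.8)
The plan is to deduce this from the single-scale density estimate of Lemma~\ref{initial-density-est} by means of the Vitali-type covering lemma of \cite[Theorem~3]{W} (equivalently \cite[Lemma~1.2]{CP}). Fix $\e>0$ and let $N=N(p,n,\eta,\Lambda,\K,M_0)>1$ together with $\delta=\delta(\e,\Lambda,p,\eta,n,\K,M_0)>0$ be the constants furnished by Lemma~\ref{initial-density-est} for this value of $\e$. Put
\[
C := \{B_1:\ \M_{B_5}(|\nabla u|^p)>N\},\qquad
D := \{B_1:\ \M_{B_5}(|\nabla u|^p)>1\}\cup\{B_1:\ \M_{B_5}(|\F|^{\frac{p}{p-1}})>\delta\}.
\]
Because $N>1$ we have $C\subset D\subset B_1$, and the standing hypothesis gives $|C|\le\e\,|B_1|$.

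First I would verify the geometric hypothesis of the covering lemma: for every $y\in B_1$ and every $0<r\le1$,
\[
|C\cap B_r(y)|>\e\,|B_r(y)|\ \Longrightarrow\ B_r(y)\cap B_1\subset D.
\]
Argue by contraposition. If $B_r(y)\cap B_1\not\subset D$, choose $x_0\in\bigl(B_r(y)\cap B_1\bigr)\setminus D$; then $\M_{B_5}(|\nabla u|^p)(x_0)\le1$ and $\M_{B_5}(|\F|^{\frac{p}{p-1}})(x_0)\le\delta$, so $x_0$ lies in the intersection appearing in \eqref{one-point-condition} and that set is nonempty. Lemma~\ref{initial-density-est} then applies — all of its hypotheses, including the smallness of $\dist(\A,\G_{B_\rho(y)}(\eta))$, being part of the present assumptions — and yields $|C\cap B_r(y)|=\bigl|\{B_1:\M_{B_5}(|\nabla u|^p)>N\}\cap B_r(y)\bigr|\le\e\,|B_r(y)|$, which is exactly the desired contrapositive.

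Having checked $C\subset D\subset B_1$, $|C|\le\e\,|B_1|$, and the above implication, the covering lemma gives $|C|\le20^n\,\e\,|D|$; combining with $|D|\le|\{B_1:\M_{B_5}(|\nabla u|^p)>1\}|+|\{B_1:\M_{B_5}(|\F|^{\frac{p}{p-1}})>\delta\}|$ (subadditivity of Lebesgue measure) produces precisely the asserted inequality. This is the standard Calder\'on--Zygmund iteration and involves no real obstacle; the only point requiring attention is the bookkeeping — one must invoke Lemma~\ref{initial-density-est} with the \emph{same} $\e$ so that $N$ and $\delta$ agree throughout, and must recognize that hypothesis \eqref{one-point-condition} of that lemma is exactly the negation of $B_r(y)\cap B_1\subset D$ (should the covering lemma be stated with the strict inequality $|C|<\e|B_1|$, one replaces $\e$ by a marginally larger value and passes to the limit).
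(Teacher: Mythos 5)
Your proposal is correct and follows essentially the same route as the paper: the authors simply invoke the Vitali-type covering lemma of \cite[Theorem~3]{W} (or \cite[Lemma~1.2]{CP}) with exactly your sets $C$ and $D$, the hypothesis $|C\cap B_r(y)|>\e|B_r(y)|\Rightarrow B_r(y)\cap B_1\subset D$ being verified by contraposition via Lemma~\ref{initial-density-est} just as you describe. Your write-up merely makes explicit the bookkeeping that the paper leaves implicit.
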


\subsection{Interior gradient estimates in Lebesgue spaces}\label{sec:Lebesgue-Spaces}

We are now ready to prove  Theorem~\ref{main-result}.

\begin{proof}[\textbf{Proof of Theorem~\ref{main-result}}]
Let  $N>1$ be as in  Lemma~\ref{second-density-est}, and let $q_1=q/p>1$. We choose  $\e=\e(p, q, n,\eta,\Lambda, \K, M_0 )>0$ be such that
\[
\e_1 \eqdef 20^n \e = \frac{1}{2 N^{q_1}}, 
\]
and let $\delta=\delta(p,q,n ,\Lambda, \eta, \K, M_0)$  be the corresponding constant given by Lemma~\ref{second-density-est}.

Assuming for a moment that $u$  satisfies 
\begin{equation}\label{initial-distribution-condition}
\big| \{B_1: 
\M_{B_5}(|\nabla u|^p)> N \}\big| \leq \e |B_1|.
\end{equation}
Then it follows from  Lemma~\ref{second-density-est} that
\beq\label{initial-distribution-est}
\big|\{B_1: \M_{B_5}(|\nabla u|^p)> N\}\big|
\leq \e_1  \left(
\big|\{B_1: \M_{B_5}(|\nabla u|^p)> 1\}\big|
+ \big|\{ B_1: \M_{B_5}(|\F |^{\frac{p}{p-1}})> \delta \}\big|\right).
\eeq
 Let us iterate this estimate by considering
\[
u_1(x) = \frac{u(x)}{N^{\frac1p}}, \quad \F_1(x) = \frac{\F(x)}{N^{\frac{p-1}{p}}}\quad \mbox{and}\quad \lambda_1 = N^{\frac1p}\lambda.
\]
It is clear that $\|u_1\|_{L^\infty(B_5)}\leq \frac{M_0}{\lambda_1 \theta}$ and  $u_1\in W_{loc}^{1,p}(B_6)$    is a weak solution of
\begin{equation*}
\div \Big[\frac{\A(x,\lambda_1 \theta u_1, \lambda_1 \nabla u_1)}{\lambda_1^{p-1}}  \Big]=\div  \F_1  \quad\mbox{in}\quad B_6.
\end{equation*}
Moreover, thanks to \eqref{initial-distribution-condition} we have
\begin{align*}
\big| \{B_1: 
\M_{B_5}(|\nabla u_1|^p)> N \}\big| &= \big| \{B_1: 
\M_{B_5}(|\nabla u|^p)> N^2 \}\big| \leq \e |B_1|.
\end{align*}
Therefore, by applying  Lemma~\ref{second-density-est} to $u_1$ we obtain
\begin{align*}
\big|\{B_1: \M_{B_5}(|\nabla u_1|^p)> N\}\big|
&\leq \e_1 \left( 
\big|\{B_1: \M_{B_5}(|\nabla u_1|^p)> 1 \}\big|
+ \big|\{ B_1: \M_{B_5}(|\F_1 |^{\frac{p}{p-1}} )> \delta \}\big|\right)\\
&= \e_1  \left( 
\big|\{B_1: \M_{B_5}(|\nabla u|^p)> N \}\big|
+ \big|\{ B_1: \M_{B_5}(|\F|^{\frac{p}{p-1}})> \delta N\}\big| \right).
\end{align*}
We infer from this and  \eqref{initial-distribution-est} that
\begin{align}\label{first-iteration-est}
&\big|\{B_1: \M_{B_5}(|\nabla u|^p)> N^2\}\big|
\leq \e_1^2 
\big|\{B_1: \M_{B_5}(|\nabla u|^p)> 1 \}\big|\\
&\qquad + \e_1^2\big|\{ B_1: \M_{B_5}(|\F|^{\frac{p}{p-1}})> \delta\} \big|+ \e_1\big|\{ B_1: \M_{B_5}(|\F|^{\frac{p}{p-1}})> \delta N\}\big|. \nonumber
\end{align}
 Next, let
\[
u_2(x) = \frac{u(x)}{N^{\frac2p}}, \quad \F_2(x) = \frac{\F(x)}{N^{\frac{2(p-1)}{p}}} \quad \mbox{and}\quad \lambda_2 = N^{\frac2p}\lambda.
\]
Then  $u_2$  is a weak solution of
\begin{equation*}
\div \Big[\frac{\A(x,\lambda_2 \theta u_2, \lambda_2 \nabla u_2)}{\lambda_2^{p-1}}  \Big]=\div \F_2 \quad\mbox{in}\quad B_6
\end{equation*}
satisfying
\begin{align*}
  \|u_2\|_{L^\infty(B_5)}\leq \frac{M_0}{\lambda_2 \theta}\quad \mbox{and}\quad  
\big| \{B_1: 
\M_{B_5}(|\nabla u_2|^p)> N \}\big| &= \big| \{B_1: 
\M_{B_5}(|\nabla u|^p)> N^3 \}\big| \leq \e  |B_1|.
\end{align*}
Hence by applying  Lemma~\ref{second-density-est} to $u_2$ we get
\begin{align*}
\big|\{B_1: \M_{B_5}(|\nabla u_2|^p)> N\}\big|
&\leq \e_1 \left( 
\big|\{B_1: \M_{B_5}(|\nabla u_2|^p)> 1 \}\big|
+ \big|\{ B_1: \M_{B_5}(|\F_2|^{\frac{p}{p-1}} )> \delta\}\big|\right)\\
&= \e_1  \left( 
\big|\{B_1: \M_{B_5}(|\nabla u|^p)> N^2 \}\big|
+ \big|\{ B_1: \M_{B_5}(|\F|^{\frac{p}{p-1}})> \delta N^2\}\big| \right).
\end{align*}
This together with  \eqref{first-iteration-est} gives
\begin{align*}
\big|\{B_1: \M_{B_5}(|\nabla u|^p)> N^3\}\big|
\leq \e_1^3 
\big|\{B_1: \M_{B_5}(|\nabla u|^p)> 1 \}\big|
+ \sum_{i=1}^3\e_1^i\big|\{ B_1: \M_{B_5}(|\F|^{\frac{p}{p-1}})> \delta N^{3-i}\} \big|.
\end{align*}
By repeating the iteration, we then conclude that
\begin{align}\label{decay-distr}
\big|\{B_1: \M_{B_5}(|\nabla u|^p)> N^k\}\big|
&\leq \e_1^k 
\big|\{B_1: \M_{B_5}(|\nabla u|^p)> 1 \}\big|
+ \sum_{i=1}^k\e_1^i\big|\{ B_1: \M_{B_5}(|\F|^{\frac{p}{p-1}})> \delta N^{k-i}\} \big|
\end{align}
for all $k=1,2,\dots$ This together with  
\begin{align*}
&\int_{B_1}\M_{B_5}(|\nabla u|^p)^{q_1} \, dx  =q_1 \int_0^\infty t^{q_1-1} \big|\{B_1: \M_{B_5}(|\nabla u|^p)>t\}\big|\, dt\\
&=q_1 \int_0^{N} t^{q_1 -1} \big|\{B_1: \M_{B_5}(|\nabla u|^p)>t\}\big|\, dt
+q_1 \sum_{k=1}^\infty\int_{N^{k}}^{N^{k+1}} t^{q_1 -1} \big|\{B_1: \M_{B_5}(|\nabla u|^p)>t\}\big|\, dt\\
&\leq N^{q_1} |B_1| + (N^{q_1} -1) \sum_{k=1}^\infty N^{q_1 k}\big|\{B_1: \M_{B_5}(|\nabla u|^p)>N^k\}\big|
\end{align*}
gives
\begin{align*}
\int_{B_1}\M_{B_5}(|\nabla u|^p)^{q_1} \, dx  
&\leq  N^{q_1} |B_1| + (N^{q_1} -1)|B_1| \sum_{k=1}^\infty (\e_1 N^{q_1})^k\\
&\quad +\sum_{k=1}^\infty\sum_{i=1}^k (N^{q_1} -1)N^{q_1 k}\e_1^i\big|\{ B_1: \M_{B_5}(|\F|^{\frac{p}{p-1}})> \delta N^{k-i}\} \big|.
\end{align*}
But we have
\begin{align*}
&\sum_{k=1}^\infty\sum_{i=1}^k (N^{q_1} -1)N^{{q_1} k}\e_1^i\big|\{ B_1: \M_{B_5}(|\F|^{\frac{p}{p-1}})> \delta N^{k-i}\} \big|\\
&=\big(\frac{N}{\delta}\big)^{q_1}\sum_{i=1}^\infty (\e_1 N^{q_1})^i\left[\sum_{k=i}^\infty (N^{q_1} -1)\delta^{{q_1}}N^{{q_1} (k-i-1)}\big|\{ B_1: \M_{B_5}(|\F|^{\frac{p}{p-1}})>
\delta N^{k-i}\} \big|\right]\\
&=\big(\frac{N}{\delta}\big)^{q_1}\sum_{i=1}^\infty (\e_1 N^{q_1})^i\left[\sum_{j=0}^\infty (N^{q_1} -1)\delta^{{q_1}}N^{{q_1} (j-1)}\big|\{ B_1: \M_{B_5}(|\F|^{\frac{p}{p-1}})> \delta N^{j}\} \big|\right]\\
&\leq \big(\frac{N}{\delta}\big)^{q_1} \Big[\int_{B_1}\M_{B_5}(|\F|^{\frac{p}{p-1}})^{q_1} \, dx \Big]\sum_{i=1}^\infty (\e_1 N^{q_1})^i.
\end{align*}
Thus we infer that
\begin{align*}
\int_{B_1}\M_{B_5}(|\nabla u|^p)^{q_1} \, dx 
&\leq N^{q_1} |B_1| + \left[ (N^{q_1} -1)|B_1| +\big(\frac{N}{\delta}\big)^{q_1} \int_{B_1}\M_{B_5}(|\F|^{\frac{p}{p-1}})^{q_1} \, dx \right] \sum_{k=1}^\infty (\e_1 N^{q_1})^k\\
&= N^{q_1} |B_1| + \left[ (N^{q_1} -1)|B_1| +\big(\frac{N}{\delta}\big)^{q_1} \int_{B_1}\M_{B_5}(|\F|^{\frac{p}{p-1}})^{q_1} \, dx \right] \sum_{k=1}^\infty 2^{-k}\\
&\leq C\left( 1+ \int_{B_1}\M_{B_5}(|\F|^{\frac{p}{p-1}})^{q_1} \, dx  \right)
\end{align*}
with the constant $C$ depending only on $p$, $q$, $n$, $ \Lambda$,  $\eta$,  $\K$ and $M_0$. On the other hand, $
|\nabla u(x)|^p  \leq  
\M_{B_5}(|\nabla u|^p)(x)$
for almost every $x\in B_1$. Therefore,
it follows from the strong type ${q_1}-{q_1}$ estimate for the maximal function and the fact ${q_1}=q/p$ that 
\begin{equation}\label{initial-desired-L^p-estimate}
\int_{B_1}|\nabla u|^q \, dx \leq C\left( 1+ \int_{B_5}|\F|^{\frac{q}{p-1}} \, dx \right).
\end{equation}
We next remove the extra assumption  
\eqref{initial-distribution-condition} for $u$. Notice that for any $M>0$, by using the weak type $1-1$ estimate for the maximal function and Lemma~\ref{W^{1,p}-est}  we get
\begin{align*}
\big| \{B_1: 
\M_{B_5}(|\nabla u|^p)> N  M^p\}\big|
\leq \frac{C }{N M^p}\int_{B_5} |\nabla u|^p \, dx \leq \frac{C(p,n,\eta,\Lambda) }{M^p}\, \Big(\int_{B_6} |u|^p \, dx +\int_{B_6} |\F|^{\frac{p}{p-1}} \, dx \Big).
\end{align*}
Therefore, if we let
\[\bar{u}(x,t) = \frac{u(x,t)}{M}\quad \mbox{with}\quad M^p= \frac{C(p,n,\eta,\Lambda)\Big[  \|u\|_{L^p(B_6)}^p +\||\F|^{\frac{1}{p-1}}\|_{L^p(B_6)}^p \Big]}{ \e |B_1|}
\]
then 
$\big| \{B_1: 
\M_{B_5}(|\nabla \bar{u}|^p)> N\}\big|
\leq \e |B_1|$. Hence we can apply \eqref{initial-desired-L^p-estimate} to $\bar{u}$ with $\F$ and $\lambda$ being replaced by 
$\bar \F= \F/M^{p-1}$ and $\bar \lambda = \lambda M$. By reversing back to the  functions $u$ and  $\F$, we obtain the desired estimate  \eqref{main-estimate}.
\end{proof}

We next show that Theorem~\ref{simplest-main-result} is just a special case of Theorem~\ref{main-result}.

\begin{proof}[\textbf{Proof of Theorem~\ref{simplest-main-result}}]

For each $0<\rho\leq 3$ and $y\in B_1$, let
\[
a_{\rho,y}(z, \xi) := \fint_{B_3} \A(y + \frac{\rho}{3}x , z, \xi)\, dx=\A_{B_\rho(y)}(z,\xi).
\]
Then it is easy to see from the assumptions for  $\A$  that  $a_{\rho,y}$  satisfies \eqref{structural-reference-1}--\eqref{structural-reference-3} and  {(\bf{H1})}--{(\bf{H2})}  with $\eta:=\eta_0\equiv 0$. Moreover, $a_{\rho,y}$ also satisfies condition  {(\bf{H3})} thanks to \cite[Theorem~1.2]{HNP2}. These imply that
 $a_{\rho, y}\in \G_{B_3}(\eta_0)$.  Thus  $a_{\rho, y}\in \G_{B_\rho(y)}(\eta_0)$,  and  hence
\[
\dist \Big(\A, \G_{B_\rho(y)}(\eta_0)\Big) \leq 
\fint_{B_\rho(y)} \Big[
\sup_{z\in \overline\K}\sup_{\xi\neq 0}\frac{|\A(x,z,\xi) - \A_{B_\rho(y)}(z,\xi)|}{|\xi|^{p-1}}
\Big] \,dx \leq \delta.
\]
Since this holds for every $0<\rho\leq 3$ and $y\in B_1$, Theorem~\ref{simplest-main-result} follows from Theorem~\ref{main-result}.
\end{proof}

\subsection{Interior gradient estimates in Orlicz spaces}\label{subsec:Orlicz}
In this subsection we show that the interior estimates obtained in Theorem~\ref{main-result} still hold true in {\it Orlicz} spaces. This is achieved by the same arguments
as in Subsection~\ref{sec:Lebesgue-Spaces} which illustrates the robustness of our method. This subsection is motivated by \cite[Section 4]{BW}  where Byun and Wang derived similar estimates for the case $\A(x,z,\xi)$ is independent of $z$.

 Let us first recall 
some basic definitions and  properties about {\it Orlicz} spaces (see \cite{KK,RR}).
A function $\phi: [0,\infty) \to [0, \infty)$ is called a {\it Young function} if it is increasing, convex, and
\[
 \phi(0)=0, \quad \lim_{t\to 0^+}\frac{\phi(t)}{t}=0,  \quad \lim_{t\to \infty}\frac{\phi(t)}{t}=\infty.
\]
Given a {\it Young function} $\phi$ and a bounded domain $U\subset \R^n$, the {\it Orlicz} space $L^{\phi}(U)$ is defined to be the linear hull of $K^{\phi}(U)$ where
\[
 K^{\phi}(U) :=\Big\{g: U\to \R \mbox{ measurable } : \int_{U}{\phi(|g|) \, dx}<\infty \Big\}. 
\]
We will need the following well known conditions for $\phi$.
\begin{definition}
 Let $\phi$ be a {\it Young function}. 
 \begin{itemize}
  \item[(i)] $\phi$ is said to satisfy the $\bigtriangleup_2$-condition if there exists a constant $\mu>1$ such that $\phi(2t)\leq \mu \, \phi(t)$ for every $t\geq 0$. 
   \item[(ii)] $\phi$ is said to satisfy the $\bigtriangledown_2$-condition if there exists a constant $a>1$ such that $\phi(t)  \leq \frac{1}{2a} \phi(a t)$ for every $t\geq 0$. 
 \end{itemize}
We will write $\phi\in \bigtriangleup_2 \cap \bigtriangledown_2$ to mean that $\phi$ satisfies both $(i)$ and $(ii)$. Notice that $\phi\in  \bigtriangledown_2$ implies the quasiconvexity of $\phi$ (see \cite[Lemma~1.2.3]{KK}).
\end{definition}
The next elementary lemma gives a characterization of functions in   $L^{\phi}(U)$ in terms of their distribution functions.
\begin{lemma}\label{distri-int}
 Assume $\phi\in \bigtriangleup_2 \cap \bigtriangledown_2$, $U$ is a bounded domain, and $g:U\to \R$ is a nonnegative measurable function. Let $\nu>0$ and $\alpha>1$. Then
 \[
  g\in L^{\phi}(U)\iff S :=\sum_{j=1}^\infty{\phi(\alpha^j) \,\big|\{x\in U:\, g(x)>\nu \,\alpha^j \}\big|} <\infty.
 \]
Moreover, there exists $C=C(\nu, \alpha,\phi)>0$ such that
\[
 \frac{1}{C} S \leq \int_{U}{\phi(g)\, dx}\leq C (|U| + S).
\]
\end{lemma}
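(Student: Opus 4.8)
The plan is to prove the claimed equivalence and the two-sided estimate by a standard layer-cake (distribution-function) argument, exploiting that $\phi\in\bigtriangleup_2\cap\bigtriangledown_2$ makes $\phi$ comparable to its dyadic-type values along the geometric sequence $\{\alpha^j\}$. First I would record the two quantitative consequences of the hypotheses: the $\bigtriangleup_2$-condition gives a constant $\mu>1$ with $\phi(\alpha t)\le \mu^{N}\phi(t)$ for $t\ge 0$, where $N$ is any integer with $2^{N}\ge\alpha$, and more usefully $\phi(\alpha^{j+1})\le \mu^{N}\phi(\alpha^{j})$; the $\bigtriangledown_2$-condition gives $a>1$ with $\phi(t)\le\frac{1}{2a}\phi(at)$, hence iterating, $\phi(\alpha^{j})\le (2a)^{-m}\phi(\alpha^{j+m})$ once $a^{m}\ge\alpha$, so consecutive terms also grow geometrically from below, i.e. $\phi(\alpha^{j+1})\ge c\,\phi(\alpha^{j})$ with $c>1$. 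Both growth rates are independent of $j$. I will also use that $\phi$ is increasing and $\phi(0)=0$.

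Next I would set up the layer-cake formula. Writing $g_*(t):=|\{x\in U:\ g(x)>t\}|$ for the distribution function, one has
\[
\int_{U}\phi(g)\,dx=\int_{0}^{\infty}g_*(t)\,d\phi(t),
\]
and the plan is to compare the right-hand side with the series $S=\sum_{j\ge1}\phi(\alpha^{j})\,g_*(\nu\alpha^{j})$. Split $(0,\infty)=(0,\nu\alpha]\cup\bigcup_{j\ge1}(\nu\alpha^{j},\nu\alpha^{j+1}]$. On each dyadic-type ring $(\nu\alpha^{j},\nu\alpha^{j+1}]$ the monotone function $g_*$ is bounded between $g_*(\nu\alpha^{j+1})$ and $g_*(\nu\alpha^{j})$, while $\int_{\nu\alpha^{j}}^{\nu\alpha^{j+1}}d\phi=\phi(\nu\alpha^{j+1})-\phi(\nu\alpha^{j})$, and by the $\bigtriangleup_2/\bigtriangledown_2$ growth estimates this increment is comparable to $\phi(\nu\alpha^{j+1})$ and hence, absorbing $\nu$ into the constant via $\bigtriangleup_2$ applied finitely many times, comparable to $\phi(\alpha^{j+1})$ up to a constant $C=C(\nu,\alpha,\phi)$. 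Summing over $j\ge1$ yields $C^{-1}S\le\int_{U}\phi(g)\,dx$ after re-indexing (the term $\phi(\alpha^{1})g_*(\nu\alpha)$ is controlled by the first ring $(\nu,\nu\alpha]$ if one starts the splitting at $\nu$ rather than $\nu\alpha$, or simply absorbed). For the upper bound, the initial segment $\int_{0}^{\nu\alpha}g_*\,d\phi\le |U|\,\phi(\nu\alpha)\le C|U|$ since $g_*\le|U|$, and the tail $\sum_{j\ge1}\int_{\nu\alpha^{j}}^{\nu\alpha^{j+1}}g_*\,d\phi\le\sum_{j\ge1}g_*(\nu\alpha^{j})\bigl(\phi(\nu\alpha^{j+1})-\phi(\nu\alpha^{j})\bigr)\le C\sum_{j\ge1}\phi(\alpha^{j})g_*(\nu\alpha^{j})=CS$, using the $\bigtriangleup_2$-comparison of $\phi(\nu\alpha^{j+1})$ with $\phi(\alpha^{j})$ once more. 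Together these give $\int_{U}\phi(g)\,dx\le C(|U|+S)$, which is the asserted two-sided bound; the equivalence $g\in L^{\phi}(U)\iff S<\infty$ is then immediate because $g\in L^{\phi}(U)$ means $\int_{U}\phi(tg)\,dx<\infty$ for some $t>0$, and by $\bigtriangleup_2$ this is the same as $\int_{U}\phi(g)\,dx<\infty$, which by the estimate is equivalent to $S<\infty$.

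The only genuinely delicate point is keeping all constants uniform in $j$: this is exactly where both the $\bigtriangleup_2$ and $\bigtriangledown_2$ conditions are needed (one for the upper comparison $\phi(\alpha^{j+1})\le C\phi(\alpha^{j})$, the other to see that the increment $\phi(\alpha^{j+1})-\phi(\alpha^{j})$ is not much smaller than $\phi(\alpha^{j+1})$, i.e. $\phi(\alpha^{j})\le(1-c^{-1})\phi(\alpha^{j+1})$ for a fixed $c>1$). I would therefore isolate, as a preliminary sublemma, the statement that under $\bigtriangleup_2\cap\bigtriangledown_2$ there are constants $1<c_1\le c_2<\infty$ depending only on $\alpha$ and $\phi$ with $c_1\phi(\alpha^{j})\le\phi(\alpha^{j+1})\le c_2\phi(\alpha^{j})$ for all $j\in\Z$, and also $\phi(\nu\alpha^{j})\le C(\nu,\alpha,\phi)\,\phi(\alpha^{j})$ and vice versa; once this is in hand the rest is bookkeeping with the layer-cake identity. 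No step beyond this uses anything special, so I expect the proof to be short, with the sublemma on geometric growth of $\phi$ along $\{\alpha^{j}\}$ being the substantive (if routine) core.
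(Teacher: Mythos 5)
Your proof is correct and takes essentially the same route as the paper, which simply invokes the layer-cake representation $\int_U\phi(|g|)\,dx=\int_0^\infty\big|\{x\in U:\,g(x)>\lambda\}\big|\,d\phi(\lambda)$ together with the identification $L^\phi(U)\equiv K^\phi(U)$ under $\bigtriangleup_2$; your write-up just supplies the dyadic bookkeeping along $\{\nu\alpha^j\}$ that the paper leaves implicit. One minor point: the lower comparison $\phi(\alpha^{j+1})\ge c\,\phi(\alpha^j)$ with $c=\alpha>1$ already follows from convexity and $\phi(0)=0$ (indeed $\phi(t)\le\frac1\alpha\phi(\alpha t)$), so the $\bigtriangledown_2$ iteration you sketch --- in which, incidentally, the condition $a^m\ge\alpha$ should read $a^m\le\alpha$ for the monotonicity step to apply --- is not actually needed at that point.
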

\begin{proof}
This follows from the representation  formula
\[\int_{U}{\phi(|g|) \, dx}
=\int_0^\infty{\big|\{x\in U:\, g(x)>\lambda \}\big|\, d\phi(\lambda)}
\]
and the fact $L^{\phi}(U)\equiv K^{\phi}(U)$ when  $\phi\in \bigtriangleup_2$.
\end{proof}

Now we state the version of Theorem~\ref{main-result} for {\it Orlicz} spaces.
\begin{theorem}\label{second-result} Assume that  $\A$ satisfies 
\eqref{structural-reference-1}--\eqref{structural-reference-3}, and $M_0>0$. 
For any $q>p$, there exists a constant $\delta=\delta(p,q,n ,\Lambda, \eta, \K, M_0)>0$  such that: if $\lambda>0$, $0<\theta \leq 1$,
\begin{equation*}
\sup_{0<\rho\leq 3}\sup_{y\in B_1} \dist \Big(\A, \G_{B_\rho(y)}(\eta)\Big) \leq \delta,
\end{equation*}
and $u\in W_{loc}^{1,p}(B_6)$  is a weak solution of \eqref{ME}
satisfying $\|u\|_{L^\infty(B_5)}\leq\frac{M_0}{\lambda \theta}$, then:
\begin{equation}\label{interior-Orlicz}
|\F|^{\frac{p}{p-1}}  \in L^\phi(B_5) \quad\Longrightarrow \quad |\nabla u|^p\in L^\phi(B_1).
\end{equation}
\end{theorem}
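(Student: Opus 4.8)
The plan is to run the proof of Theorem~\ref{main-result} from Subsection~\ref{sec:Lebesgue-Spaces} essentially unchanged up to the iterated distribution inequality \eqref{decay-distr}, and only at the very end trade the Lebesgue integration appearing there for the Orlicz characterization of Lemma~\ref{distri-int}. Concretely, let $N>1$ be the constant of Lemma~\ref{second-density-est}. Since $\phi\in\bigtriangleup_2$, iterating $\phi(2t)\le\mu\,\phi(t)$ yields a growth bound $\phi(N^k)\le C(\phi)\,N^{\beta k}$ for all $k\ge1$, with $\beta:=\log_2\mu$. I would then fix $\e=\e(p,n,\eta,\Lambda,\K,M_0,\phi)>0$ so small that, writing $\e_1:=20^n\e$ as in the proof of Theorem~\ref{main-result}, one has $\e_1N^{\beta}<1$, and let $\delta$ be the constant produced by Lemma~\ref{second-density-est} for this $\e$. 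Exactly as in Subsection~\ref{sec:Lebesgue-Spaces} --- first under the auxiliary smallness \eqref{initial-distribution-condition}, then removing it via the normalization $\bar u=u/M$ with $M^p$ comparable to $\big(\|u\|_{L^p(B_6)}^p+\||\F|^{\frac{1}{p-1}}\|_{L^p(B_6)}^p\big)/(\e|B_1|)$ (legitimate because membership in $L^\phi$ is invariant under multiplication by constants, so $|\bar{\F}|^{\frac{p}{p-1}}=|\F|^{\frac{p}{p-1}}/M^p$ still lies in $L^\phi(B_5)$) --- the iteration of Lemma~\ref{second-density-est} produces
\begin{equation*}
\big|\{B_1:\M_{B_5}(|\nabla u|^p)>N^k\}\big|\le\e_1^k\big|\{B_1:\M_{B_5}(|\nabla u|^p)>1\}\big|+\sum_{i=1}^k\e_1^i\big|\{B_1:\M_{B_5}(|\F|^{\frac{p}{p-1}})>\delta N^{k-i}\}\big|
\end{equation*}
for every $k\ge1$. (Local integrability of $|\nabla u|^p$ on $B_5$, needed for the maximal function and these distribution functions to be meaningful, follows from Lemma~\ref{W^{1,p}-est}.)

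With this inequality in hand, by Lemma~\ref{distri-int} applied to $g=\M_{B_5}(|\nabla u|^p)$ with $\alpha=N$ and $\nu=1$, it suffices to show
\[
S:=\sum_{k=1}^\infty\phi(N^k)\,\big|\{B_1:\M_{B_5}(|\nabla u|^p)>N^k\}\big|<\infty.
\]
Substituting \eqref{decay-distr} splits $S$ into a ``homogeneous'' part bounded by $|B_1|\sum_{k}\phi(N^k)\e_1^k\le C(\phi)\,|B_1|\sum_k(\e_1N^{\beta})^k<\infty$, and a double sum which, after interchanging the order of summation and setting $j=k-i$, equals $\sum_{i\ge1}\e_1^i\sum_{j\ge0}\phi(N^{i+j})\,\big|\{B_1:\M_{B_5}(|\F|^{\frac{p}{p-1}})>\delta N^{j}\}\big|$. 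Applying the $\bigtriangleup_2$-condition in the form $\phi(N^{i+j})\le C(\phi)\,N^{\beta i}\phi(N^{j})$, with the \emph{same} exponent $\beta$, and factoring, the double sum is dominated by
\[
C(\phi)\Big(\sum_{i=1}^\infty(\e_1N^{\beta})^i\Big)\Big(\sum_{j=0}^\infty\phi(N^{j})\,\big|\{B_1:\M_{B_5}(|\F|^{\frac{p}{p-1}})>\delta N^{j}\}\big|\Big).
\]
The first factor is finite by the choice of $\e$, and the second is finite by Lemma~\ref{distri-int} again (now with $\nu=\delta$) provided $\M_{B_5}(|\F|^{\frac{p}{p-1}})\in L^\phi(B_1)$.

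This last point is where the $\bigtriangledown_2$-hypothesis enters: for a Young function $\phi\in\bigtriangleup_2\cap\bigtriangledown_2$ the Hardy--Littlewood maximal operator is bounded on $L^\phi(\R^n)$ (see \cite{KK}), so from $|\F|^{\frac{p}{p-1}}\in L^\phi(B_5)$, whence $\chi_{B_5}|\F|^{\frac{p}{p-1}}\in L^\phi(\R^n)$, we get $\M_{B_5}(|\F|^{\frac{p}{p-1}})=\M\big(\chi_{B_5}|\F|^{\frac{p}{p-1}}\big)\in L^\phi(\R^n)\subset L^\phi(B_1)$. Hence $S<\infty$, so $\M_{B_5}(|\nabla u|^p)\in L^\phi(B_1)$ by Lemma~\ref{distri-int}; since $|\nabla u(x)|^p\le\M_{B_5}(|\nabla u|^p)(x)$ for a.e.\ $x\in B_1$ and $\phi$ is nondecreasing, we conclude $|\nabla u|^p\in L^\phi(B_1)$, and the normalization transfers this from $\bar u$ back to $u$ because $|\nabla u|^p=M^p|\nabla\bar u|^p$ and $L^\phi$ is a linear space.

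The only genuinely delicate part I anticipate is the bookkeeping in the double sum: one must check that the geometric factor extracted from the $\bigtriangleup_2$-condition is uniform in $k$ (it is, the exponent always being $\beta=\log_2\mu$) and that the series $\sum_i(\e_1N^{\beta})^i$ converges, which is precisely why $\e$, hence $\delta$, must be chosen small in terms of the $\bigtriangleup_2$-constant of $\phi$. Everything else --- the compactness and comparison machinery of Lemmas~\ref{lm:compare-solution}, \ref{cor:compare-gradient}, \ref{lm:localized-compare-gradient}, and the density estimates of Lemmas~\ref{initial-density-est} and \ref{second-density-est} --- is used verbatim with no modification, since those statements concern the equation and its level sets rather than the ambient function space.
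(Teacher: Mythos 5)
Your proposal is correct and follows essentially the same route as the paper: reuse the iterated distribution inequality \eqref{decay-distr} verbatim (including the normalization $\bar u = u/M$ to remove \eqref{initial-distribution-condition}), split the weighted sum $\sum_k\phi(N^k)|\{\M_{B_5}(|\nabla u|^p)>N^k\}|$ into a geometric part and a double sum, control the $\phi(N^{k})$ growth via the $\bigtriangleup_2$-condition, choose $\e$ (hence $\delta$) small against that growth factor, and close the argument with Lemma~\ref{distri-int} and the Orlicz-space boundedness of the maximal operator (Lemma~\ref{maximal-fn-est}). The only differences are cosmetic — you encode the $\bigtriangleup_2$ growth as $\phi(N^k)\le C N^{\beta k}$ with $\beta=\log_2\mu$ where the paper writes $\phi(N^k)\le\mu_1^k\phi(1)$ — and the bookkeeping in your double sum is handled identically to the paper's.
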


In order to prove Theorem~\ref{second-result}, we need one more  lemma concerning about strong type estimates for maximal functions in {\it Orlicz} spaces.
\begin{lemma}[ Theorem~1.2.1 in \cite{KK} ]\label{maximal-fn-est}
Assume $\phi\in \bigtriangleup_2 \cap \bigtriangledown_2$ and $g\in L^\phi(B_5)$. Then $\M_{B_5}(g)\in L^\phi(B_5)$ and 
\[
 \int_{B_5}{\phi\big(\M_{B_5}(|g|) \big)\, dx}\leq C(n,\phi) \int_{B_5}{\phi(|g|)\, dx}.
\]
\end{lemma}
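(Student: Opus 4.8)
The plan is to prove this by the classical route: run the Calder\'on--Zygmund splitting of $g$ at each height together with the weak type $(1,1)$ bound for $\M$, inside the layer--cake representation of $\int_{B_5}\phi(\M_{B_5}g)\,dx$. The two hypotheses on $\phi$ will enter only through quantitative growth inequalities, which I would extract first. Since $\phi$ is convex and finite it is locally Lipschitz, so $\phi(t)=\int_0^t\phi'(\lambda)\,d\lambda$ with $\phi'$ (the right derivative) nondecreasing, and convexity gives $\sigma\phi'(\sigma)\le\phi(2\sigma)-\phi(\sigma)$; the $\bigtriangleup_2$-condition then yields $\phi'(\sigma)\le\mu\,\phi(\sigma)/\sigma$ for all $\sigma>0$. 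Iterating the $\bigtriangledown_2$-inequality $\phi(\lambda)\le\frac1{2a}\phi(a\lambda)$ gives $\phi(a^{-k}t)\le(2a)^{-k}\phi(t)$, and a short interpolation over $k$ produces an exponent $p_0:=1+\log_a 2>1$ and a constant $c_0$ with $\phi(s)\le c_0(s/t)^{p_0}\phi(t)$ whenever $0<s\le t$. Finally $\bigtriangleup_2$ gives $L^\phi(B_5)=K^\phi(B_5)$, and superlinearity of $\phi$ at infinity gives $L^\phi(B_5)\subset L^1(B_5)$; in particular $\chi_{B_5}g\in L^1(\R^n)$, so the weak type $(1,1)$ estimate for $\M$ applies.

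Next I would prove the scalar bound $\int_0^{2t}\phi'(\lambda)\lambda^{-1}\,d\lambda\le C\,\phi(t)/t$ for all $t>0$. Decompose $(0,2t]=\bigcup_{k\ge0}(2^{-k}t,2^{-k+1}t]$; on the $k$-th piece $\phi'(\lambda)/\lambda\le\phi'(2^{-k+1}t)/(2^{-k}t)$ by monotonicity of $\phi'$, so that piece contributes at most $\phi'(2^{-k+1}t)$, which by the $\bigtriangleup_2$-bound is $\le C\,\phi(2^{-k+1}t)/(2^{-k+1}t)$. For $k\ge1$ the lower growth bound gives $\phi(2^{-k+1}t)\le c_0\,2^{(-k+1)p_0}\phi(t)$, so the contribution is $\le C\,2^{(-k+1)(p_0-1)}\phi(t)/t$; the single term $k=0$ is handled directly by $\bigtriangleup_2$. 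Summing the geometric series $\sum_k 2^{-k(p_0-1)}$, convergent precisely because $p_0>1$, gives the claim. This step is exactly where the $\bigtriangledown_2$-condition is indispensable: it is what makes $\phi'(\lambda)/\lambda$ integrable near the origin and hence $\M$ bounded on $L^\phi$.

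For the main estimate, fix $\lambda>0$ and write $\chi_{B_5}g=h_1+h_2$ with $h_1=\chi_{B_5}g\,\chi_{\{|g|>\lambda/2\}}$ and $h_2=\chi_{B_5}g\,\chi_{\{|g|\le\lambda/2\}}$, so that $\|\M h_2\|_{L^\infty}\le\lambda/2$ and hence $\{\M_{B_5}g>\lambda\}\subset\{\M h_1>\lambda/2\}$; the weak type $(1,1)$ inequality then gives $|\{\M_{B_5}g>\lambda\}|\le C_n\lambda^{-1}\int_{B_5\cap\{|g|>\lambda/2\}}|g|\,dx$. Inserting this into $\int_{B_5}\phi(\M_{B_5}g)\,dx=\int_0^\infty|\{\M_{B_5}g>\lambda\}|\,\phi'(\lambda)\,d\lambda$ and applying Tonelli's theorem turns the right-hand side into $C_n\int_{B_5}|g(x)|\big(\int_0^{2|g(x)|}\phi'(\lambda)\lambda^{-1}\,d\lambda\big)\,dx$; bounding the inner integral by $C\phi(|g(x)|)/|g(x)|$ via the scalar estimate with $t=|g(x)|$ yields $\int_{B_5}\phi(\M_{B_5}g)\,dx\le C(n,\phi)\int_{B_5}\phi(|g|)\,dx$, which in particular gives $\M_{B_5}g\in L^\phi(B_5)$. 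As an alternative bookkeeping one could instead run the same splitting along a geometric sequence of heights and invoke Lemma~\ref{distri-int} to pass from the resulting sum estimate to the integral estimate.

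I expect the only real work to be the first two steps --- converting the qualitative $\bigtriangleup_2$/$\bigtriangledown_2$ conditions into the quantitative exponent $p_0>1$ and the attendant integrability of $\phi'(\lambda)/\lambda$ near the origin. Once that scalar estimate is in hand, the Calder\'on--Zygmund decomposition together with the weak type $(1,1)$ bound for $\M$ is entirely standard.
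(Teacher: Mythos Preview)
The paper does not prove this lemma at all; it is simply quoted as Theorem~1.2.1 of \cite{KK} and used as a black box in the proof of Theorem~\ref{second-result}. So there is no ``paper's own proof'' to compare against.

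Your argument is correct and is essentially the classical proof of the strong type maximal inequality in Orlicz spaces. The two quantitative consequences you extract---$\phi'(\sigma)\le C\phi(\sigma)/\sigma$ from $\bigtriangleup_2$ and the lower power bound $\phi(s)\le c_0(s/t)^{p_0}\phi(t)$ for $s\le t$ with $p_0>1$ from $\bigtriangledown_2$---are exactly what is needed, and your dyadic computation of $\int_0^{2t}\phi'(\lambda)\lambda^{-1}\,d\lambda\le C\phi(t)/t$ is clean. The rest (level-set splitting, weak $(1,1)$, layer cake, Tonelli) is standard. One small cosmetic point: in bounding $\sigma\phi'(\sigma)$ you wrote $\phi(2\sigma)-\phi(\sigma)$ and then invoked $\bigtriangleup_2$; strictly this gives $\phi'(\sigma)\le(\mu-1)\phi(\sigma)/\sigma$ rather than $\mu\phi(\sigma)/\sigma$, but of course either constant suffices.
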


\begin{proof}[\textbf{Proof of Theorem~\ref{second-result}}]
Since the arguments are essentially the same as those given in Subsection~\ref{sec:Lebesgue-Spaces}, we only indicate the main points.

Let  $N>1$ be as in  Lemma~\ref{second-density-est}. As $\phi\in \bigtriangleup_2$, it is easy to see that there exists $\mu>1$ such that
\begin{equation}\label{Delta-conse}
 \phi(Nt)\leq \mu^{n_0} \phi(t)
=:\mu_1 \phi(t)\quad \forall t\geq 0,
 \end{equation}
 where $n_0\in \N$ depends only on $N$. Let us choose  $\e=\e(p, \phi, n,\eta,\Lambda, \K, M_0 )>0$ be such that
\[
\e_1 \eqdef 20^n \e = \frac{1}{2 \mu_1}, 
\]
and let $\delta=\delta(p,\phi,n ,\Lambda, \eta, \K, M_0)$  be the corresponding constant given by Lemma~\ref{second-density-est}. By considering the function $\bar u := u/M$ instead of $u$ as done at the end of the proof
of Theorem~\ref{main-result}, we can assume without loss of generality that condition
\eqref{initial-distribution-condition} is satisfied. Thus, we obtain estimate \eqref{decay-distr} and hence
\begin{align}\label{geometric-decay}
&\sum_{k=1}^\infty \phi(N^{k})\big|\{B_1: \M_{B_5}(|\nabla u|^p)>N^k\}\big|\\
&\leq |B_1| \sum_{k=1}^\infty \phi(N^k) \e_1^k
+\sum_{k=1}^\infty\sum_{i=1}^k \phi(N^{k})\e_1^i\big|\{ B_1: \M_{B_5}(|\F|^{\frac{p}{p-1}})> \delta N^{k-i}\} \big|=: S_1 + S_2.\nonumber
\end{align} 
It follows from  \eqref{Delta-conse} that $\phi(N^{k}) \leq \mu_1^k \phi(1)$ and $\phi(N^{k}) \leq \mu_1^{i-1}\phi(N^{k-i+1})$ for each $i=1,\dots, k$. Consequently,
\begin{equation}\label{S_1}
 S_1 \leq \phi(1)|B_1| \sum_{k=1}^\infty (\mu_1\e_1)^k =\phi(1)|B_1| \sum_{k=1}^\infty 2^{-k}=\phi(1)|B_1|
\end{equation}
and
\begin{align*}
 S_2&\leq\mu_1^{-1} \sum_{i=1}^\infty (\mu_1\e_1 )^i\left[\sum_{k=i}^\infty \phi(N^{k-i+1})\big|\{ B_1: \M_{B_5}(|\F|^{\frac{p}{p-1}})>
\delta N^{k-i}\} \big|\right]\\
&=\mu_1^{-1}\sum_{i=1}^\infty  (\mu_1\e_1 )^i\left[\sum_{j=1}^\infty \phi(N^j)\big|\{ B_1: \M_{B_5}(|\F|^{\frac{p}{p-1}})> \frac{\delta}{N} N^{j}\} \big|\right].
\end{align*}
Hence by using Lemma~\ref{distri-int} and Lemma~\ref{maximal-fn-est}, we obtain
\begin{align*}
 S_2 &\leq C \sum_{i=1}^\infty (\mu_1\e_1 )^i  \int_{B_1}\phi\Big(\M_{B_5}(|\F|^{\frac{p}{p-1}})\Big) \, dx 
\leq  C \int_{B_1}\phi\Big(|\F|^{\frac{p}{p-1}}\Big) \, dx.
\end{align*}
This together with \eqref{geometric-decay} and \eqref{S_1} yields
\begin{align*}
\sum_{k=1}^\infty \phi(N^{k})\big|\{B_1: \M_{B_5}(|\nabla u|^p)>N^k\}\big|
\leq C \Big[ 1+ \int_{B_1}\phi\Big(|\F|^{\frac{p}{p-1}}\Big) \, dx\Big].
\end{align*}
Therefore, we conclude from Lemma~\ref{distri-int} that $ \M_{B_5}(|\nabla u|^p) \in L^\phi(B_1)$ which gives \eqref{interior-Orlicz} as 
$|\nabla u(x)|^p\leq \M_{B_5}(|\nabla u|^p)(x)$ for a.e. $x\in B_1$.
\end{proof}

\textbf{Acknowledgement.} The authors would like to thank Luan Hoang for fruitful
discussions. T. Nguyen gratefully acknowledges 
the support by a grant  from the Simons Foundation (\# 318995).

  \end{document}